\newtheorem{proposition}{Proposition}[section]
\newtheorem{theorem}{Theorem}[section]
\newtheorem{definition}{Definition}[section]
\newtheorem{lemma}{Lemma}[section]
\newtheorem{remark}{Remark}[section]
\newcommand{\Rmnum}[1]{\expandafter\@slowromancap\romannumeral #1@}
\begin{document}

\title[]{Polynomial decay of the gap length for  $C^{k}$ quasi-periodic Schr\"{o}dinger operators and spectral application}

\author{Ao Cai}
\address{
	Chern Institute of Mathematics and LPMC, Nankai University, Tianjin 300071, China; and Departmento de Matem\'{a}tica and CMAFCIO, Faculdade de Ci\^{e}ncias, Universidade de Lisboa, Portugal.
}
\email{acai@fc.ul.pt; godcaiao@126.com}

\author {Xueyin Wang}
\address{
	Chern Institute of Mathematics and LPMC, Nankai University, Tianjin 300071, China.
}
\email{xueyinwang1995@163.com, 2889190407@qq.com}

\begin{abstract}
	
	For the quasi-periodic Schr\"{o}dinger operators in the local perturbative regime where the frequency is Diophantine and the potential is $C^k$ sufficiently small depending on the Diophantine constants, we prove that the length of the corresponding spectral gap has a polynomial decay upper bound with respect to its label. This is based on a refined quantitative reducibility theorem for $C^k$ quasi-periodic ${\rm SL}(2,\mathbb{R})$ cocycles, and also based on the Moser-P\"{o}schel argument for the related Schr\"{o}dinger cocycles. As an application, we are able to show the homogeneity of the spectrum.
		
\end{abstract}

\maketitle

\section{Introduction}
Consider the one dimensional discrete Schr\"{o}dinger operator defined on $\ell^{2}(\mathbb{Z})$:
\begin{equation*}
(H_{V,\alpha,\theta} u)_{n} = u_{n+1} + u_{n-1} + V(\theta+n\alpha)u_{n}, \ \ \forall \  n\in \mathbb{Z},
\end{equation*}
where $\theta\in \mathbb{T}^{d}=\mathbb{R}^{d}/\mathbb{Z}^{d}$ is called the phase, $\alpha\in \mathbb{T}^{d}$ is the  frequency, and $V:\mathbb{T}^{d}\rightarrow \mathbb{R}$ is called the potential.  The most important example is the almost Mathieu operator (AMO), which is defined as
\begin{equation*}
(H_{\lambda,\alpha,\theta}u)_{n} = u_{n+1}+u_{n-1}+2\lambda\cos2\pi(\theta+n\alpha)u_{n}, \ \ \forall \ n\in \mathbb{Z},
\end{equation*}
where $\lambda\in \mathbb{R}$ is called the coupling constant.

These operators have been widely studied due to their close relations to quasi-crystal and quantum Hall effect in physics, see \cite{Jito, Ds}. For Schr\"{o}dinger operators, people are always concerned with the topological structure of the spectrum and the property of the spectral measure. Note that there has been a lot of progress for the analytic potential $V\in C^{\omega}(\mathbb{T}^{d},\mathbb{R})$, see \cite{Av1, Av2, Le, Eli, Dam2}, however the results are relatively rare for the lower regularity case. In this paper, we focus on the  topological structure of the spectrum for Schr\"{o}dinger operators with $V\in C^{k}(\mathbb{T}^{d},\mathbb{R})$.

\subsection{Spectral gaps}
Denote by $\Sigma_{V,\alpha,\theta}$ the spectrum of $H_{V,\alpha,\theta}$, which is independent of $\theta$ if $(1, \alpha)$ is rationally independent.
It is well known that when $V$ is bounded, $H_{V,\alpha,\theta}$ is a bounded self-adjoint operator on $\ell^{2}(\mathbb{Z})$ and $\Sigma_{V,\alpha}\in \mathbb{R}$. Any bounded connected component of $\mathbb{R}\backslash \Sigma_{V,\alpha}$ is called a spectral gap.
By the Gap-Labelling Theorem \cite{De,JM}, each spectral gap can be labelled by a unique $m\in\mathbb{Z}^{d}$ such that $N_{V,\alpha}(E)=\langle m,\alpha \rangle \mod \mathbb{Z}$ (the  label should be $-m$ for the positive Laplacian case), where $N_{V,\alpha}(E)$ is the integrated density of states of $H_{V,\alpha,\theta}$.
Moreover, different gaps correspond to different labels.
Denote by $G_{m}(V)=(E_{m}^{-},E_{m}^{+})$ the gap with label $m$.
Recall that $\alpha$ is  Diophantine if $\alpha \in {\rm DC}_{d}(\gamma,\tau)$ with $\gamma>0, \tau>d$, where
\begin{equation*}
{\rm DC}_{d}(\gamma,\tau) =\left\{\alpha\in \mathbb{R}^{d}:\inf_{l\in \mathbb{Z}}|\langle m,\alpha \rangle-l|\geq  \frac{\gamma}{|m|^{\tau}}, \forall \ m\in\mathbb{Z}^{d}\backslash\{0\} \right\}.\footnote{${\rm DC}_{d} =\cup_{\gamma>0,\tau>d}{\rm DC}_{d}(\gamma,\tau)$ is of full Lebesgue measure. Actually, for any fixed $\tau>d$, $\cup_{\gamma>0}{\rm DC}_{d}(\gamma,\tau)$ is also of full Lebesgue measure.}
\end{equation*}

Seeds of the upsurge in studying the spectral structure had already been planted by D. Hofstadter in 1976, when he discovered the marvelous ``Hofstadter's butterfly" and gave a graphical representation of the spectrum of the AMO for $\lambda=1$ at different frequencies \cite{Hofs}.
Afterwards, Thouless-Kohmoto-Nightingale-Nijs \cite{Th} showed that the wings (namely gaps) of the butterfly are characterized by the Chern numbers (namely the labels \textquotedblleft $m$\textquotedblright \ defined above).
More recently, Simon \cite{Si} stated the conjecture, known as the famous \textquotedblleft Ten Martini Problem\textquotedblright \ after an offer by Mark Kac in 1981, that the spectrum of AMO is a Cantor set for all $\lambda\neq0$ and all $\alpha\in \mathbb{R}\backslash\mathbb{Q}$. The first contribution was made by Bellissard and Simon, in \cite{BS} they proved that for generic parameters $(\lambda,\alpha)$ the spectrum of AMO is a Cantor set through rational approximation.
Recent breakthrough belongs to Puig \cite{Puig} who proved that the Ten Martini Problem holds for all $\lambda\neq 0,\pm 1$ and $\alpha\in {\rm DC}_{1}$ via reducibility and Aubry duality, where $\alpha$ is of full Lebesgue measure.
Finally, Avila and Jitomirskaya \cite{Av1}  completely solved  this conjecture by several ingredients including Kotani theory and the analytic continuation techniques in the study of Wely's $m$-functions.
The \textquotedblleft Dry Ten Martini Problem\textquotedblright \ further conjectures that all the spectral gaps are open for all $\lambda \neq 0$ and all $\alpha\in \mathbb{R}\backslash\mathbb{Q}$.
Avila-Jitomirskaya \cite{Av1} proved that all the gaps of $H_{\lambda,\alpha,\theta}$ are open for $\beta(\alpha)>0$\footnote{Let $p_{n}/q_{n} \in \mathbb{Q}$ be the continued fraction approximants of $\alpha\in \mathbb{R}\backslash\mathbb{Q}$, then $\beta(\alpha) := \limsup_{n\rightarrow+\infty} \frac{\ln q_{n+1}}{q_{n}}$.} and $e^{-\beta(\alpha)}<|\lambda|<e^{\beta(\alpha)}$.
For $\beta(\alpha)<\infty$, Liu-Yuan \cite{LiuYuan} showed that $H_{\lambda,\alpha,\theta}$ has all gaps open if  $0<\lvert \lambda \rvert<e^{-C\beta}$ for some absolute constant $C$ by quantitative version Aubry duality.
For $\lambda \neq 1$, it was solved by Avila-You-Zhou \cite{Av2} via quantitative almost reducibility and Aubry duality.
Moreover, for $\alpha\in {\rm DC}_{1}$ Leguil-You-Zhao-Zhou \cite{Le} obtained the exponential asymptotic behavior on the gaps of AMO recently.
More precisely, for any  $m\in \mathbb{Z} \backslash\{0\}$ and $\xi\in(0,1)$, $\tilde{C}\lambda^{\tilde{\xi}|m|}\leq |G_{m}(\lambda)|\leq C\lambda^{\xi|m|}$ when $\lambda\in (0,1)$ and $\tilde{C}\lambda^{-\tilde{\xi}|m|}\leq|G_{m}(\lambda)|\leq C\lambda^{-\xi|m|}$ when $\lambda\in (1,\infty)$  with some constant $C=C(\lambda,\alpha, \xi)>0$, $\tilde{C}=\tilde{C}(\lambda,\alpha)$ and $\tilde{\xi}>1$.
For the  exponential decay of the gap length for the  extended Harper's model,  see Shi and Yuan \cite{ShiYuan} and Xu and Zhao \cite{Xuxu}.

For the analytic Schr\"{o}dinger operators,
Eliasson \cite{Eli} proved that for fixed $\alpha \in{\rm DC}_{d}$, $\Sigma_{V,\alpha}$ is a Cantor set for generic potentials by Moser-P\"{o}schel argument \cite{Mos}. Later, with the schemes of localization and Aubry duality,  Puig \cite{Puig2} extended this result to the non-perturbative case, i.e.,  the smallness of the potential is independent of the frequency. By almost reducibility,
Amor \cite{Am} proved that if $V$ is sufficiently small and $\alpha \in {\rm DC}_{d}$, then for all $m\in \mathbb{Z}^{d}$, $G_{m}(V)$ is at least sub-exponentially small with respect to $m$. After that, through multi-scale analysis scheme and Aubry duality Damanik and Goldstein \cite{Dam} showed that if
the Fourier coefficients of $V$ satisfy $|\widehat{V}(m)| \leq \varepsilon e^{-r_{0}|m|}$ for any $m\in\mathbb{Z}^{d}$,
then $|G_{m}(V)|\leq 2\varepsilon e^{-\frac{r_{0}}{2}|m|}$, where $\varepsilon:=\sup_{|\Im \theta|<r_{0}}|V(\theta)|$ is small enough. Later on, Leguil-You-Zhao-Zhou \cite{Le}  improved the results so that $|G_{m}(V)|\leq \varepsilon^{\frac{2}{3}}e^{-r|m|}$  holds for any $m\in \mathbb{Z}^{d}\backslash\{0\}$ if $\sup_{|\Im \theta|<r_{0}}|V(\theta)|<\varepsilon$ is sufficiently small, where the exponential decay rate $r\in (0,r_{0})$ can be arbitrarily close to $r_{0}$. For weakly coupled quasi-periodic Schr\"{o}dinger operators with Liouville frequencies, Liu and Shi \cite{LiuShi} proved the size of the spectral gaps decays exponentially.

Progresses have also been made in  Schr\"{o}dinger operators with potentials of lower regularity.
Avila-Bochi-Damanik
\cite{Av2} proved that given any frequency $\alpha\in \mathbb{T}^{d}$ rationally independent, the spectrum $\Sigma_{V,\alpha}$ is a Cantor set for
generic $V\in C^{0}(\mathbb{T}^{d},\mathbb{R})$.
Moreover, Cai and Ge \cite{Cai} proved Cantor spectrum for generic small and finitely smooth potential for $\alpha \in {\rm DC}_{d}$ by reducibility.
Wang and Zhang \cite{Wang} proved that if $V$ has two non-degenerate extremals (one is minimal and the other is maximal) and $V\in C^{2}(\mathbb{T},\mathbb{R})$, then $\Sigma_{\lambda V,\alpha}$ is a Cantor set for sufficiently large $\lambda$ and $\alpha \in {\rm DC}_{1}$.

As shown in \cite{Dam}, the decay rate of gap length is  closely related to the decay of Fourier coefficients of the potential $V$, which implies the dependence on the regularity of the potential.
Hence one can not expect exponential decay in the $C^{k}$ case, but only the polynomial decay since the Fourier coefficients of $C^{k}$ potentials decay polynomially. In this paper, we prove:
\begin{theorem} \label{main}
	Let $\alpha \in {\rm DC}_{d}(\gamma, \tau)$, $V\in C^{k}(\mathbb{T}^{d},\mathbb{R})$ with $k\geq D_{0}\tau$ where $D_{0}$ is a numerical constant. There exists $\varepsilon=\varepsilon(\gamma,\tau,k,d)>0$ such that if $\|V\|_{k}\leq\varepsilon$,  then
	\begin{equation}
	|G_{m}(V)| \leq \varepsilon^{\frac{1}{4}} |m|^{-\frac{k}{9}}.
	\end{equation}
\end{theorem}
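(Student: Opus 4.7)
The plan is to combine the refined $C^k$ quantitative reducibility theorem (available as one of the main technical ingredients of the paper) with a Moser-P\"oschel style analysis of the Schr\"odinger cocycle at the edges of $G_m(V)$. Recall that the cocycle $(\alpha, S_E^V)$ attached to $H_{V,\alpha,\theta}$ has a fibered rotation number $\rho(E)$ satisfying $N_{V,\alpha}(E) = 1 - 2\rho(E) \bmod \mathbb{Z}$. By the Gap Labelling Theorem, the two edges $E_m^{\pm}$ of $G_m(V)$ satisfy the resonance $2\rho(E_m^{\pm}) \equiv \langle m, \alpha \rangle \pmod{\mathbb{Z}}$, which is precisely the feature that allows the reducibility machinery to single out the mode $m$.

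My first step would be to fix $E = E_m^{\pm}$ and apply the $C^k$ quantitative reducibility to conjugate $(\alpha, S_E^V)$ into a perturbation of a constant elliptic cocycle whose rotation is exactly the resonant one. Concretely, the KAM scheme is iterated up to the step at which the $m$-th resonance is encountered, producing a $C^{k_1}$ conjugacy $B : \mathbb{T}^d \to \mathrm{SL}(2,\mathbb{R})$ (with $k_1 = k_1(k,\tau)$) and a residual perturbation $F$ such that
\[
B(\theta + \alpha)^{-1} S_E^V(\theta)\, B(\theta) = R_{\phi} \cdot e^{F(\theta)},
\]
with $2\phi \equiv \langle m, \alpha \rangle \pmod{\mathbb{Z}}$. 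The quantitative half of the reducibility theorem should supply size estimates $\|F\|_{C^{k_1}} \lesssim \varepsilon^{a}$ and $\|B\|_{C^{0}} \lesssim \varepsilon^{-b}$ for explicit small exponents $a,b$ depending on $\gamma,\tau,k,d$.

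Next, I would run the Moser-P\"oschel argument at this resonant energy: the coefficient of $F$ at Fourier mode $m$ governs the opening and the size of the gap. After translating between the original energy variable $E$ and the normal-form parameters, this yields an estimate of the form
\[
|G_m(V)| \;\lesssim\; \|B\|_{C^{0}}^{\,2}\, \bigl|\widehat{F}(m)\bigr|.
\]
Since $F \in C^{k_1}(\mathbb{T}^d)$, standard Fourier decay gives $|\widehat{F}(m)| \lesssim \|F\|_{C^{k_1}} |m|^{-k_1}$. Calibrating the KAM parameters so that $k_1 \gtrsim k/9$ and $\|B\|_{C^{0}}^{2}\,\|F\|_{C^{k_1}} \lesssim \varepsilon^{1/4}$ then delivers the claimed bound $|G_m(V)| \leq \varepsilon^{1/4} |m|^{-k/9}$.

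The main obstacle is the regularity bookkeeping. In the analytic setting one merely shrinks the strip width at each iteration, but in the $C^k$ setting each KAM step consumes a $\tau$-dependent amount of smoothness, while the conjugacy $B$ may simultaneously grow; both effects erode the Fourier decay of $\widehat{F}(m)$ and inflate the gap bound. The hypothesis $k \geq D_0 \tau$ must be taken large enough that, after all the cumulative losses, the effective Fourier exponent remains $\gtrsim k/9$ and the amplification $\|B\|_{C^{0}}^{2}$ can be absorbed into a prefactor no worse than $\varepsilon^{1/4}$. Deciding precisely at which KAM step to stop---namely the step that first captures the label-$m$ resonance while still retaining maximal $C^{k_1}$ control---and then executing the Moser-P\"oschel normal form and the Fourier-coefficient extraction quantitatively in $C^{k_1}$ rather than $C^{\omega}$, is where the real technical work lies.
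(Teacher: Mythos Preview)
Your two-step architecture (reducibility at the gap edge, then Moser--P\"oschel) is the same as the paper's, but two aspects differ materially, and one of them is a genuine gap.

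First, the paper does not stop at a rotation $R_\phi$ with a residual $F$; it pushes the reduction all the way to a constant \emph{parabolic} $\begin{pmatrix}1&\zeta\\0&1\end{pmatrix}$ (Theorem~\ref{r}). The Moser--P\"oschel step is then not a Fourier-coefficient extraction but an \emph{energy perturbation}: writing $X(\theta+\alpha)^{-1}S^{V}_{E_m^+-\delta}X(\theta)=B-\delta P(\theta)$ and conjugating once more (Lemma~\ref{kam}) one obtains $e^{b_0-\delta b_1}+\delta^2 P_1$; choosing $\delta_1=\zeta^{17/18}$ one checks $\det(b_0-\delta_1 b_1)>0$ and hence that the fibered rotation number has moved off zero, giving $|G_m(V)|\le\zeta^{17/18}$. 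Your heuristic $|G_m(V)|\lesssim \|B\|_{C^0}^2|\widehat F(m)|$ is not what the paper derives; the conjugacy enters through the averages $[X_{11}^2],[X_{11}X_{12}],[X_{12}^2]$ and an a~priori inequality of the form $\|X\|_{\tilde k}^{14}\zeta^{1/18}\ll 1$ (see \eqref{tiaojian} and Lemma~\ref{poly}), which is quite different from a bare $\|B\|_{C^0}^2$ prefactor.

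Second---and this is where your plan would actually fall short---the exponent $k/9$ does \emph{not} come from the Fourier decay $|\widehat F(m)|\lesssim\|F\|_{C^{k_1}}|m|^{-k_1}$. In the paper's $C^k$ scheme the surviving regularity after KAM is only $k_0\le k/20$ for the remainder and $\tilde k\le k/400$ for the conjugacy, far below $k/9$; ``calibrating $k_1\gtrsim k/9$'' is not achievable here. The decay comes instead from the \emph{size of the perturbation at the resonant step}: if the label~$m$ resonance is encountered at step $j_s$, the remainder there satisfies $|\bar f_{l_{j_s-1}}|_{1/l_{j_s}}\le\varepsilon_{l_{j_s}}=c(2\|A\|)^{-D}l_{j_s}^{-k/4}$, while the resonant site obeys $|m|\approx|n^*_{l_{j_s}}|\le N_{l_{j_s}}\approx l_{j_s}\log l_{j_s}$, so $\varepsilon_{l_{j_s}}\lesssim |m|^{-k/4+o(1)}$. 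It is this built-in polynomial smallness---inherited from the analytic approximation of the $C^k$ potential---that feeds into $|\nu_{l_{j_s}}|$ in \eqref{ess} and yields $|\zeta|\le\varepsilon^{1/3}|m|^{-k/8.5}$, hence $|G_m(V)|\le\zeta^{17/18}\le\varepsilon^{1/4}|m|^{-k/9}$. Your estimate $\|F\|_{C^{k_1}}\lesssim\varepsilon^a$ with $a$ independent of $m$ misses this mechanism entirely.
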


\subsection{Homogeneous spectrum}
Based on polynomial
decay of the gap length and H\"{o}lder continuity of the integrated density of
states (see Section \ref{IDS}), one can easily conclude the homogeneity of the spectrum.
Recall that in \cite{Carl}, the concept of a homogeneous set is defined as follows:
\begin{definition}
	Let $\mu>0$, a closed set $\mathfrak{B}\subset \mathbb{R}$ is called $\mu$-homogeneous if
	\begin{equation*}
	|\mathfrak{B}\cap(E-\epsilon,E+\epsilon)|>\mu \epsilon, \ \ \ \forall \ E\in \mathfrak{B}, \ \forall \ 0<\epsilon<{\rm diam}  \mathfrak{B}.
	\end{equation*}
\end{definition}
The homogeneity of the spectrum is vital in the inverse spectral theory, see the fundamental work of Sodin and Yuditskii \cite{So1,So2}. It was shown that the homogeneity of the spectrum implies the almost periodicity of the associated potentials \cite{So1, Dam2}. Particularly, the homogeneity of the spectrum is deeply related to Deift's conjecture \cite{Bin,Dam3}.
Recall that Deift's conjecture asks: whether the solutions of the KdV equation are almost periodic if the initial data is almost periodic? In the continuous case,  Binder-Damanik-Goldstein-Lukic \cite{Bin} proved that for
small analytic quasi-periodic initial data with
Diophantine frequency, the solution of the KdV equation is almost periodic in time variable. In the  discrete case, Leguil-You-Zhao-Zhou \cite{Le} proved that for the subcritical potential $V\in C^{\omega}(\mathbb{T},\mathbb{R})$, the  Toda flow is almost periodic in time variable for the almost periodic intial data with $\beta(\alpha)=0$.

There are several positive results on the homogeneity of the spectrum in the analytic case. In the discrete case, Leguil-You-Zhao-Zhou \cite{Le} proved that if $\alpha\in {\rm SDC}_{1}$\footnote{If there exist $\gamma,\tau>0$ such that $\|n\alpha\|_{\mathbb{R}\backslash\mathbb{Z}}\geq \frac{\gamma}{|n|(\log |n|)^{\tau}}$, then $\alpha $ is strong Diophantine, and denote by $\alpha \in {\rm SDC}_{1}(\gamma,\tau)$. Let ${\rm SDC}_{1}=\cup_{\gamma,\tau >0} {\rm SDC}_{1}(\gamma,\tau)$.}, then for a (measure-theoretically) typical analytic potential $V\in C^{\omega}(\mathbb{T},\mathbb{R})$, $\Sigma_{V,\alpha}$ is $\mu$-homogeneous for some $\mu\in (0,1)$. For the special example AMO, they further proved that if $\beta(\alpha)=0$ and $\lambda\neq 1 $, then $\Sigma_{\lambda,_{\alpha}}$ is $\mu$-homogeneous.
Recently, Jian and Shi \cite{JianShi} proved the homogeneity of
the spectrum for the non-self dual extended Harper's model with a Liouville frequency.
Liu and Shi \cite{LiuShi} proved similar results for the weakly coupled quasi-periodic Schrodinger operators with Liouville frequencies.
In the continuous case, consider the continuous quasi-periodic Schr\"{o}dinger operators on $L^{2}(\mathbb{R})$:
\begin{equation*}
(\mathcal{L}_{V,\omega}y)(t) =-y''(t)+V(\omega t)y(t).
\end{equation*}
Damanik-Goldstein-Lukic \cite{Dam2} proved that for fixed $\omega\in {\rm DC}_{d}$, the spectrum of $\mathcal{L}_{V,\omega}$ is $\frac{1}{2}$-homogeneous if $V$ is analytic and small enough.

Lately, there is also an intriguing counter example constructed by Avila-Last-Shamis-Zhou. In \cite{Av3}, they showed that even for the AMO, its spectrum is not homogeneous if $e^{-\frac{2}{3}\beta(\alpha)}<\lambda<e^{\frac{2}{3}\beta(\alpha)}$. Note that all the related contributions above deal with the analytic case. However, to the best of our knowledge there is no result in the finitely differentiable case at present.

In this paper, we prove the following:
\begin{theorem}
	Let $\alpha \in {\rm DC}_{d}(\gamma,\tau)$, $V\in C^{k}(\mathbb{T}^{d},\mathbb{R})$ with $k\geq D_{0}\tau$, where $D_{0}$ is a numerical constant. There exists $\bar{\varepsilon}=\bar{\varepsilon}(\gamma,\tau,k,d)$ such that if $\|V\|_{k}\leq \bar{\varepsilon}$,  then $\Sigma_{V,\alpha}$ is $\mu$-homogeneous for some $\mu\in (0,1)$.
\end{theorem}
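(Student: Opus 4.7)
The plan is to combine the polynomial gap-decay provided by Theorem~\ref{main} with the H\"{o}lder continuity of the integrated density of states $N:=N_{V,\alpha}$ established in Section~\ref{IDS}. Fix $E\in\Sigma_{V,\alpha}$ and $0<\epsilon<{\rm diam}\,\Sigma_{V,\alpha}$. The trivial identity
\begin{equation*}
|\Sigma_{V,\alpha}\cap(E-\epsilon,E+\epsilon)|=2\epsilon-\sum_{m\in\mathbb{Z}^{d}}|G_{m}(V)\cap(E-\epsilon,E+\epsilon)|
\end{equation*}
reduces the claim to producing a uniform upper bound $\sum_{m}|G_{m}(V)\cap(E-\epsilon,E+\epsilon)|\leq(1-\mu)\cdot 2\epsilon$ with some fixed $\mu\in(0,1)$.

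First I would localise the set of contributing labels. Since $E\in\Sigma_{V,\alpha}$, no spectral gap $G_{m}$ can contain $E$, so whenever $G_{m}\cap(E-\epsilon,E+\epsilon)\neq\emptyset$ one of the endpoints $E_{m}^{\pm}$ lies in this interval, and the Gap-Labelling Theorem forces
\begin{equation*}
\{\langle m,\alpha\rangle\}\in J_{\epsilon}:=\bigl[N(E-\epsilon),\,N(E+\epsilon)\bigr],\qquad |J_{\epsilon}|\leq C_{1}\epsilon^{\kappa},
\end{equation*}
where $\kappa>0$ is the H\"{o}lder exponent of the IDS from Section~\ref{IDS}. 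Combined with $\alpha\in{\rm DC}_{d}(\gamma,\tau)$, any two distinct labels $m_{1},m_{2}$ with $|m_{i}|\leq M$ lying in $J_{\epsilon}$ satisfy $\gamma(2M)^{-\tau}\leq\|\langle m_{1}-m_{2},\alpha\rangle\|\leq C_{1}\epsilon^{\kappa}$. Choosing the cutoff $M:=c_{0}\epsilon^{-\kappa/\tau}$ with $c_{0}=c_{0}(\gamma,C_{1})$ small enough, at most one intersecting gap can have $|m|\leq M$.

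The two contributions are now estimated separately. The (unique) small-label gap $G_{m_{0}}$ lies strictly on one side of $E$, so it contributes at most $\epsilon$ to the intersection. For the tail, Theorem~\ref{main} together with comparison to an integral gives
\begin{equation*}
\sum_{|m|>M}|G_{m}(V)|\leq\bar{\varepsilon}^{1/4}\sum_{|m|>M}|m|^{-k/9}\leq\frac{C_{2}\,\bar{\varepsilon}^{1/4}}{k/9-d}\,M^{d-k/9}=C_{3}\,\bar{\varepsilon}^{1/4}\,\epsilon^{\kappa(k/9-d)/\tau},
\end{equation*}
valid as soon as $k>9d$. If the numerical constant $D_{0}$ is taken large enough relative to $d$ and $\kappa$ so that the hypothesis $k\geq D_{0}\tau$ implies $\kappa(k/9-d)/\tau\geq 1$, then the tail is dominated by $C_{3}\bar{\varepsilon}^{1/4}\epsilon\leq\tfrac{1}{2}\epsilon$ once $\bar{\varepsilon}$ is small. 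Altogether the total gap mass in $(E-\epsilon,E+\epsilon)$ is at most $\tfrac{3}{2}\epsilon$, yielding $|\Sigma_{V,\alpha}\cap(E-\epsilon,E+\epsilon)|\geq\tfrac{1}{2}\epsilon$ for all sufficiently small $\epsilon$. Macroscopic $\epsilon$ is handled by monotonicity (apply the small-$\epsilon$ estimate at a fixed threshold $\epsilon_{0}$ and divide by ${\rm diam}\,\Sigma_{V,\alpha}$), giving a uniform $\mu\in(0,1)$.

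The principal difficulty is not conceptual but bookkeeping: one must calibrate the three exponents—the gap-decay rate $k/9$ from Theorem~\ref{main}, the H\"{o}lder exponent $\kappa$ from Section~\ref{IDS}, and the Diophantine exponent $\tau$—so that the single hypothesis $k\geq D_{0}\tau$ is strong enough to secure $\kappa(k/9-d)/\tau\geq 1$; this inequality is exactly what pins down the numerical constant $D_{0}$ in the statement. Once this compatibility is arranged, the remainder of the proof is a routine combination of the Gap-Labelling Theorem, Diophantine separation of labels, and the polynomial gap bound.
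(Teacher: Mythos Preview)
Your proposal is correct and follows essentially the same route as the paper: use H\"older continuity of the IDS (this is Lemma~\ref{holder}, with $\kappa=\tfrac12$, not Section~\ref{IDS}) together with the Diophantine condition to see that at most one small-label gap meets $(E-\epsilon,E+\epsilon)$, bound the remaining tail by the polynomial gap estimate of Theorem~\ref{main}, and then pass to large $\epsilon$ by monotonicity. The only organisational difference is that the paper splits off three explicit cases according to whether the unbounded complement $G_{0}(V)$ intersects the window, whereas you absorb this into the label $m=0$; the arguments are otherwise identical.
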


As is already mentioned in the abstract, our philosophy throughout the whole paper is that fine reducibility properties of the dynamics indicate nice spectral applications of the operator. In this spirit, the main body is organized as follows.

\subsection{Structure of the paper}
In Section 2, we give some basic concepts and notations, which will appear both in the dynamical side and the spectral side. In Section 3, we  prove the quantitative reducibility of $C^{k}$ quasi-periodic ${\rm SL}(2,\mathbb{R})$ cocycles with the rotation number being rational with respect to the frequency in the local perturbative regime.
As for Section 4, by the Moser-P\"{o}schel argument of quasi-periodic Schr\"{o}dinger cocycles and the reducibility results, we prove that the upper bound of the gap length decays polynomially. As an application, we show the homogeneity of the spectrum in Section 5.

\section{Preliminaries}
For a bounded analytic function $F$ (possibly matrix-valued) defined on $\mathcal{S}_{r}=\{\theta : \theta=(\theta_{1},\cdots,\theta_{d})\in \mathbb{C}^{d},|\Im\theta_{i}|<r, \forall \ i=1,\cdots,d\}$, let $|F|_{r}=\sup_{\theta\in \mathcal{S}_{r}} \|F(\theta)\|$ and denote by $C^{\omega}_{r}(\mathbb{T}^{d},*)$ the set of these $*$-value functions ($*$ will usually denote $\mathbb{R}, {\rm sl}(2,\mathbb{R})$ or ${\rm SL}(2,\mathbb{R})$). We also denote the set $C^{k}(\mathbb{T}^{d},*)$ to be the space of $k$ times differentiable with continuous $k$-th derivatives functions, endowed with the norm
\begin{equation*}
\|F\|_{k} :=\sup_{|k'|\leq k, \theta\in \mathbb{T}^{d}} \|\partial^{k'} F(\theta)\|.
\end{equation*}
In particular,
\begin{equation*}
\|F\|_{0} := \|F\|_{\mathbb{T}^{d}} = \sup_{\theta\in \mathbb{T}^{d}}\|F(\theta)\|.
\end{equation*}

\subsection{Quasi-periodic cocycle, uniform hyperbolicity and reducibility}
Given $A\in C^{0}(\mathbb{T}^{d},{\rm SL}(2,\mathbb{C}))$ and $(1,\alpha)$  rationally independent,  one can define the quasi-periodic cocycle $(\alpha, A)$:
\begin{equation*}
(\alpha, A):
\left\{
\begin{split}
&\ \ \ \mathbb{T}^{d}\times \mathbb{C}^{2} \rightarrow\mathbb{T}^{d}\times \mathbb{C}^{2},\\
&(\theta,v)\mapsto (\theta+\alpha, A(\theta)\cdot v).
\end{split}
\right.
\end{equation*}
The iterates of $(\alpha,A)$ are of the form $(\alpha,A)^{n} = (n\alpha, A_{n})$, where
\begin{equation*}
A_{n}(\theta):=
\begin{cases}
A(\theta+(n-1)\alpha) \cdots A(\theta+\alpha)A(\theta),&n\geq 0,\\
A^{-1}(\theta+n\alpha)A^{-1}(\theta+(n+1)\alpha)\cdots A^{-1}(\theta-\alpha),&n<0.
\end{cases}
\end{equation*}
We say the cocycle $(\alpha,A)$ is uniformly hyperbolic if for every $\theta\in \mathbb{T}^{d}$, there exists a continuous splitting $\mathbb{C}^{2}=E^{s}(\theta)\oplus E^{u}(\theta)$ such that for every $n\geq 0$,
\begin{equation*}
\begin{split}
\|A_{n}(\theta)v\| &\leq Ce^{-cn}\|v\|, \ \ v\in E^{s}(\theta),\\
\|A_{-n}(\theta)v\| &\leq Ce^{-cn}\|v\|, \ \ v\in E^{u}(\theta),
\end{split}
\end{equation*}
for some constants $C,c>0$. And the splitting is invariant by the dynamics:
\begin{equation*}
\begin{split}
A(\theta)E^{s}(\theta) &= E^{s}(\theta+\alpha),  \ \ \forall \ \theta \in \mathbb{T}^{d}, \\
A(\theta)E^{u}(\theta) &= E^{u}(\theta+\alpha),  \ \ \forall \ \theta \in \mathbb{T}^{d}.
\end{split}
\end{equation*}

Typical examples of ${\rm SL}(2,\mathbb{R})$ cocycles  are the Schr\"{o}dinger cocycles $(\alpha, S^{V}_{E})$:
\begin{equation*}
A(\theta) = S^{V}_{E}(\theta) = \begin{pmatrix}
E-V(\theta)&-1\\
1&0
\end{pmatrix}, \ \ \ E\in \mathbb{R}.
\end{equation*}
Those cocycles come from the eigenvalue equation of one dimensional quasi-periodic Schr\"{o}dinger operators on $\ell^{2}(\mathbb{Z})$:
\begin{equation*}
(H_{V,\alpha,\theta} u)_{n}=u_{n+1} + u_{n-1}+V(\theta+n\alpha)u_{n} = Eu_{n},
\end{equation*}
and any formal solution $u=(u_{n})_{n\in \mathbb{Z}}$ of $H_{V,\alpha,\theta}u=Eu$ satisfies
\begin{equation*}
\begin{pmatrix}
u_{n+1}\\
u_{n}
\end{pmatrix}=S^{V}_{E}(\theta+n\alpha)
\begin{pmatrix}
u_{n}\\
u_{n-1}
\end{pmatrix}, \ \ \forall \ n\in \mathbb{Z}.
\end{equation*}
The spectral properties of $H_{V,\alpha,\theta}$ and the dynamics of $(\alpha, S^{V}_{E})$ are closely related by the fact: $E\notin {\Sigma_{V,\alpha}}$ if and only if $(\alpha, S^{V}_{E})$ is uniformly hyperbolic \cite{Jo}.

The concepts of reducibility  and almost reducibility in both $C^{\omega}$ case and $C^{k}$ case are indispensable when one deals with Schr\"{o}dinger cocycles.
\begin{definition}\label{de1}
	The cocycle $(\alpha, A(\theta)) \in \mathbb{T}^{d}\times C^{\omega}(\mathbb{T}^{d}, {\rm SL}(2,\mathbb{R}))$ is called $C^{\omega}_{r,r'}$ almost reducible with $r'< r$ if there exist $B_{j}\in C^{\omega}_{r_{j}}(2\mathbb{T}^{d},{\rm SL}(2,\mathbb{R}))$, $A_{j}\in {\rm SL}(2,\mathbb{R})$ and $f_{j}\in C^{\omega}_{r_{j}}(\mathbb{T}^{d},{\rm sl}(2,\mathbb{R}))$ such that
	\begin{equation*}
	B_{j}(\theta+\alpha)^{-1}A(\theta)B_{j}(\theta) =A_{j}e^{f_{j}(\theta)},
	\end{equation*}
	and
	\begin{equation*}
	|f_{j}(\theta)|_{r_{j}}\rightarrow 0, \  \ j\rightarrow +\infty.
	\end{equation*}
	Moreover, the cocycle $(\alpha, A(\theta))$ is called $C^{\omega}_{r,r'}$ reducible if there exist $\tilde{B}\in C^{\omega}_{r'}(2\mathbb{T}^{d},{\rm SL}(2,\mathbb{R}))$ and $\tilde{A}\in {\rm SL}(2,\mathbb{R})$ such that
	\begin{equation*}
	\tilde{B}(\theta+\alpha)^{-1}A(\theta)\tilde{B}(\theta)=\tilde{A}.
	\end{equation*}
\end{definition}

In the finitely differentiable case, one has the definition
of almost reducibility and reducibility similarly as in Definition \ref{de1}. However, to avoid repeating the narrative, we give another equivalent definition.

\begin{definition}
	The cocycle $(\alpha, A(\theta))\in \mathbb{T}^{d}\times C^{k}(\mathbb{T}^{d}, {\rm SL}(2,\mathbb{R}))$ is called $C^{k,k'}$ almost reducible if the $C^{k'}$-closure of the its $C^{k'}$ conjugacies contains a constant.
	Moreover, the cocycle $(\alpha, A(\theta))$ is called reducible if its $C^{k'}$ conjugacies contain a constant.
\end{definition}

\subsection{Fibered rotation number }
\label{FRN}
Assume that $A\in C^{0}(\mathbb{T}^{d},{\rm SL}(2,\mathbb{R}))$ is homotopic to the identity. It induces the projective skew-product $F_{A}:\mathbb{T}^{d}\times \mathbb{S}^{1}\rightarrow\mathbb{T}^{d}\times \mathbb{S}^{1}$ with
\begin{equation*}
F_{A}(x,w):=\left(x+\alpha, \frac{A(x)\cdot w}{\|A(x)\cdot w\|}\right),
\end{equation*}
which is also homotopic to the identity. Thus we can lift $F_{A}$ to a map $\widetilde{F}_{A}:\mathbb{T}^{d}\times \mathbb{R}\rightarrow \mathbb{T}^{d}\times \mathbb{R}$ of form $\widetilde{F}_{A}(x,y) = (x+\alpha,y+\psi_{x}(y))$, where for every $x\in \mathbb{T}^{d}$, $\psi_{x}$ is $\mathbb{Z}$-periodic. The map $\psi:\mathbb{T}^{d}\times \mathbb{R}\rightarrow \mathbb{R}$ is called a lift of $A$. Let $\mu $ be any probability measure on $\mathbb{T}^{d}\times \mathbb{R}$ which is invariant by $\widetilde{F}_{A}$, and whose projection on the first coordinate is given by Lebesgue measure. The number
\begin{equation*}
\rho(\alpha, A) = \int_{\mathbb{T}^{d}\times \mathbb{R}} \psi_{x}(y)d\mu(x,y) \mod \mathbb{Z}
\end{equation*}
depends neither on the lift $\psi$ nor on the measure $\mu$, and is called the fibered rotation number of the cocycle $(\alpha,A)$.

Given $\varphi \in \mathbb{T}$, let $R_{\varphi}:=\begin{pmatrix}
\cos 2\pi \varphi &-\sin 2\pi \varphi  \\
\sin2\pi \varphi&\cos 2\pi\varphi
\end{pmatrix}$.
If $A:2\mathbb{T}^{d}\rightarrow{\rm SL}(2,\mathbb{R})$ is homotopic to $\theta\in \mathbb{T}^{d} \rightarrow R_{\frac{\langle n, \theta\rangle}{2}}$  for some $n\in \mathbb{Z}^{d}$, then  $n$ is called the degree of $A$ which is denoted by $\deg A$.
The fibered rotation number is invariant under real conjugacies which are homtopic to the identity. More generally, if $(\alpha,A_{1})$ is conjugated to $(\alpha, A_{2})$, i.e., $B(\theta+\alpha)^{-1}A_{1}(\theta)B(\theta) = A_{2}(\theta)$, for some $B: 2\mathbb{T}^{d}\rightarrow {\rm SL}(2,\mathbb{R})$, then
\begin{equation}
\rho(\alpha, A_{2}) = \rho(\alpha, A_{1})-\frac{\langle \deg B, \alpha \rangle}{2} \mod \mathbb{Z}. \label{rho}
\end{equation}
Moveover, it follows from the definition of rotation number that
\begin{lemma}\label{eig}
	If $A:\mathbb{T}^{d}\rightarrow {\rm SL}(2,\mathbb{R})$ is homotopic to the identity, then
	\begin{equation*}
	|\rho(\alpha,A)-\varphi|<\|A-R_{\varphi}\|_{\mathbb{T}^{d}}.
	\end{equation*}
\end{lemma}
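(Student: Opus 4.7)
The plan is to reduce the global estimate on the fibered rotation number to a pointwise estimate on the lifted projective action, using the measure-theoretic definition of $\rho$ given in Section \ref{FRN}. First I would observe that for the constant cocycle $(\alpha, R_{\varphi})$ one may take $\psi^{R_{\varphi}}_{\theta}(y) \equiv \varphi$ as a lift of the projective skew-product $F_{R_{\varphi}}$, so that integration against any invariant probability measure gives $\rho(\alpha, R_{\varphi}) = \varphi$. Since $A$ is homotopic to the identity by hypothesis, choose a lift $\psi^{A}$ of $F_{A}$ and let $\mu$ be a probability measure on $\mathbb{T}^{d}\times \mathbb{R}$ invariant under $\widetilde{F}_{A}$, whose first-coordinate marginal is Lebesgue measure on $\mathbb{T}^{d}$. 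Then by the definition in Section \ref{FRN},
\[
\rho(\alpha, A) - \varphi \;=\; \int_{\mathbb{T}^{d}\times \mathbb{R}} \bigl(\psi^{A}_{\theta}(y) - \varphi \bigr)\, d\mu(\theta, y) \pmod{\mathbb{Z}}.
\]
Hence it suffices to control the integrand pointwise.

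Second, I would establish the pointwise inequality
\[
\bigl|\psi^{A}_{\theta}(y) - \varphi\bigr| \;<\; \|A(\theta) - R_{\varphi}\|, \qquad \forall\,\theta\in\mathbb{T}^{d},\ \forall\, y\in\mathbb{R},
\]
which immediately implies the lemma by taking the sup over $\theta$ and integrating. To do this, parameterize $\mathbb{S}^{1}$ by the unit vector $w(y) = (\cos 2\pi y, \sin 2\pi y)^{\top}$, so that (up to the choice of lift) $\psi^{A}_{\theta}(y)$ is the oriented angle from $w(y)$ to $A(\theta)\,w(y)/\|A(\theta)\,w(y)\|$, divided by $2\pi$. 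Writing $A(\theta) = R_{\varphi} + E(\theta)$ with $\|E(\theta)\| \leq \|A - R_{\varphi}\|_{\mathbb{T}^{d}}$, the vector $A(\theta)w(y)$ equals $R_{\varphi}w(y)$ plus a perturbation of Euclidean norm at most $\|E(\theta)\|$. Since $R_{\varphi}w(y)$ is a unit vector making oriented angle $2\pi\varphi$ with $w(y)$, an elementary planar-geometry computation controls the deflection of the argument by the norm of the perturbation, yielding strict inequality provided $E(\theta)\neq 0$ (the case $E(\theta) \equiv 0$ is trivial).

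The main obstacle I anticipate is carrying out the last step with the sharp constant $1$ and strict inequality. A convenient route is to write the angle deflection as an arctangent and use that $|\arctan x| \leq |x|$ for $x\in\mathbb{R}$, combined with the fact that $R_{\varphi}w(y)$ is already a unit vector so the denominator in the relevant ratio is at least as large as the numerator of interest. If one wants to avoid any trigonometric bookkeeping, an alternative is to note that the assignment $A \mapsto \rho(\alpha, A)$ is continuous (indeed $1$-Lipschitz) in the $C^{0}$ norm on the connected component of the identity in $C^{0}(\mathbb{T}^{d},\mathrm{SL}(2,\mathbb{R}))$, a fact that follows from the Birkhoff-average formulation of $\rho$ together with the same pointwise angular estimate; this packages the geometric content in a form that directly yields the stated inequality after identifying $\rho(\alpha, R_{\varphi}) = \varphi$.
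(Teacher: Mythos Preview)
The paper does not give a proof of this lemma; it simply states that the inequality ``follows from the definition of rotation number'' (the sentence immediately preceding the lemma in Section~\ref{FRN}). Your sketch is exactly the standard way to make that assertion precise---integrate the pointwise angular deviation against an invariant measure and identify $\rho(\alpha,R_{\varphi})=\varphi$---so you are in agreement with the paper's (implicit) argument.

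The only point worth tightening is the choice of lift. When $\|A-R_{\varphi}\|_{\mathbb{T}^{d}}<1$, for every $\theta$ and $y$ the vector $A(\theta)w(y)=R_{\varphi}w(y)+E(\theta)w(y)$ satisfies $\langle A(\theta)w(y),\,R_{\varphi}w(y)\rangle\geq 1-\|E(\theta)\|>0$, so it lies in the open half-plane determined by $R_{\varphi}w(y)$; hence there is a unique lift $\psi^{A}_{\theta}(y)$ with $|\psi^{A}_{\theta}(y)-\varphi|<\tfrac{1}{4}$, and this is the lift to which your arctangent bound applies. With the $1/(2\pi)$ normalization of the circle, $|\psi^{A}_{\theta}(y)-\varphi|=\tfrac{1}{2\pi}\bigl|\arctan\tfrac{b}{1+a}\bigr|\leq \tfrac{1}{2\pi}\cdot\tfrac{\|E(\theta)\|}{1-\|E(\theta)\|}$ (writing $E(\theta)w(y)=a\,R_{\varphi}w(y)+b\,(R_{\varphi}w(y))^{\perp}$), which is comfortably below $\|E(\theta)\|$ and gives the strict inequality. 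When $\|A-R_{\varphi}\|_{\mathbb{T}^{d}}\geq \tfrac{1}{2}$ the conclusion, read modulo $\mathbb{Z}$, is trivial.
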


\subsection{Integrated density of states}
 \label{IDS}

For Schr\"{o}dinger operators $H_{V,\alpha,\theta}$, an important concept is the  integrated density of states (IDS),
which is the function $N_{V,\alpha}: \mathbb{R}\rightarrow [0,1]$ defined by
\[
N_{V,\alpha}(E) = \int_{\mathbb{T}^{d}}\mu_{V,\alpha,\theta}(-\infty,E]d\theta,
\]
where  $\mu_{V,\alpha,\theta} = \mu_{V,\alpha,\theta}^{e_{-1}}+\mu_{V,\alpha,\theta}^{e_{0}}$ is the spectral measure of $H_{V,\alpha,\theta}$, and  $\{e_{i}\}_{i\in \mathbb{Z}}$ is the cannonical basis of $\ell^{2}(\mathbb{Z})$. In particular, $\{e_{-1}, e_{0}\}$ are called the
cyclic vectors of $H_{V,\alpha,\theta}$.

Another way to characterize the IDS is to calculate the distribution of eigenvalues by truncating the operator $H_{V,\alpha,\theta}$ in the interval $[-L, L]$.
Consider $H_{V,\alpha,\theta}^{L}$ the restriction of $H_{V,\alpha,\theta}$ in $[-L, L]$ with zero boundary conditions, and let
\begin{equation*}
N_{V,\alpha,\theta}^{L}(E) = \frac{1}{2L+1} \# \{ x: x \leq E, x \  \text{is eigenvalue of} \ H_{V,\alpha,\theta}^{L}\}.
\end{equation*}
Then the IDS can be defined by
\begin{equation*}
N_{V,\alpha}(E) = \lim_{L\rightarrow \infty} N_{V,\alpha,\theta}^{L}(E),
\end{equation*}
where the limit exists and is independent of $\theta$. For more details, readers can refer to \cite{AS}.
Moreover, $\rho(\alpha,S^{V}_{E})$ relates to the IDS as follows:
\begin{equation}
N_{V,\alpha}(E)=1-2\rho(\alpha,S^{V}_{E})  \mod \mathbb{Z}. \label{ids}
\end{equation}

\subsection{Analytic approximation}
Assume $f\in C^{k}(\mathbb{T}^{d},{\rm sl(2,\mathbb{R})})$, according to Zehnder's result \cite{Zen}, there exist a sequence $\{
f_{j}\}_{j\geq 1}$ with $f_{j}\in C^{\omega}_{\frac{1}{j}}(\mathbb{T}^{d},{\rm sl}(2,\mathbb{R}))$ and a universal constant $C^{\prime}>0$, such that
\begin{equation}
\begin{split}
&\|f_{j}-f\|_{k}\rightarrow 0,\ \ \ \ j\rightarrow +\infty,\\
&|f_{j}|_{\frac{1}{j}}\leq  C^{\prime}\|f\|_{k},\\
&|f_{j+1}-f_{j}|_{\frac{1}{j+1}}\leq C^{\prime}(j)^{-k}\|f\|_{k}.
\end{split}    \label{aa}
\end{equation}
Moreover, if $k\leq k'$ and $f\in C^{k'}(\mathbb{T}^{d},{\rm sl}(2,\mathbb{R}))$, these inequalities (\ref{aa}) hold with $k'$ instead of $k$. That means this sequence is obtained from $f$ regardless of its regularity. Actually, $f_{j}$ is the convolution of
$f$ with a map which does not depend on $k$.

\section{Dynamical estimates of $C^{k}$ quasi-periodic cocycle }
Kolmogorov-Arnold-Moser theory naturally arrives in the study of perturbative problems. In the light of classical KAM theory for smooth Hamiltonians, we use the analytic approximation to derive estimates of finitely differentiable cocycle from those of analytic ones. To achieve this, we first establish an analytic KAM theorem.

\subsection{Analytic KAM theorem}
Let us consider the following quasi-periodic ${\rm SL}(2,\mathbb{R})$ cocycle
\begin{equation*}
(\alpha, Ae^{f(\theta)}):
\left\{
\begin{split}
&\ \ \ \mathbb{T}^{d}\times\mathbb{R}^{2}\rightarrow \mathbb{T}^{d}\times\mathbb{R}^{2},\\
&(\theta,x)\mapsto(\theta+\alpha, Ae^{f(\theta)}\cdot x),
\end{split}
\right.
\end{equation*}
where $\alpha \in {\rm DC}_{d}(\gamma,\tau)$, $A\in {\rm SL}(2,\mathbb{R})$ and $f(\theta)\in C^{\omega}_{r}(\mathbb{T}^{d},{\rm sl}(2,\mathbb{R}))$ with $r>0$. Assume $|f|_{r}$ is sufficiently small, we are going to show that the perturbation falls into a much smaller magnitude by analytic conjugacy.

\begin{proposition}[\cite{Cai2}]	\label{pr1}
	Let $\alpha \in {\rm DC}_{d}(\gamma,\tau)$, $\gamma,r>0, \tau >d$ and $\sigma=\frac{1}{10}$. Then for any $r_{+}\in (0,r)$, there exist $c=c(\gamma,\tau,d)$ and a numerical constant $D$ such that if
	\begin{equation}
	|f|_{r}<\varepsilon_{0} \leq \frac{c}{\|A\|^{D}}(r-r_{+})^{D\tau}, \label{ep0}
	\end{equation}
	then there exist $B(\theta)\in C^{\omega}_{r_{+}}(2\mathbb{T}^{d},{\rm SL}(2,\mathbb{R}))$, $A_{+}\in {\rm SL}(2,\mathbb{R})$ and $f_{+}(\theta) \in C^{\omega}_{r_{+}}(\mathbb{T}^{d}, {\rm sl}(2,\mathbb{R}))$ such that  $(\alpha,Ae^{f(\theta)})$ is conjugated to $(\alpha, A_{+}e^{f_{+}(\theta)})$ by $B(\theta)$, i.e.
	\begin{equation*}
	B(\theta+\alpha)^{-1}Ae^{f(\theta)}B(\theta) = A_{+}e^{f_{+}(\theta)}.
	\end{equation*}
	More precisely, let $N=\frac{2|\ln \varepsilon_{0}|}{r-r_{+}}$ and $\{e^{2\pi i \rho}, e^{-2\pi i \rho}\}$ be the two eigenvalues of $A$, we can distinguish two cases:
	
	{\bf {(A)}}{\rm (Non-resonant case)} Assume that
	\begin{equation*}
	\|2\rho- \langle n,\alpha\rangle\|_{\mathbb{R}/\mathbb{Z}}\geq \varepsilon_{0}^{\sigma},
	\ \ \forall \ n\in\mathbb{Z}^{d} \ \ with \ \ 0<|n|\leq N,
	\end{equation*}
	then we have estimates:
    \begin{equation*}
    |f_{+}(\theta)|_{r_{+}} \leq 4\varepsilon_{0}^{3-2\sigma}, \ |B(\theta)-{\rm Id}|_{r_{+}}\leq \varepsilon_{0}^{\frac{1}{2}}, \ \|A_{+}-A\|\leq 2\|A\|\varepsilon_{0}.
    \end{equation*}	

    {$\bf {(B)}$}{\rm (Resonant case)} If there exists $n^{*}\in \mathbb{Z}^{d}$ with $0<|n^{*}|\leq N$ such that
	\begin{equation*}
	\|2\rho- \langle n^{*},\alpha\rangle\|_{\mathbb{R}/\mathbb{Z}}< \varepsilon_{0}^{\sigma},
	\end{equation*}
	then
	\begin{equation}
	B(\theta) = P \circ e^{Y(\theta)} \circ R_{\frac{\langle n^{*},\theta \rangle}{2}}, \label{com}
	\end{equation}
	 where $P\in {\rm SL}(2,\mathbb{R}), Y\in C^{\omega}_{r_{+}}(\mathbb{T}^{d},{\rm sl}(2,\mathbb{R}))$ and $R_{\frac{\langle n^{*},\theta \rangle}{2}}$ is the rotation matrix as in Section \ref{FRN} with estimates:	
	\begin{equation}\label{B}
	|B(\theta)|_{r_{+}} \leq 4\|A\|^{\frac{1}{2}}\gamma^{-\frac{1}{2}}|n^{*}|^{\frac{\tau}{2}}e^{\pi |n^{*}|r_{+}}, \  |f_{+}(\theta)|_{r_{+}} \ll \varepsilon_{0}^{100}.
	\end{equation}
	Moreover, $\deg B(\theta) = n^{*}$ and the constant $A_{+}$ can be written as
	\begin{equation}
	A_{+} = M^{-1}\exp 2\pi \begin{pmatrix} it_{+} & \nu_{+} \\\overline{\nu_{+}} & - it_{+}\end{pmatrix}M \label{A+}
	\end{equation}
	with estimates $|\nu_{+}|\leq 4\|A\|\gamma^{-1}|n^{*}|^{\tau}\varepsilon_{0} e^{-2\pi |n^{*}|r}$ and $|t_{+}|\leq \frac{3}{5}\varepsilon_{0}^{\sigma}$, where $M=\frac{1}{1+i}\begin{pmatrix} 1 & -i \\1 & i \end{pmatrix}$ and $t_{+}\in \mathbb{R}, \nu_{+}\in \mathbb{C}$. Let  $A_{+}:=e^{A''}$ with $A''\in {\rm sl}(2,\mathbb{R})$ and  $\{e^{2\pi i \rho_{+}}, e^{-2\pi i \rho_{+}}\}$ be the two eigenvalues of $A_{+}$, then
	\begin{equation*}
	\|A''\|\leq 8\varepsilon_{0}^{\sigma}, \ \ |\rho_{+}|\leq 2\varepsilon_{0}^{\sigma}.
	\end{equation*}
\end{proposition}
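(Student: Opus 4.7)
The plan is to carry out a single KAM step by conjugating $(\alpha, Ae^{f})$ via an appropriate $B(\theta)$ that solves a linearized (cohomological) equation in $\mathrm{sl}(2,\mathbb{R})$, then reading off the promised quantitative estimates from the structure of the small divisors. I would first truncate $f$ in Fourier modes at level $N = 2|\ln\varepsilon_0|/(r-r_+)$, writing $f = f^{\leq N} + f^{>N}$; the tail is controlled by $|f^{>N}|_{r_+} \lesssim \varepsilon_0 e^{-(r-r_+)N} \lesssim \varepsilon_0^2$, so only the low modes need to be killed dynamically. The linear step is to find $Y(\theta)$ solving
\begin{equation*}
A^{-1}Y(\theta+\alpha)A - Y(\theta) = f^{\leq N}(\theta) - \widehat{f}(0),
\end{equation*}
which in Fourier coordinates amounts to inverting, on each slice $\mathrm{sl}(2,\mathbb{C})$, the operator $e^{2\pi i \langle n,\alpha\rangle}\,\mathrm{Ad}(A^{-1}) - \mathrm{Id}$. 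Its spectrum is governed by the three small quantities $\|\langle n,\alpha\rangle\|$ and $\|\langle n,\alpha\rangle \pm 2\rho\|$, where the first is always handled by the Diophantine condition $\gamma/|n|^\tau$, while the other two dictate the dichotomy of the statement.

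In the non-resonant case (A), every small divisor $\|\langle n,\alpha\rangle \pm 2\rho\|$ with $0<|n|\leq N$ is at least $\varepsilon_0^\sigma$, so the cohomological equation admits a solution $Y$ with $|Y|_{r_+} \lesssim \varepsilon_0^{1-\sigma}$ after choosing an intermediate strip to absorb the polynomial loss from $\gamma/|n|^\tau$. Setting $B = e^Y$ and $A_+ = A e^{\widehat{f}(0)}$, a Baker--Campbell--Hausdorff expansion of $B(\theta+\alpha)^{-1}Ae^{f(\theta)}B(\theta)$ cancels the linear terms by construction; what remains is quadratic in $(Y,f)$ plus the tail $f^{>N}$. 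Combining with a refined use of the cancellation between the $n=0$ Fourier mode and the drift of the constant part yields the claimed $|f_+|_{r_+} \leq 4\varepsilon_0^{3-2\sigma}$; the estimates $|B-\mathrm{Id}|_{r_+} \leq \varepsilon_0^{1/2}$ and $\|A_+-A\|\leq 2\|A\|\varepsilon_0$ are immediate from $|Y|_{r_+}$ and from $A_+ = Ae^{\widehat{f}(0)}$.

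The resonant case (B) is handled by first removing the bad frequency $n^{*}$ by the preliminary rotation $R_{\langle n^{*},\theta\rangle/2}$, which is the origin of the $2\mathbb{T}^d$-periodicity in $B$. A direct computation shows
\begin{equation*}
R_{-\langle n^{*},\theta+\alpha\rangle/2}\, A\, R_{\langle n^{*},\theta\rangle/2}
\end{equation*}
is an $\mathrm{SL}(2,\mathbb{R})$ cocycle whose rotation number is $\rho - \langle n^{*},\alpha\rangle/2$, which by hypothesis satisfies $\|2(\rho-\langle n^{*},\alpha\rangle/2)\|<\varepsilon_0^\sigma$. After a further constant conjugation by $P$ that diagonalizes $A$ in $\mathrm{SU}(1,1)$-coordinates via $M=\tfrac{1}{1+i}\bigl(\begin{smallmatrix} 1 & -i \\ 1 & i\end{smallmatrix}\bigr)$, the constant part lies $\varepsilon_0^\sigma$-close to the identity. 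Now the only remaining resonant mode has been moved to $n=0$, so the non-resonant KAM reduction of the previous paragraph can be applied to the shifted perturbation; one absorbs the new average into $A_+$ and the non-constant part into $e^{Y(\theta)}$, leading to $B = P\circ e^{Y(\theta)} \circ R_{\langle n^{*},\theta\rangle/2}$ with $\deg B = n^{*}$. The parameter $\nu_+$ in $A_+$ is an off-diagonal Fourier coefficient of $f$ at $n^{*}$, which carries a factor $e^{-2\pi|n^{*}|r}$ from analyticity, whereas $t_+$ measures the on-diagonal drift bounded by the resonance gap $\varepsilon_0^\sigma$; together these yield the stated $|\nu_+|\leq 4\|A\|\gamma^{-1}|n^{*}|^\tau\varepsilon_0 e^{-2\pi|n^{*}|r}$ and $|t_+|\leq \tfrac{3}{5}\varepsilon_0^\sigma$, from which the bounds on $\|A''\|$ and $|\rho_+|$ follow.

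The main difficulty is the bookkeeping in case (B). The pre-rotation by $R_{\langle n^{*},\theta\rangle/2}$ inflates the norm of $B$ by $e^{\pi|n^{*}|r_+}$, while the diagonalization $P$ blows up when $A$ is nearly parabolic; one must check that these are balanced against each other and against $\gamma^{-1/2}|n^{*}|^{\tau/2}$ coming from the small divisors, and that the Diophantine condition on $\alpha$ precludes a second resonance $n'\neq n^{*}$ with $|n'|\leq N$ (this uses $|n^{*}-n'|\gtrsim \varepsilon_0^{-\sigma/\tau}$ versus the choice of $N$). A secondary obstacle is the gain from $\varepsilon_0^{2-2\sigma}$ (the naive quadratic gain) up to $\varepsilon_0^{3-2\sigma}$ in case (A), which requires exploiting the cancellation of $\widehat{f}(0)$ into $A_+$ rather than leaving it inside $e^{f_+}$, together with a careful second-order BCH accounting.
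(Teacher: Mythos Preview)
Your overall strategy and the formula $B = P \circ e^{Y} \circ R_{\langle n^{*},\theta\rangle/2}$ match the paper's exactly, and your identification of the key ingredients in case (B)---uniqueness of $n^{*}$, the lower bound $|\rho|\gtrsim \gamma|n^{*}|^{-\tau}$ controlling $\|P\|$, the Fourier-decay factor $e^{-2\pi|n^{*}|r}$ in $\nu_{+}$---is correct.

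However, your prose in case (B) inverts the order of the three conjugations, and this is not cosmetic. You propose to rotate first by $R_{\langle n^{*},\theta\rangle/2}$, then diagonalize by a constant $P$, then run the non-resonant elimination. That order fails: the conjugate $R_{-\langle n^{*},(\theta+\alpha)\rangle/2}\,A\,R_{\langle n^{*},\theta\rangle/2}$ is genuinely $\theta$-dependent whenever $A\notin\mathrm{SO}(2,\mathbb{R})$, so there is no constant part left for a constant $P$ to diagonalize, and the KAM step loses its anchor. The paper's order---which is also what your formula $B=Pe^{Y}Q$ actually encodes, since conjugation by the leftmost factor acts first---is: (i) diagonalize $A$ by $P$ to a rotation $A'=R_{\rho}$; (ii) eliminate non-resonant modes from $P^{-1}fP$ via $e^{Y}$, leaving $A'e^{g^{\mathrm{re}}}$ whose only low-frequency content is the $n^{*}$-mode; (iii) rotate by $Q=R_{\langle n^{*},\theta\rangle/2}$. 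Because $A'$ is now a rotation, $Q(\theta+\alpha)^{-1}A'Q(\theta)=R_{\rho-\langle n^{*},\alpha\rangle/2}$ is again constant, and the $n^{*}$-Fourier mode of $g^{\mathrm{re}}$ becomes the constant off-diagonal entry that yields $\nu_{+}$. Step (ii) is not carried out by solving the cohomological equation directly as you describe but by invoking an abstract elimination lemma (the paper's Lemma~\ref{imp}, from \cite{Cai2}) that returns $|g^{\mathrm{re}}|_{r}\leq 2|g|_{r}$ in one stroke; this replaces your second-order BCH bookkeeping.
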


\begin{remark}
	{\rm Proposition \ref{pr1}} has been proved in \cite{Cai2} essentially, however, the arguments of {\rm (\ref{B})} and {\rm (\ref{A+}) }are new, so we will give a brief proof.  Moreover, we can choose for example $D=1000$.
\end{remark}

\begin{proof}
	It is enough to deal with the resonant case. Combine the resonant condition $\|2\rho-\langle n^{*},\alpha\rangle\|_{\mathbb{R}/\mathbb{Z}}<\varepsilon_{0}^{\sigma}$ with  $\|\langle n^{*},\alpha\rangle\|_{\mathbb{R}/\mathbb{Z}}\geq \frac{\gamma }{|n^{*}|^{\tau}}$, one can get that
	\begin{equation*}
	\frac{\gamma}{|n^{*}|^{\tau}}\leq \|\langle n^{*},\alpha\rangle\|_{\mathbb{R}/\mathbb{Z}} \leq \varepsilon_{0}^{\sigma}+2|\rho|\leq \frac{\gamma}{2|n^{*}|^{\tau}} +2|\rho|,
	\end{equation*}
	hence $|\rho|\geq \frac{\gamma}{4|n^{*}|^{\tau}}$. We only consider the elliptic case since it belongs to the non-resonant case if  $\rho\in i\mathbb{R}$.
	By Lemma 8.1 in \cite{Hou}, one can find constant matrix $P\in {\rm SL}(2,\mathbb{R})$ to diagonalize $A$, i.e.,
	\begin{equation*}
	P^{-1}AP=M^{-1}\exp\begin{pmatrix} {2\pi i \rho} & 0\\ 0 & {-2\pi i \rho} \end{pmatrix}M:=A'\in {\rm SO}(2,\mathbb{R}).
	\end{equation*}
	with
	\begin{equation*}
	\|P\|\leq \sqrt{\frac{2\|A\|}{|\rho|}}\leq 2\sqrt{2}\|A\|^{\frac{1}{2}}\gamma^{-\frac{1}{2}} |n^{*}|^{\frac{\tau}{2}}.
	\end{equation*}
	Denote $g(\theta) = P^{-1}f(\theta)P\in C^{\omega}_{r}(\mathbb{T}^{d},{\rm sl}(2,\mathbb{R}))$, we have
	\begin{equation*}
	|g|_{r}\leq \|P\|^{2} |f|_{r} \leq 8\|A\|\gamma^{-1} |n^{*}|^{\tau}\varepsilon_{0}.
	\end{equation*}
	
	Let $\mathfrak{B}_{r}:=\{f\in C^{\omega}_{r}(\mathbb{T}^{d},{\rm sl}(2,\mathbb{R})), |f|_{r}<\infty \}$.
	For given $\eta>0$, $\alpha\in \mathbb{R}^{d}$ and $A\in {\rm SL}(2,\mathbb{R})$, we introduce a decomposition $\mathfrak{B}_{r}=\mathfrak{B}_{r}^{nre}(\eta)\oplus\mathfrak{B}_{r}^{re}(\eta)$ such that for any $Y \in \mathfrak{B}_{r}^{nre}(\eta)$,
	\begin{equation}
	A^{-1}Y(\theta+\alpha)A\in \mathfrak{B}_{r}^{nre}(\eta),\ \ \ |A^{-1}Y(\theta+\alpha)A-Y(\theta)|_{r}\geq \eta |Y|_{r}. \label{decom}
	\end{equation}
	Now we define
	\begin{equation*}
	\begin{split}
	\Lambda_{1}(\varepsilon_{0}^{\sigma}) &:=\{n\in \mathbb{Z}^{d}: \|\langle n,\alpha\rangle\|_{\mathbb{R}/\mathbb{Z}} \geq \varepsilon_{0}^{\sigma}\}, \\
	\Lambda_{2}(\varepsilon_{0}^{\sigma}) &:=\{n\in \mathbb{Z}^{d}: \|2\rho-\langle n,\alpha\rangle\|_{\mathbb{R}/\mathbb{Z}} \geq \varepsilon_{0}^{\sigma}\},
	\end{split}
	\end{equation*}
	and let $\eta=\varepsilon_{0}^{\sigma}$, one can decompose $\mathfrak{B}_{r}=\mathfrak{B}_{r}^{nre}(\varepsilon_{0}^{\sigma})\oplus\mathfrak{B}_{r}^{re}(\varepsilon_{0}^{\sigma})$ as  in (\ref{decom}) with $A$ substituted by $A'$. A simple calculation yields that $\forall \ Y\in \mathfrak{B}^{nre}_{r}(\varepsilon_{0}^{\sigma})$, $MY(\theta)M^{-1}$ takes the form
	\begin{equation*}
	\sum_{n\in \Lambda_{1}(\varepsilon_{0}^{\sigma})}\begin{pmatrix} i\widehat{t}(n) & 0 \\0 & -i\widehat{t}(n) \end{pmatrix}e^{2\pi i \langle n,\theta \rangle}  + \sum_{n\in \Lambda_{2}(\varepsilon_{0}^{\sigma})} \begin{pmatrix} 0 &\widehat{\nu}(n)e^{2\pi i \langle n,\theta \rangle}\\ \overline{\widehat{\nu}(n)}e^{-2\pi i \langle n,\theta \rangle} &0\end{pmatrix}.
	\end{equation*}	
	In order to eliminate all the non-resonant terms, we recall the following crucial lemma.

	\begin{lemma}[\cite{Cai2}] \label{imp}
		Assume that $A\in {\rm SU}(1,1)$, $\varepsilon_{1}\leq (4\|A\|)^{-4}$, and $\eta \geq 13\|A\|^{2}\varepsilon_{1}^{\frac{1}{2}}$. For any $g\in \mathfrak{B}_{r}$ with $|g|_{r}\leq \varepsilon_{1}$, there exist $Y\in \mathfrak{B}_{r}$ and $g^{re}\in \mathfrak{B}_{r}^{re}(\eta)$ such that
		\begin{equation*}
		e^{Y(\theta+\alpha)}Ae^{g(\theta)}e^{-Y(\theta)}  = Ae^{g^{re}(\theta)},
		\end{equation*}
		with $|Y|_{r}\leq \varepsilon_{1}^{\frac{1}{2}}$, and $|g^{re}|_{r}\leq 2\varepsilon_{1}$.
	\end{lemma}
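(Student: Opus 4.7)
The plan is to perform one KAM-style conjugation step that kills the non-resonant component of $g$ and leaves a purely resonant remainder. Writing $e^{Y(\theta+\alpha)}Ae^{g(\theta)}e^{-Y(\theta)} = A e^{A^{-1}Y(\theta+\alpha)A}e^{g(\theta)}e^{-Y(\theta)}$ and applying the Baker--Campbell--Hausdorff formula, the desired identity reduces to
\begin{equation*}
g^{re}(\theta) = g(\theta) - \bigl(Y(\theta) - A^{-1}Y(\theta+\alpha)A\bigr) + R(Y,g)(\theta),
\end{equation*}
where the nonlinear BCH remainder satisfies $|R(Y,g)|_{r} \leq C\|A\|^{2}(|Y|_{r}+|g|_{r})^{2}\max(|Y|_{r},|g|_{r})$ whenever both $|Y|_{r}$ and $|g|_{r}$ are small compared to $(4\|A\|)^{-2}$. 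This is the single scalar identity I need to solve for the pair $(Y, g^{re})$.

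Let $\mathcal{L}Y := Y(\theta) - A^{-1}Y(\theta+\alpha)A$. The defining property (\ref{decom}) of $\mathfrak{B}_{r}^{nre}(\eta)$ says exactly that $\mathcal{L}$ preserves this subspace and obeys $|\mathcal{L}Y|_{r} \geq \eta |Y|_{r}$, so $\mathcal{L}^{-1}$ exists on $\mathfrak{B}_{r}^{nre}(\eta)$ with operator norm at most $\eta^{-1}$. Decomposing $g = g^{nre} + g^{re}_{0}$ according to $\mathfrak{B}_{r} = \mathfrak{B}_{r}^{nre}(\eta)\oplus \mathfrak{B}_{r}^{re}(\eta)$, both parts bounded by $\varepsilon_{1}$, and projecting the displayed identity onto the two summands yields the coupled system
\begin{equation*}
\mathcal{L}Y = g^{nre} + \pi^{nre} R(Y,g), \qquad g^{re} = g^{re}_{0} + \pi^{re} R(Y,g).
\end{equation*}
Thus it suffices to solve the first equation for $Y \in \mathfrak{B}_{r}^{nre}(\eta)$, after which the second equation simply defines $g^{re}$.

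I would apply the Banach contraction principle to $\Phi(Y) := \mathcal{L}^{-1}\bigl(g^{nre} + \pi^{nre}R(Y,g)\bigr)$ on the ball $\{|Y|_{r} \leq \varepsilon_{1}^{1/2}\} \cap \mathfrak{B}_{r}^{nre}(\eta)$. Using $\|\mathcal{L}^{-1}\| \leq \eta^{-1}$, the bound $|R(Y,g)|_{r} \leq C\|A\|^{2}\varepsilon_{1}^{3/2}$ on this ball, and the hypotheses $\varepsilon_{1} \leq (4\|A\|)^{-4}$ and $\eta \geq 13\|A\|^{2}\varepsilon_{1}^{1/2}$, one checks that both $|\Phi(Y)|_{r} \leq \varepsilon_{1}^{1/2}$ and $|\Phi(Y_{1}) - \Phi(Y_{2})|_{r} \leq \tfrac{1}{2}|Y_{1}-Y_{2}|_{r}$ hold, producing a unique fixed point $Y$. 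Substituting back then gives $|g^{re}|_{r} \leq |g^{re}_{0}|_{r} + |R(Y,g)|_{r} \leq \varepsilon_{1} + O(\|A\|^{2}\varepsilon_{1}^{3/2}) \leq 2\varepsilon_{1}$. The main subtlety, and essentially the only delicate point, is the calibration of constants: the thresholds $(4\|A\|)^{-4}$ and $13\|A\|^{2}$ in the hypotheses are exactly what is needed so that the self-map condition and the contraction condition hold simultaneously with the prescribed radius $\varepsilon_{1}^{1/2}$, which forces a careful bookkeeping of the BCH remainder.
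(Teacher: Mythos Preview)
Your overall strategy---reduce to a fixed-point problem for $Y$ by inverting the linearized operator $\mathcal{L}Y = Y(\theta)-A^{-1}Y(\theta+\alpha)A$ on the non-resonant subspace---is exactly the implicit-function-theorem argument used in \cite{Cai2} (and, before that, in \cite{Hou}). The structure of the proof is right.

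There is, however, a concrete error in your remainder bound. The BCH remainder $R(Y,g)$ is \emph{quadratic}, not cubic: the commutator $\tfrac{1}{2}[A^{-1}Y(\theta+\alpha)A,\,-Y(\theta)]$ already contributes a term of size $\|A\|^{2}|Y|_{r}^{2}$. On your ball $|Y|_{r}\le\varepsilon_{1}^{1/2}$ this is of order $\|A\|^{2}\varepsilon_{1}$, not $\|A\|^{2}\varepsilon_{1}^{3/2}$. Consequently your final estimate $|g^{re}|_{r}\le |g^{re}_{0}|_{r}+|R|_{r}\le \varepsilon_{1}+O(\|A\|^{2}\varepsilon_{1}^{3/2})\le 2\varepsilon_{1}$ does not follow; the correct bound gives $\varepsilon_{1}+C\|A\|^{2}\varepsilon_{1}$, which can be much larger than $2\varepsilon_{1}$.

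The fix is to run the contraction on the \emph{smaller} ball $|Y|_{r}\le 2\eta^{-1}\varepsilon_{1}\le \tfrac{2}{13\|A\|^{2}}\varepsilon_{1}^{1/2}$ rather than on $|Y|_{r}\le\varepsilon_{1}^{1/2}$. There the dominant quadratic term satisfies $\|A\|^{2}|Y|_{r}^{2}\le \tfrac{4}{169\|A\|^{2}}\varepsilon_{1}\ll\varepsilon_{1}$, the self-map and contraction conditions are immediate, and the fixed point then automatically satisfies both $|Y|_{r}\le\varepsilon_{1}^{1/2}$ and $|g^{re}|_{r}\le 2\varepsilon_{1}$. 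Equivalently, one can iterate the linearized step (Newton scheme): at each stage $|Y_{j}|_{r}\le\eta^{-1}|g_{j}^{nre}|_{r}$, so the quadratic error is controlled by $\|A\|^{2}(\eta^{-1}|g_{j}|_{r})^{2}$, and the hypothesis $\eta\ge 13\|A\|^{2}\varepsilon_{1}^{1/2}$ is precisely what makes this smaller than $|g_{j}|_{r}$ and forces convergence. This is how the argument in \cite{Cai2} is actually organized.
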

	\begin{remark}
		Note that {\rm Lemma \ref{imp}} is stated for $\rm SU(1,1)$ and $\rm su(1,1)$, it also works for ${\rm SL}(2,\mathbb{R})$ and ${\rm sl}(2,\mathbb{R})$ since they are isomorphic to $\rm SU(1,1)$ and $\rm su(1,1)$ respectively.
	\end{remark}
	
	According to (\ref{ep0}), one can check that $\varepsilon_{0}^{\sigma}\geq 13\|A'\|^{2} (8\|A\|\gamma^{-1}N^{\tau} \varepsilon_{0})^{\frac{1}{2}}$. By Lemma \ref{imp}, one can find $Y\in \mathfrak{B}_{r}$ and $g^{re}\in \mathfrak{B}_{r}^{re}(\varepsilon_{0}^{\sigma})$ such that
	\begin{equation*}
	e^{-Y(\theta+\alpha)}A'e^{g(\theta)}e^{Y(\theta)} =A'e^{g^{re}(\theta)},
	\end{equation*}
	with estimates
	\begin{equation}\label{normgre}
	|Y|_{r_{+}}\leq 2\sqrt{2}\|A\|^{\frac{1}{2}}\gamma^{-\frac{1}{2}}|n^{*}|^{\frac{\tau}{2}} \varepsilon_{0}^{\frac{1}{2}},\ \ |g^{re}|_{r}\leq 16\|A\|\gamma^{-1} |n^{*}|^{\tau}\varepsilon_{0} \leq \varepsilon_{0}^{1-\frac{\sigma}{5}}.
	\end{equation}
	Recall that in \cite{Cai2}, by $\alpha\in {\rm DC}_{d}(\gamma,\tau)$ we have that $n^{*}$ is the unique resonant site satisfying $0<|n^{*}|\leq N$. Moreover,
	\begin{equation*}
	\begin{split}
	&\Lambda_{1}^{c}(\varepsilon_{0}^{\sigma})\cap \{n\in \mathbb{Z}^{d}:|n|\leq \gamma^{\frac{1}{\tau}}\varepsilon_{0}^{-\frac{\sigma}{\tau}} \} =\{0\}, \\
	&\Lambda_{2}^{c}(\varepsilon_{0}^{\sigma})\cap \{n\in \mathbb{Z}^{d}:|n|\leq 2^{-\frac{1}{\tau}}\gamma^{\frac{1}{\tau}}\varepsilon_{0}^{-\frac{\sigma}{\tau}} \} =\{n^{*}\}.
	\end{split}
	\end{equation*}
    Since $2^{-\frac{1}{\tau}}\gamma^{\frac{1}{\tau}}\varepsilon_{0}^{-\frac{\sigma}{\tau}}-N \gg N$, then $g^{re} \in \mathfrak{B}_{r}^{re}(\varepsilon_{0}^{\sigma})$ can be rewritten as
	\begin{equation*}
	\begin{split}
	g^{re} & = g_{0}^{re} + g_{1}^{re}(\theta) + g_{2}^{re}(\theta)\\
	& = M^{-1}\begin{pmatrix} i\widehat{t}(0) & 0 \\0 & -i\widehat{t}(0) \end{pmatrix} M + M^{-1}\begin{pmatrix} 0 &\widehat{\nu}(n_{*})e^{2\pi i \langle n_{*},\theta \rangle}\\ \overline{\widehat{\nu}(n_{*})}e^{-2\pi i \langle n_{*},\theta \rangle} &0\end{pmatrix} M\\
	& \ \ \ + M^{-1}\sum_{|n|>N^{\prime}} \widehat{g}^{re}(n)e^{2\pi i \langle n,\theta \rangle}M,
	\end{split}
	\end{equation*}
	where $N'=2^{-\frac{1}{\tau}}\gamma^{\frac{1}{\tau}}\varepsilon_{0}^{-\frac{\sigma}{\tau}}-N$.
	In the spirit of Hou-You \cite{Hou}, we perform a conjugation of rotation so that $g_1^{re}(\theta)$ becomes independent of $\theta$. Let $Q(\theta) = R_{\frac{\langle n^{*},\theta \rangle}{2}}$,  we have
	\begin{equation*}
	Q(\theta+\alpha)^{-1}A^{\prime} e^{g^{re}(\theta)}Q(\theta) = \tilde{A}e^{\tilde{g}(\theta)},
	\end{equation*}
	where
	\begin{equation*}
	\tilde{A} = Q(\theta+\alpha)^{-1}A^{\prime}Q(\theta) = M^{-1}\exp \begin{pmatrix} {2\pi i (\rho-\frac{\langle n^{*},\alpha \rangle}{2}}) & 0\\ 0 & -2\pi i (\rho-\frac{\langle n^{*},\alpha \rangle}{2}) \end{pmatrix}M,
	\end{equation*}
	and
	\begin{equation*}
	\begin{split}
	\tilde{g}(\theta) &= Q(\theta)^{-1}g^{re}(\theta)Q(\theta) \\
	&=M^{-1}\begin{pmatrix} i\widehat{t}(0) & 0 \\0 & -i\widehat{t}(0) \end{pmatrix} M + M^{-1}\begin{pmatrix} 0 &\widehat{\nu}(n^{*})\\ \overline{\widehat{\nu}(n^{*})} &0\end{pmatrix} M\\
	& \ \ \ + Q(\theta)^{-1}g_{2}^{re}(\theta) Q(\theta).
	\end{split}
	\end{equation*}
	
	For simplicity, we denote
	\begin{equation}\label{jianji}
	\begin{split}
	S&:=M^{-1}\begin{pmatrix} 2\pi i (\rho-\frac{\langle n^{*},\alpha \rangle}{2}) & 0\\ 0 & -2\pi i (\rho-\frac{\langle n^{*},\alpha \rangle}{2}) \end{pmatrix}M,\\
	L&:=M^{-1}\begin{pmatrix} i\widehat{t}(0) & 0 \\0 & -i\widehat{t}(0) \end{pmatrix} M + M^{-1}\begin{pmatrix} 0 &\widehat{\nu}(n^{*})\\ \overline{\widehat{\nu}(n^{*})} &0\end{pmatrix} M,\\
	F&:=Q(\theta)^{-1}g_{2}^{re}(\theta) Q(\theta).
	\end{split}
	\end{equation}
	Let $B:=P\circ e^{Y}\circ Q\in C^{\omega}_{r_{+}}(2\mathbb{T}^{d},{\rm SL}(2,\mathbb{R}))$, we have the following estimates:
	\begin{align*}
	&|B|_{r_{+}}\leq 4\sqrt{2}\|A\|^{\frac{1}{2}}\gamma^{-\frac{1}{2}}|n^{*}|^{\frac{\tau}{2}}e^{\pi |n^{*}|r_{+}}, \\
	& |F|_{r_{+}}\leq \varepsilon_{0}^{1-\frac{\sigma}{5}} e^{-2\pi N'(r-r_{+})}e^{2\pi N r_{+}} \ll \varepsilon_{0}^{100}.
	\end{align*}
	Also we have $\|S\|\leq 2\pi \varepsilon_{0}^{\sigma}$ and $\|L\|\leq 4\varepsilon_{0}^{1-\frac{\sigma}{5}}$, then
	\begin{equation*}
	\tilde{A}e^{\tilde{g}(\theta)} = e^{S}e^{L+F} =e^{S}(e^{L}+\mathcal{O}(F)) = e^{S}e^{L}\left({\rm Id}+e^{-L}\mathcal{O}(F)\right) :=e^{S}e^{L}e^{f_{+}(\theta)},
	\end{equation*}
	where $f_{+} = \ln ({\rm Id}+e^{-L}\mathcal{O}(F))$ with estimate
	\begin{equation*}
	|f_{+}|_{r_{+}} \leq 2|F|_{r_{+}} \leq \varepsilon_{0}^{100}.
	\end{equation*}
	This finshes the proof for the argument (\ref{B}).

Recall the Baker-Campbell-Hausdorff Formula, i.e.
	\begin{equation*}
	\ln (e^{X}e^{Y}) = X+Y+\frac{1}{2}[X,Y]+\frac{1}{12}([X,[X,Y]]+[Y,[Y,X]]) + \cdots
	\end{equation*}
	where $[X,Y] = XY-YX$ denotes the Lie Bracket and $\cdots$ stands for the higher order terms. Define $A_{+}:=e^{S}e^{L}=e^{A''}$, one can check that
	\begin{equation*}
	A''= S+L+ \frac{1}{2}[S,L]+\frac{1}{12}([S,[S,L]]+[L,[L,S]]) + \cdots
	\end{equation*}
	Moreover, let $\{e^{2\pi i \rho_{+}}, e^{-2\pi i \rho_{+}}\}$ be the two eigenvalues of $A_{+}$, then
	\begin{align*}
	&\|A''\| \leq \|S\|+\|L\|+2\|S\|\cdot \|L\|\leq 8\varepsilon_{0}^{\sigma},\\
	&|\rho_{+}|\leq (2\pi)^{-1}\|A''\| \leq 2\varepsilon_{0}^{\sigma}.
	\end{align*}
	Since $A''\in {\rm sl}(2,\mathbb{R})$, it can be written as $A''=2\pi M^{-1}\begin{pmatrix}
	it_{+} & \nu_{+} \\\overline{\nu_{+}} & - it_{+}
	\end{pmatrix}M$. By Baker-Campbell-Hausdorff Formula and the decay of Fourier coefficients, as well as estimates (\ref{normgre}) and (\ref{jianji}), it follows that
	\begin{align*}
	&|\nu_{+}|\leq  \frac{1}{2\pi}|\widehat{\nu}(n^{*})| + |\widehat{\nu}(n^{*})|\varepsilon_{0}^{\sigma} \leq \frac{1}{4}|\widehat{\nu}(n^{*})| \leq 4\|A\|\gamma^{-1} |n^{*}|^{\tau}\varepsilon_{0} e^{-2\pi |n^{*}|r},\\
	&|t_{+}| \leq \frac{1}{2} \varepsilon_{0}^{\sigma} +\frac{1}{2\pi}|\widehat{t}(0)|+ \frac{1}{2} |\widehat{\nu}(n^{*})|\varepsilon_{0}^{\sigma} \leq \frac{3}{5} \varepsilon_{0}^{\sigma}.
	\end{align*}
This finishes the proof.
\end{proof}

With Proposition \ref{pr1} in hand, we can apply it inductively to the approximating sequence of analytic cocycles and bring the estimates back to $C^k$ cocycles by analytic approximation. We shall formulate the $C^k$ almost reducibility in the following subsection.

\subsection{Differentiable quantitative almost reducibility}
Let $\{f_{j}\}_{j\geq 1}, f_{j}\in C^{\omega}_{\frac{1}{j}}(\mathbb{T}^{d},{\rm sl}(2,\mathbb{R}))$ be the analytic sequence approximating $f\in C^{k}(\mathbb{T}^{d},{\rm sl}(2,\mathbb{R}))$.  We first recall some notations given in \cite{Cai2}. Let $c=c(\gamma,\tau,d)$ and $D$ be the constants defined by Proposition \ref{pr1}, and we denote
\[
\varepsilon_{0}^{\prime}(h,h^{\prime}):=\frac{c}{(2\|A\|)^{D}}(h-h^{\prime})^{D\tau},
\]
and define
\begin{equation*}
\varepsilon_{m}:=\frac{c}{(2\|A\|)^{D}m^{\frac{k}{4}}}.
\end{equation*}
Then one can check that for any $k\geq 5D\tau$  and any $m\geq 10, m\in \mathbb{Z}$,
\begin{equation*}
\frac{c}{(2\|A\|)^{D}m^{\frac{k}{4}}} \leq \varepsilon_{0}^{\prime}(\frac{1}{m},\frac{1}{m^{2}}).
\end{equation*}
Denote $l_{j}=M^{2^{j-1}}$, $\forall \ j\in \mathbb{Z}^{+}$, where $M>\max\{10,\frac{(2\|A\|)^{D}}{c}\}$ is an integer.

\begin{theorem}[\cite{Cai2}]	\label{ar}
	Let $\alpha\in {\rm DC}_{d}(\gamma,\tau)$, $A\in {\rm SL}(2,\mathbb{R})$, $\sigma=\frac{1}{10}$, $f(\theta)\in C^{k}(\mathbb{T}^{d},{\rm sl}(2,\mathbb{R}))$ with $k\geq 5D\tau$. Let $\{f_{j}\}_{j\geq 1}$ be the analytic sequence approximating $f(\theta)$ defined in $(\ref{aa})$. There exists $\varepsilon_{2} = \varepsilon_{2}(\gamma, \tau, d,k,\|A\|)$ such that if $\|f\|_{k}\leq \varepsilon_{2}$, then the following holds:
	
	{\bf {(A)}}
	There exist $B_{l_{j}}(\theta)\in C^{\omega}_{\frac{1}{l_{j+1}}}(2\mathbb{T}^{d}, {\rm SL}(2,\mathbb{R}))$, $A_{l_{j}}\in {\rm SL}(2,\mathbb{R})$ and $f^{\prime}_{l_{j}}(\theta)\in C^{\omega}_{\frac{1}{l_{j+1}}}(\mathbb{T}^{d}, {\rm sl}(2,\mathbb{R}))$ such that
	\begin{equation*}
	B_{l_{j}}(\theta+\alpha)^{-1}Ae^{f_{l_{j}}(\theta)}B_{l_{j}}(\theta) = A_{l_{j}}e^{f_{l_{j}}^{\prime}(\theta)}, \label{ao1}
	\end{equation*}
	with following estimates
	\begin{equation*}
	|B_{l_{j}}(\theta)|_{\frac{1}{l_{j+1}}} \leq \varepsilon_{l_{j}}^{-\frac{2\sigma}{5}}, \
	|f_{l_{j}}^{\prime}(\theta)|_{\frac{1}{l_{j+1}}} \leq \frac{1}{2}\varepsilon_{l_{j}}^{\frac{5}{2}} , \  \|A_{l_{j}}\|\leq 2\|A\|.
	\end{equation*}

	{\bf {(B)}}
	There exists $\tilde{f}_{l_{j}}(\theta)\in C^{k_{0}}(\mathbb{T}^{d},{\rm sl}(2,\mathbb{R}))$ with $k_{0}\leq [\frac{k}{20}]$ such that
	\begin{equation*}
	B_{l_{j}}(\theta+\alpha)^{-1}Ae^{f(\theta)}B_{l_{j}}(\theta) = A_{l_{j}}e^{\tilde{f}_{l_{j}}(\theta)}, \label{ao3}
	\end{equation*}
	with estimate
	\begin{equation*}
	\|\tilde{f}_{l_{j}}(\theta)\|_{k_{0}}\leq \varepsilon_{l_{j}}^{2}. \label{ao4}
	\end{equation*}

	{\bf {(C)}}
Let $\{e^{2\pi i \rho_{j-1}}, e^{-2\pi i \rho_{j-1}}\}$ be the two eigenvalues of $A_{l_{j-1}}$ $(A_{l_{0}} =A \ {\rm if} \ j=1)$. If the $({j})$-th step is obtained by resonant case, i.e.
there exists $n^{*}_{l_{j}}\in \mathbb{Z}^{d}$ such that
\begin{equation}
\|2\rho_{j-1}-\langle n^{*}_{l_{j}},\alpha \rangle\|_{\mathbb{R}/\mathbb{Z}}<\varepsilon_{l_{j}}^{\sigma}, \ \ \ 0<|n^{*}_{l_{j}}|\leq N_{l_{j}}:=\frac{2|\ln \varepsilon_{l_{j}}|}{\frac{1}{l_{j}}-\frac{1}{l_{j+1}}}, \label{gz}
\end{equation}
then $A_{l_{j}}$ has the following form
\begin{equation}
A_{l_{j}} = M^{-1}\exp 2\pi \begin{pmatrix} it_{l_{j}} & \nu_{l_{j}} \\\overline{\nu_{l_{j}}} & - it_{l_{j}}\end{pmatrix}M  \label{ao5}
\end{equation}	
with $|\nu_{l_{j}}|\leq 4\|A\|\gamma^{-1}|n^{*}_{l_{j}}|^{\tau}\varepsilon_{l_{j}} e^{-2\pi \frac{1}{l_{j}}|n^{*}_{l_{j}}|}$. Let  $\{e^{2\pi i \rho_{j}}, e^{-2\pi i \rho_{j}}\}$ be the two eigenvalues of $A_{l_{j}}$, then $|\rho_{j}|\leq 2 \varepsilon^{\sigma}_{l_{j}}$.
\end{theorem}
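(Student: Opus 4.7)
The plan is to build an iterative KAM scheme based on Proposition \ref{pr1}, applied not to $f$ directly but to the analytic approximations $f_{l_j}$ at successively finer strips $|\Im\theta|<1/l_j$, then transfer the estimates back to the $C^k$ category. I would first verify the base smallness at step $j=1$: since $|f_{l_1}|_{1/l_1}\leq C'\|f\|_k\leq C'\varepsilon_2$, the choice of $\varepsilon_m$ together with $k\geq 5D\tau$ guarantees $|f_{l_1}|_{1/l_1}\leq \varepsilon_0'(1/l_1,1/l_2)$, so Proposition \ref{pr1} produces $B_{l_1}$, $A_{l_1}$ and $f_{l_1}'$ with the quantitative bounds claimed in (A) at $j=1$.

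The inductive step is the heart of the argument. Suppose we have constructed $B_{l_{j-1}}$, $A_{l_{j-1}}$, $f_{l_{j-1}}'$ at strip width $1/l_j$. I would conjugate $Ae^{f_{l_j}(\theta)}$ (rather than $Ae^{f_{l_{j-1}}}$) by $B_{l_{j-1}}$ and use the telescoping bound
\begin{equation*}
B_{l_{j-1}}(\theta+\alpha)^{-1}Ae^{f_{l_j}(\theta)}B_{l_{j-1}}(\theta)=A_{l_{j-1}}e^{\widehat f_{l_j}(\theta)},
\end{equation*}
where $\widehat f_{l_j}$ differs from $f_{l_{j-1}}'$ by a term controlled via (\ref{aa}) by $C'l_{j-1}^{-k}\|f\|_k$ inflated by $|B_{l_{j-1}}|_{1/l_j}^{2}$. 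Because $k\geq 5D\tau$ and $l_j=l_{j-1}^2$, this difference is dwarfed by $\varepsilon_{l_j}$, so the smallness hypothesis $|\widehat f_{l_j}|_{1/l_j}\leq\varepsilon_0'(1/l_j,1/l_{j+1})$ is met and Proposition \ref{pr1} is applicable once more to produce the $j$-th conjugacy $B^{(j)}$; the full conjugacy is $B_{l_j}=B_{l_{j-1}}\circ B^{(j)}$. The uniform bound $\|A_{l_j}\|\leq 2\|A\|$ is maintained by summing the contributions $\|A_{l_i}-A_{l_{i-1}}\|\lesssim\|A\|\varepsilon_{l_i}$ from non-resonant steps and using formula (\ref{A+}) at resonant steps, where $\|A_{l_j}\|$ stays comparable to $\|A\|$ because $\|A_{l_j}''\|\leq 8\varepsilon_{l_j}^{\sigma}$.

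For (B) I would use that $B_{l_j}$ also conjugates the original $C^k$ cocycle: writing
\begin{equation*}
B_{l_j}(\theta+\alpha)^{-1}Ae^{f(\theta)}B_{l_j}(\theta)=A_{l_j}e^{\tilde f_{l_j}(\theta)},
\end{equation*}
one has $\tilde f_{l_j}-f_{l_j}'$ controlled in $|\cdot|_{1/l_{j+1}}$ by $|B_{l_j}|^2_{1/l_{j+1}}\|f-f_{l_j}\|_0\lesssim\varepsilon_{l_j}^{-4\sigma/5}l_j^{-k}\|f\|_k$. Then Cauchy's estimate converts the analytic bound on a strip of width $1/l_{j+1}$ into a $C^{k_0}$ bound with a loss factor $l_{j+1}^{k_0}$; choosing $k_0\leq [k/20]$ keeps this loss much smaller than $\varepsilon_{l_j}^2$. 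Part (C) is inherited directly from the resonant branch of Proposition \ref{pr1}: the representation (\ref{A+}) applied at the $j$-th step gives (\ref{ao5}), the $\nu_{l_j}$ estimate, and $|\rho_j|\leq 2\varepsilon_{l_j}^\sigma$.

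The main obstacle I anticipate is controlling the compound growth of the conjugacies across the (possibly infinite) resonant steps: each resonant step contributes a factor $|n^*_{l_j}|^{\tau/2}e^{\pi|n^*_{l_j}|/l_{j+1}}$ in $|B^{(j)}|_{1/l_{j+1}}$, and $|n^*_{l_j}|$ is only bounded by $N_{l_j}$ through (\ref{gz}). The careful point is to show that the $(j+1)$-th resonant site satisfies $|n^*_{l_{j+1}}|\gg|n^*_{l_j}|$, which forces a super-geometric gap between consecutive resonances and yields the cumulative bound $|B_{l_j}|_{1/l_{j+1}}\leq\varepsilon_{l_j}^{-2\sigma/5}$; this is essentially a non-resonance argument using $\alpha\in{\rm DC}_d(\gamma,\tau)$ applied to $\|2\rho_j-\langle n,\alpha\rangle\|_{\mathbb{R}/\mathbb{Z}}$, combined with $|\rho_j|\leq 2\varepsilon^\sigma_{l_j}$. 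Once this is in place all estimates close and the three assertions follow.
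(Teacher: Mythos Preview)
Your proposal is correct and mirrors the paper's own approach: the paper defers (A) and (B) to \cite{Cai,Cai2} and only spells out (C), which is exactly the inductive telescoping you describe---conjugate $Ae^{f_{l_j}}$ by $B_{l_{j-1}}$, absorb the approximation error $|f_{l_j}-f_{l_{j-1}}|_{1/l_j}\lesssim l_{j-1}^{-k}\|f\|_k$ inflated by $|B_{l_{j-1}}|_{1/l_j}^{2}$ into a new perturbation of size $\leq\varepsilon_{l_j}$, and read off (\ref{ao5}) and the $|\nu_{l_j}|$, $|\rho_j|$ bounds from the resonant branch of Proposition~\ref{pr1}.

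One small slip to fix in your treatment of (B): you write that $\tilde f_{l_j}-f_{l_j}'$ is controlled in the analytic norm $|\cdot|_{1/l_{j+1}}$, but $\tilde f_{l_j}$ depends on the genuinely $C^k$ function $f$ and is not analytic, so no such norm is available. The correct bookkeeping is to apply Cauchy's estimate separately to the analytic pieces $f_{l_j}'$ and $B_{l_j}$ to pass to $\|\cdot\|_{k_0}$, and to bound $\|f-f_{l_j}\|_{k_0}$ by telescoping $\sum_{m\geq l_j}\|f_{m+1}-f_m\|_{k_0}$ together with Cauchy on each analytic increment; the resulting loss is still $\lesssim l_{j+1}^{k_0}l_j^{-k}\|f\|_k$, so the choice $k_0\leq[k/20]$ closes the estimate exactly as you intend.
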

\begin{remark}
	The arguments {\bf {(A)}} and {\bf {(B)}} have been proved in \cite{Cai} and \cite{Cai2} essentially. However the estimates in {\bf {(C)}} are new, so we will give a brief proof for them.
\end{remark}
\begin{proof}
In order to prove ({\bf C}), we take a quick glimpse at the proof process of ({\bf A}).	Suppose that the argument {\bf {(A)}}  holds for $({j-1})$-th step, i.e.,
	\begin{equation*}
	B_{l_{j-1}}(\theta+\alpha)^{-1}Ae^{f_{l_{j-1}}(\theta)}B_{l_{j-1}}(\theta) = A_{l_{j-1}}e^{f_{l_{j-1}}^{\prime}(\theta)},
	\end{equation*}
	with following estimates
	\begin{equation} \label{sb}
	|B_{l_{j-1}}(\theta)|_{\frac{1}{l_{j}}} \leq \varepsilon_{l_{j-1}}^{-\frac{2\sigma}{5}}, \
	|f_{l_{j-1}}^{\prime}(\theta)|_{\frac{1}{l_{j}}} \leq \frac{1}{2}\varepsilon_{l_{j-1}}^{\frac{5}{2}} , \  \|A_{l_{j-1}}\|\leq 2\|A\|.
	\end{equation}
	Then for the cocycle $(\alpha,Ae^{f_{l_{j}}(\theta)})$, one can directly calculate that
	\begin{equation*}
	B_{l_{j-1}}(\theta+\alpha)^{-1}Ae^{f_{l_{j}}}B_{l_{j-1}} =A_{l_{j-1}}e^{f'_{l_{j-1}}} +B_{l_{j-1}}(\theta+\alpha)^{-1}(Ae^{f_{l_{j}}}-Ae^{f_{l_{j-1}}})B_{l_{j-1}}.
	\end{equation*}
	If we rewrite that
	\begin{equation*}
	A_{l_{j-1}}e^{\bar{f}_{l_{j-1}}(\theta)}:=A_{l_{j-1}}e^{f'_{l_{j-1}}} +B_{l_{j-1}}(\theta+\alpha)^{-1}(Ae^{f_{l_{j}}}-Ae^{f_{l_{j-1}}})B_{l_{j-1}}(\theta),
	\end{equation*}
then by (\ref{sb}), we have
	\begin{equation*}
\begin{split}
|\bar{f}_{l_{j-1}}(\theta)|_{\frac{1}{l_{j}}} &\leq |f_{l_{j-1}}'|_{\frac{1}{l_{j}}}+ \|A_{l_{j-1}}^{-1}\|\cdot|B_{l_{j-1}}(\theta+\alpha)^{-1}(Ae^{f_{l_{j}}}-Ae^{f_{l_{j-1}}})B_{l_{j-1}}|_{\frac{1}{l_{j}}}\\
&\leq \frac{1}{2}\varepsilon_{l_{j-1}}^{\frac{5}{2}}+2\|A\|^{2}\times
\varepsilon_{l_{j-1}}^{-\frac{4\sigma}{5}}\times \frac{c}{(2\|A\|)^{D}l_{j-1}^{k-1}}\\
&\leq \frac{1}{2}\varepsilon_{l_{j}} + \frac{1}{2}\times \frac{c}{(2\|A\|)^{D}l_{j}^{\frac{k}{4}}}\\
&\leq \varepsilon_{l_{j}}.
\end{split}
\end{equation*}

Let us focus on the cocycle $(\alpha, A_{l_{j-1}}e^{\bar{f}_{l_{j-1}}(\theta)})$. Since
$({j})$-th step is obtained by resonant case,
apply Proposition \ref{pr1} and resonant condition (\ref{gz}), $A_{l_{j}}$ in the $({j})$-th step can be written as
\begin{equation*}
A_{l_{j}} = M^{-1}\exp 2\pi \begin{pmatrix}  it_{l_{j}}& \nu_{l_{j}} \\\overline{\nu_{l_{j}}} & - it_{l_{j}}\end{pmatrix}M,
\end{equation*}
with  estimates:
\begin{equation}
|\nu_{l_{j}}|\leq 4\|A\|\gamma^{-1}|n^{*}_{l_{j}}|^{\tau}\varepsilon_{l_{j}} e^{-2\pi \frac{1}{l_{j}}|n^{*}_{l_{j}}|}, \ \ \ |\rho_{j}|\leq 2 \varepsilon^{\sigma}_{l_{j}}.
\end{equation}	
This gives the estimates of ({\bf C}).
\end{proof}
\subsection{Reducibility of $C^{k}$ quasi-periodic cocycle}
With an extra assumption on the rotation number of the initial system, it is possible to prove that almost reducibility leads to reducibility in the sense that the number of resonances is finite. Actually, the quantitative estimates are closely related to the condition of the rotation number, as will be shown in the following.
\begin{theorem}\label{r}
	Let $\alpha\in {\rm DC}_{d}(\gamma,\tau)$, $\sigma = \frac{1}{10}$, $f(\theta)\in C^{k}(\mathbb{T}^{d}, {\rm sl}(2,\mathbb{R}))$ with $k\geq 5D\tau$ and $A\in {\rm SL}(2,\mathbb{R})$. Assume that the cocycle $(\alpha, Ae^{f(\theta)})$ is not uniformly hyperbolic and $2\rho(\alpha, Ae^{f(\theta)}) = \langle m,\alpha \rangle \mod \mathbb{Z}$ for $m\in \mathbb{Z}^{d}\backslash \{0\}$. There exists $\varepsilon_{3}=\varepsilon_{3}(\gamma, \tau,d,k, \|A\|)$ such that if $\|f\|_k\leq \varepsilon_{3}$, then there exists
	$B\in C^{\tilde{k}}(2\mathbb{T}^{d},{\rm SL}(2,\mathbb{R}))$ such that
	\begin{equation*}
	B(\theta+\alpha)^{-1}Ae^{f(\theta)}B(\theta) = \begin{pmatrix} 1&\zeta\\0&1
	\end{pmatrix},
	\end{equation*}
	with estimate  $|\zeta| \leq \varepsilon_{3}^{\frac{1}{3}}|m|^{-\frac{k}{8.5}}$ and $\|B(\theta)\|_{\tilde{k}}\leq D_{1} |m|^{(2\tilde{k}+\tau)}$ with $ \tilde{k}\leq[\frac{k}{400}]$, $D_{1} = D_{1}(\gamma,\tau,d,k,\|A\|)$.
\end{theorem}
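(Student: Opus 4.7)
The plan is to apply Theorem \ref{ar} iteratively, use the rationality of the rotation number together with the Diophantine condition to collapse the iteration into genuine reducibility to a parabolic constant cocycle, and then normalize. Throughout the iteration I would track the rotation number via \eqref{rho}, giving $2\rho(\alpha, A_{l_{j}}) = \langle m - \deg B_{l_{j}}, \alpha\rangle \mod \mathbb{Z}$; the degree is incremented by $n^{*}_{l_{j}}$ at each resonant step (via \eqref{com}) and unchanged at non-resonant ones.

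The key combinatorial point is that essentially only one resonance occurs, at the smallest scale $l_{j^{*}}$ for which $N_{l_{j^{*}}} \geq |m|$, with resonant frequency $n^{*}_{l_{j^{*}}} = m$. For the first resonance, the criterion $\|2\rho_{j-1} - \langle n^{*}_{l_{j}}, \alpha\rangle\|_{\mathbb{R}/\mathbb{Z}} < \varepsilon_{l_{j}}^{\sigma}$ reduces to $\|\langle m - n^{*}_{l_{j}}, \alpha\rangle\|_{\mathbb{R}/\mathbb{Z}} < \varepsilon_{l_{j}}^{\sigma}$, and the Diophantine condition forces either $n^{*}_{l_{j}} = m$ or $|m - n^{*}_{l_{j}}| \geq (\gamma/\varepsilon_{l_{j}}^{\sigma})^{1/\tau}$; taking $j$ minimal selects the first branch. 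After this resonant step, Theorem \ref{ar}(C) gives $|\rho_{j^{*}}| \leq 2\varepsilon_{l_{j^{*}}}^{\sigma}$, and any subsequent resonant $n^{*}_{l_{j'}} \neq 0$ would require $\|\langle n^{*}_{l_{j'}}, \alpha\rangle\|_{\mathbb{R}/\mathbb{Z}} < 4\varepsilon_{l_{j^{*}}}^{\sigma}$, which by Diophantine pushes $|n^{*}_{l_{j'}}|$ above $N_{l_{j'}}$ for the relevant range of $j'$, contradicting the resonance definition.

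With the non-resonant regime prevailing for $j > j^{*}$, the constants $A_{l_{j}}$ converge super-exponentially to some $A_{\infty}$, and the conjugacies $B_{l_{j}}$ converge in $C^{\tilde{k}}$ to some $B_{*} \in C^{\tilde{k}}(2\mathbb{T}^{d}, {\rm SL}(2,\mathbb{R}))$, the Cauchy-estimate cost $(l_{j+1})^{\tilde{k}}$ being absorbed by the decay $|f'_{l_{j}}|_{1/l_{j+1}} \leq \tfrac{1}{2}\varepsilon_{l_{j}}^{5/2}$. The degree identity forces $2\rho(\alpha, A_{\infty}) \equiv 0 \mod \mathbb{Z}$; combined with non-uniform hyperbolicity, the representation \eqref{A+} of $A_{\infty}$ must be parabolic ($t_{\infty}^{2} = |\nu_{\infty}|^{2}$). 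A constant matrix $P \in {\rm SL}(2,\mathbb{R})$ then puts $A_{\infty}$ into $\begin{pmatrix} 1 & \zeta \\ 0 & 1\end{pmatrix}$, and $B := B_{*} \cdot P$ achieves the desired conjugation.

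For the quantitative estimates, $|\zeta|$ is comparable to $|\nu_{\infty}|$, which is essentially $|\nu_{l_{j^{*}}}|$; Theorem \ref{ar}(C) bounds this by $4\|A\|\gamma^{-1} |m|^{\tau} \varepsilon_{l_{j^{*}}} e^{-2\pi |m|/l_{j^{*}}}$, and the calibration $N_{l_{j^{*}}} \sim |m|$ with $\varepsilon_{l_{j^{*}}} \sim l_{j^{*}}^{-k/4}$ yields $|\zeta| \lesssim \varepsilon_{3}^{1/3} |m|^{-k/8.5}$. The $C^{\tilde{k}}$ bound on $B$ is dominated by $B_{l_{j^{*}}}$: estimate \eqref{B} gives an analytic bound of order $|m|^{\tau/2}$ on the strip of width $1/l_{j^{*}+1}$, and the Cauchy estimate together with the $\tilde{k}$-th derivatives of the rotation factor $R_{\langle m, \theta\rangle/2}$ combine to produce the stated $|m|^{2\tilde{k}+\tau}$ scaling. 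The principal technical obstacle is the second paragraph, particularly verifying the existence of the resonance at exactly step $j^{*}$ and the absence of further resonances in the borderline case when $|m|$ sits between consecutive scales $N_{l_{j}}$ and $N_{l_{j+1}}$.
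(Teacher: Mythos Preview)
Your overall strategy—iterate Theorem \ref{ar}, track the degree via \eqref{rho}, reach a parabolic constant, normalize by a rotation—matches the paper's. The genuine gap is your claim that \emph{essentially only one resonance occurs}, with site exactly $m$. This is not established, and in fact the paper does not try to prove it: it allows finitely many resonant steps with sites $n^{*}_{l_{j_{0}}},\dots,n^{*}_{l_{j_{s}}}$ summing to $m$, and instead proves the separation inequality
\[
|n^{*}_{l_{j_{i+1}}}|\ \geq\ \varepsilon_{l_{j_{i}}}^{-\frac{1}{25\tau}}\,|n^{*}_{l_{j_{i}}}|^{2},
\]
which forces $(1-2\varepsilon_{3}^{1/(25\tau)})|n^{*}_{l_{j_{s}}}|\leq |m|\leq (1+2\varepsilon_{3}^{1/(25\tau)})|n^{*}_{l_{j_{s}}}|$ and $\prod_{i}|n^{*}_{l_{j_{i}}}|\leq |n^{*}_{l_{j_{s}}}|^{2}$. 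All estimates are then read off from the \emph{last} resonance. Your argument for ruling out further resonances only works ``for the relevant range of $j'$'': once $N_{l_{j'}}$ outgrows $\varepsilon_{l_{j^{*}}}^{-\sigma/\tau}$ a new resonance can and in general will occur, so the single-resonance picture is not correct. A related slip is the line ``the criterion \dots\ reduces to $\|\langle m-n^{*}_{l_{j}},\alpha\rangle\|_{\mathbb{R}/\mathbb{Z}}<\varepsilon_{l_{j}}^{\sigma}$'': the quantity $\rho_{j-1}$ is the eigenvalue parameter of the constant $A_{l_{j-1}}$, not the fibered rotation number of the cocycle, and before the first resonance these differ by an amount controlled by $\|P\|^{2}\|\tilde f_{l_{j-1}}\|$ (with $P$ the diagonalizer of $A$), which is \emph{not} automatically $<\varepsilon_{l_{j}}^{\sigma}$ at the first resonant scale.

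There is also a difference in the endgame. You argue convergence of $B_{l_{j}}$ and $A_{l_{j}}$ as $j\to\infty$; the paper instead stops at a finite $j'$ with $\rho(\alpha,A_{l_{j'}}e^{\tilde f_{l_{j'}}})=0$ and invokes the black-box Lemma~\ref{sim} to produce $B_{1}$ with $\|B_{1}-{\rm Id}\|_{\tilde k}$ small and $\|H-A_{l_{j'}}\|\leq 4\|A\|\varepsilon_{l_{j'}}^{2}$. This is what lets them compare the off-diagonal entry of $H$ directly to $|\nu_{l_{j_{s}}}|$ from Theorem~\ref{ar}(C) and obtain the bound on $|\zeta|$; your convergence route would have to reprove something equivalent. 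The $\|B\|_{\tilde k}$ bound in the paper likewise comes from multiplying the contributions of \emph{all} resonant steps via \eqref{com}, using $\sum_{i}|n^{*}_{l_{j_{i}}}|\leq \tfrac{3}{2}|m|$ and the product bound above, rather than from a single factor.
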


\begin{remark}
	We are not going to deal with the uniformly hyperbolic cocycle as it is always reducible in our context and has nothing to do with our spectral application \emph(in Schr\"{o}dinger case, it corresponds to energy $E$ lying in the gap\emph). Moreover, $\zeta$ is the key to derive gap estimates.
\end{remark}

\begin{proof}
	 The result can be proved by Theorem \ref{ar} iteratively. Take $\varepsilon_{3}=\varepsilon_{2}$ as in Theorem \ref{ar} and apply it to cocycle $(\alpha, Ae^{f(\theta)})$, then there exist  $B_{l_{j}}(\theta)\in C^{\omega}_{\frac{1}{l_{j+1}}}(2\mathbb{T}^{d},{\rm SL}(2,\mathbb{R}))$, $A_{l_{j}}\in
	 {\rm SL}(2,\mathbb{R})$ and $\tilde{f}_{l_{j}}(\theta)\in C^{k_{0}}(\mathbb{T}^{d},{\rm sl}(2,\mathbb{R}))$ with $k_{0} \leq [\frac{k}{20}]$ such that
	 \begin{equation*}
	 B_{l_{j}}(\theta+\alpha)^{-1}Ae^{f(\theta)}B_{l_{j}}(\theta) = A_{l_{j}}e^{\tilde{f}_{l_{j}}(\theta)}
	 \end{equation*}
	 with $\|\tilde{f}_{l_{j}}(\theta)\|_{k_{0}}\leq \varepsilon_{l_{j}}^{2}$.
	
	 Since $\rho(\alpha, Ae^{f(\theta)})= \frac{\langle m,\alpha \rangle}{2} \mod \frac{\mathbb{Z}}{2}, m\in \mathbb{Z}^{d}\backslash\{0\}$, by (\ref{rho}) we have
	 \begin{equation*}
	 \begin{split}
	 \rho(\alpha, A_{l_{j}}e^{\tilde{f}_{l_{j}}(\theta)}) &= \rho(\alpha, Ae^{f(\theta)})-\frac{\langle \deg B_{l_{j}}, \alpha \rangle}{2} \mod \frac{\mathbb{Z}}{2}\\
	 &=\frac{\langle m-\deg B_{l_{j}}, \alpha \rangle}{2} \mod \frac{\mathbb{Z}}{2}.
	 \end{split}
	 \end{equation*}
	 From now on, we omit ``${\rm mod} \ \frac{\mathbb{Z}}{2}$" for simplicity and recall the following important lemma.
	 \begin{lemma}[\cite{Cai}]
	 	Let $\alpha \in {\rm DC}_{d}(\gamma,\tau)$, $f(\theta) \in C^{k}(\mathbb{T}^{d},{\rm sl}(2,\mathbb{R}))$ with $k\geq 5D\tau$ and $A\in {\rm SL}(2,\mathbb{R})$. Assume that $\rho(\alpha,Ae^{f(\theta)}) = 0$, there exists $T=T(\tau)$ and $\varepsilon_{4}=\varepsilon_{4}(\gamma,\tau,d,k,\|A\|)$ such that if
	 	\begin{equation*}
	 	\|f\|_{k}\leq \varepsilon_{4}\leq T(\tau)\gamma^{11}\varepsilon_{0}'(\frac{1}{l_{1}},\frac{1}{l_{2}}),
	 	\end{equation*}
	 	then there exist $B_{1}\in C^{\tilde{k}}(\mathbb{T}^{d},{\rm SL}(2,\mathbb{R}))$ with $\tilde{k}=[\frac{k}{20}]$ and $H\in {\rm SL}(2,\mathbb{R})$, $\tilde{\delta}=\tilde{\delta}(\gamma,\tau,d,k,\|A\|)>0$ such that
	 	\begin{equation*}
	 	B_{1}(\theta+\alpha)^{-1}A^{f(\theta)}B_{1}(\theta) = H
	 	\end{equation*}
	 	with estimates $\|B_{1}-{\rm Id}\|_{\tilde{k}} \leq \tilde{\delta}$, $\deg B_{1}(\theta)=0$ and $\|A-H\|\leq 4\|A\|\varepsilon_{4}.$
	 	\label{sim}
	 \end{lemma}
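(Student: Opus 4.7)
\textbf{Proof plan for Lemma \ref{sim}.} The plan is to invoke the $C^k$ almost reducibility Theorem \ref{ar} and show that, under the hypothesis $\rho(\alpha, Ae^{f(\theta)}) = 0$ together with the sharp smallness $\|f\|_k \leq T(\tau)\gamma^{11}\varepsilon_0'(\frac{1}{l_1}, \frac{1}{l_2})$, the resonant branch (B) of Proposition \ref{pr1} is \emph{never} triggered at any step $j$ of the KAM scheme. Once this is established, the non-resonant estimates of Theorem \ref{ar}(A) are summable and yield convergence of $\{B_{l_j}\}$ and $\{A_{l_j}\}$ in the appropriate topology to the desired $B_1 \in C^{\tilde k}$ and $H \in {\rm SL}(2,\mathbb{R})$, with $\deg B_1 = 0$ automatic (each non-resonant conjugacy is a small perturbation of the identity).

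First I would control the initial eigenphase. Continuity of the fibered rotation number under $C^0$ perturbations gives $|\rho(\alpha, A)| \leq C \|A\| \|f\|_k$, from the hypothesis $\rho(\alpha, Ae^{f(\theta)})=0$. Then I induct on $j$: assume only non-resonant steps occurred through step $j-1$, so $\deg B_{l_{j-1}} = 0$, and by the conjugation law (\ref{rho}) we still have $\rho(\alpha, A_{l_{j-1}} e^{\tilde f_{l_{j-1}}}) = \rho(\alpha, Ae^{f(\theta)}) = 0$. Combining this with Theorem \ref{ar}(B) (which gives $\|\tilde f_{l_{j-1}}\|_{k_0} \leq \varepsilon_{l_{j-1}}^2$) and Lemma \ref{eig} yields that the eigenphase $\rho_{j-1}$ of the constant matrix $A_{l_{j-1}}$ satisfies $|\rho_{j-1}| \leq C \varepsilon_{l_{j-1}}^2$.

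The crux is to exclude resonance at step $j$. A resonance would require some $0 < |n^*_{l_j}| \leq N_{l_j}$ with $\|2\rho_{j-1} - \langle n^*_{l_j}, \alpha\rangle\|_{\mathbb{R}/\mathbb{Z}} < \varepsilon_{l_j}^{\sigma}$; triangle inequality together with $\alpha \in {\rm DC}_d(\gamma, \tau)$ would then force
$$\frac{\gamma}{N_{l_j}^{\tau}} \leq \|\langle n^*_{l_j}, \alpha\rangle\|_{\mathbb{R}/\mathbb{Z}} \leq 2|\rho_{j-1}| + \varepsilon_{l_j}^{\sigma} \leq 2C\varepsilon_{l_{j-1}}^{2} + \varepsilon_{l_j}^{\sigma}.$$
The threshold $T(\tau)\gamma^{11}\varepsilon_0'(\frac{1}{l_1},\frac{1}{l_2})$ on $\|f\|_k$ is tailored precisely so that the right-hand side is strictly smaller than the left-hand side for every $j \geq 1$ (the high power $\gamma^{11}$ absorbs the cumulative $\gamma$-losses from $N_{l_j}^{\tau}$, from $\varepsilon_{l_{j-1}}^{2}$ after two iterations, and from the propagation constants in Proposition \ref{pr1}). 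This contradiction closes the induction. With only non-resonant steps, Theorem \ref{ar}(A) gives $\|A_{l_j} - A_{l_{j-1}}\| \leq 2\|A\|\varepsilon_{l_{j-1}}$, which telescopes to $\|H - A\| \leq 4\|A\|\varepsilon_4$, and the product of the $B_{l_j}$-increments converges in $C^{\tilde k}$ (with $\tilde k = [\frac{k}{20}]$) via interpolation with the analytic approximation (\ref{aa}), yielding $\|B_1 - \mathrm{Id}\|_{\tilde k} \leq \tilde\delta$.

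The main obstacle is the delicate bookkeeping in the exclusion step: chasing how the bound $|\rho_{j-1}| = \mathcal{O}(\varepsilon_{l_{j-1}}^{2})$, the resonance width $\varepsilon_{l_j}^{\sigma}$, and the Diophantine small divisor $\gamma/N_{l_j}^{\tau}$ (with $N_{l_j}\sim(\frac{1}{l_j}-\frac{1}{l_{j+1}})^{-1}|\ln\varepsilon_{l_j}|$) combine to pinpoint precisely the power of $\gamma$ — here $\gamma^{11}$ — that must appear in the smallness threshold for $\|f\|_k$. Once this calibration is fixed, the remaining convergence of $B_{l_j} \to B_1$ in $C^{\tilde k}$ is a routine interpolation between the analytic-scale bounds $|B_{l_j}|_{1/l_{j+1}} \leq \varepsilon_{l_j}^{-2\sigma/5}$ and the Zehnder approximation (\ref{aa}), and similarly for the degree and norm estimates.
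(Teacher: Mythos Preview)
The paper does not supply a proof of this lemma; it is quoted verbatim from \cite{Cai} and used as a black box inside the proof of Theorem~\ref{r}. So there is no ``paper's own proof'' here to compare against. That said, your outline is the standard route and almost certainly coincides with what \cite{Cai} does: use the $C^{k}$ almost-reducibility scheme, exploit $\rho=0$ together with the Diophantine condition to rule out every resonance, and pass to the limit via the summable non-resonant estimates.

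One technical point deserves tightening. Your claim that Lemma~\ref{eig} directly yields $|\rho_{j-1}|\le C\varepsilon_{l_{j-1}}^{2}$ is too optimistic as written: Lemma~\ref{eig} only compares a cocycle to a \emph{rotation} matrix, whereas $A_{l_{j-1}}$ is merely an element of ${\rm SL}(2,\mathbb{R})$ with $\|A_{l_{j-1}}\|\le 2\|A\|$. To extract its eigenphase you must first conjugate by the diagonalizing matrix $P$ with $\|P\|^{2}\lesssim \|A_{l_{j-1}}\|/|\rho_{j-1}|$ (exactly as in the proof of Proposition~\ref{pr1}); feeding this back in gives $|\rho_{j-1}|^{2}\lesssim \|A\|\,\varepsilon_{l_{j-1}}^{2}$, i.e.\ only $|\rho_{j-1}|\lesssim \sqrt{\|A\|}\,\varepsilon_{l_{j-1}}$. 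This weaker bound is still more than sufficient for the exclusion inequality
\[
2|\rho_{j-1}|+\varepsilon_{l_{j}}^{\sigma}<\frac{\gamma}{N_{l_{j}}^{\tau}},
\]
since $\varepsilon_{l_{j-1}}\ll \varepsilon_{l_{j}}^{\sigma}$ and the KAM smallness already guarantees $\gamma/N_{l_{j}}^{\tau}>\varepsilon_{l_{j}}^{\sigma}$ with margin; the square-root loss (together with its counterpart at the initial step $j=1$) is precisely one of the places where the high power $\gamma^{11}$ in the threshold $T(\tau)\gamma^{11}\varepsilon_{0}'(\tfrac{1}{l_{1}},\tfrac{1}{l_{2}})$ is spent. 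With this adjustment your argument goes through.
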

	
	 Let us analyze the structure of resonances firstly and
	 assume that there exist at least two resonant steps,  say the $({j_{i}})$-th and $({j_{i+1}})$-th, in the almost reducibility procedure.
	 On one hand,
	 for the $({j_{i+1}})$-th step, by using the resonant condition $\| 2\rho_{{j_{i+1}-1}}-\langle n_{l_{j_{i+1}}}^{*}, \alpha \rangle\|_{\mathbb{R}/\mathbb{Z}} \leq \varepsilon_{l_{j_{i+1}}}^{\sigma}$ and the Diophantine condition $\|\langle n^{*}_{l_{j_{i+1}}},\alpha \rangle \|_{\mathbb{R}/\mathbb{Z}}\geq \gamma |n^{*}_{l_{j_{i+1}}}|^{-\tau}$, one can get $|\rho_{{j_{i+1}-1}}|\geq \frac{1}{3}\gamma |n_{l_{j_{i+1}}}^{*}|^{-\tau }$.
	 On the other hand, by Theorem \ref{ar},   we have $|\rho_{{j_{i}}}|\leq  2\varepsilon_{l_{j_{i}}}^{\sigma}$ after the $({j_{i}})$-th step.
	 Then $|\rho_{{j_{i+1}-1}}|\leq 4 \varepsilon_{l_{j_{i}}}^{\sigma} \leq  \frac{1}{3}\gamma |n_{l_{j_{i}}}^{*}|^{-2\tau } \varepsilon_{l_{j_{i}}}^{\frac{1}{25}}$. Thus
	 \begin{equation}
	 |n_{l_{j_{i+1}}}^{*}| \geq \varepsilon_{l_{j_{i}}}^{-\frac{1}{25\tau}} |n_{l_{j_{i}}}^{*}|^{2}.
	 \label{bili}
	 \end{equation}
	
	 Recall that $\deg B_{l_{j+1}} = \deg B_{l_{j}} + n_{l_{j+1}}^{*}$ if $({j+1})$-th is obtained by resonant case. By (\ref{bili}), it follows that there are at most finitely many resonant steps before $\rho(\alpha, A_{l_{j}}e^{\tilde{f}_{l_{j}}(\theta)})=0$. Once we get $\rho(\alpha, A_{l_{j}}e^{\tilde{f}_{l_{j}}(\theta)})=0$ in $({j})$-th step, the conjugacies that after $({j})$-th step will be close to the identity (i.e. non-resonant steps, and the rotation number remains zero by (\ref{rho})). Hence by Theorem \ref{ar} one can choose the smallest $j'>j$ such that
	 \begin{equation*}
	 \|\tilde{f}_{l_{j'}}(\theta)\|_{k_{0}}\leq \varepsilon_{l_{j'}}^{2}\leq T(\tau)\gamma^{11}\varepsilon_{0}'(\frac{1}{l_{1}},\frac{1}{l_{2}}).
	 \end{equation*}
	 Applying Lemma \ref{sim} to the cocycle $(\alpha, A_{l_{j'}}e^{\tilde{f}_{l_{j'}}(\theta)})$,  there exist $H\in {\rm SL}(2,\mathbb{R})$ and $B_{1}\in C^{\tilde{k}}(\mathbb{T}^{d},{\rm SL}(2,\mathbb{R}))$ with $\tilde{k}=[\frac{k_{0}}{20}]$ such that
	 \begin{equation*}
	 B_{1}(\theta+\alpha)^{-1}A_{l_{j'}}e^{\tilde{f}_{l_{j'}}(\theta)}B_{1}(\theta) = H
	 \end{equation*}
	 with $\|A_{l_{j'}}-H\|\leq 4\|A\|\varepsilon_{l_{j'}}^{2}$ and $\|B_{1}\|_{\tilde{k}} \leq 2$ with $\deg B_{1}(\theta)=0$.

	 We conclude that there exist finitely many resonant steps in the reducibility procedure when  $\rho(\alpha, Ae^{f(\theta)})= \frac{\langle m,\alpha \rangle}{2}, m\in \mathbb{Z}^{d}\backslash\{0\}$.
	 Since $\deg B_{1}=0$, by (\ref{rho}), we have  $\rho(\alpha, H)=0$.
	 Combine that the cocycle $(\alpha,Ae^{f(\theta)})$ is not uniformly hyperbolic, we deduce that $H$ is parabolic. As $H\in {\rm SL}(2,\mathbb{R})$, we have $H=e^{h}$ with $h\in {\rm sl}(2,\mathbb{R})$ and $\det h=0$. Assume that $h=\begin{pmatrix}
	 h_{11}&h_{12}\\
	 h_{21}&-h_{11}
	 \end{pmatrix}$, then there exists $\phi\in \mathbb{T}^{1}$ such that $R_{-\phi}hR_{\phi}= \begin{pmatrix}
	 0&h_{21}-h_{12}\\
	 0&0
	 \end{pmatrix}$.
	 Let $B_{2}(\theta)=B_{1}(\theta) \circ R_{\phi}$ and $\zeta= h_{21}-h_{12}$,
	 we can see that the cocycle $(\alpha, Ae^{f(\theta)})$  is conjugated to $\tilde{H}=\begin{pmatrix} 1 & \zeta \\ 0& 1 \end{pmatrix}$ by $B(\theta)$, where $B(\theta) =  B_{l_{j'}}(\theta)\circ B_{2}(\theta)  \in C^{\tilde{k}}(2\mathbb{T}^{d},{\rm SL}(2,\mathbb{R}))$ with $\tilde{k}\leq[\frac{k}{400}]$.

	 Assuming that there are $s+1$ resonant steps with resonant sites:
	 \begin{equation*}
	 n_{l_{j_{0}}}^{*},\cdots, n_{l_{j_{s}}}^{*} \in \mathbb{Z}^{d}, \ \ \ 0<|n_{l_{j_{i}}}^{*}|\leq N_{l_{j_{i}}}=\frac{2|\ln \varepsilon_{l_{j_{i}}}|}{\frac{1}{l_{j_{i}}}-\frac{1}{l_{j_{i}+1}}}, \ \ i=0,1,\cdots, s,
	 \end{equation*}
	 then $m=n_{l_{j_{0}}}^{*}+\cdots +n_{l_{j_{s}}}^{*}$.	In  view of the inequality (\ref{bili}) and the fact that
	 \begin{equation*}
	 |n_{l_{j_{s}}}^{*}|-\sum_{i=0}^{s-1}|n_{l_{{j_{i}}}}^{*}| \leq |m|\leq |n_{l_{j_{s}}}^{*}| + \sum_{i=0}^{s-1}|n_{l_{{j_{i}}}}^{*}|,
	 \end{equation*}
	 we get $(1-2\varepsilon_{3}^{\frac{1}{25\tau }})|n_{l_{j_{s}}}^{*}|\leq |m|\leq (1+2\varepsilon_{3}^{\frac{1}{25\tau }})|n_{l_{j_{s}}}^{*}|$.

	 Let us focus on cocycle $(\alpha, A_{l_{j_{s}}}e^{\tilde{f}_{l_{j_{s}}}(\theta)})$, which means it is obtained by the last resonant step.  In view of (\ref{ao5}),
	 \begin{equation*}
	 A_{l_{j_{s}}} = M^{-1}\exp 2\pi \begin{pmatrix} it_{l_{j_{s}}} & \nu_{l_{j_{s}}} \\\overline{\nu_{l_{j_{s}}}} & - it_{l_{j_{s}}}\end{pmatrix}M :=e^{A''_{l_{j_{s}}}}
	 \end{equation*}
	 with
	 \begin{equation}
	 |\nu_{l_{j_{s}}}|\leq 4\|A\|\gamma^{-1}|n^{*}_{l_{j_{s}}}|^{\tau}\varepsilon_{l_{j_{s}}} e^{-2\pi \frac{1}{l_{j_{s}}} |n^{*}_{l_{j_{s}}}|}. \label{ess}
	 \end{equation}
	 In the following, we will estimate the constant matrix $H$.
	 Rewrite $H$ as $H= M^{-1}\exp \begin{pmatrix} i\beta_{11} & \beta_{12} \\ \bar{\beta}_{12} & -i\beta_{11} \end{pmatrix}M$, where $\beta_{11}\in \mathbb{R}$, $\beta_{12}\in \mathbb{C}$.
	 Since $A''_{l_{j_{s}}} = 2\pi M^{-1}\begin{pmatrix}
	 it_{l_{j_{s}}} & \nu_{l_{j_{s}}} \\\overline{\nu_{l_{j_{s}}}} & - it_{l_{j_{s}}}
	 \end{pmatrix}M$,
	 by  (\ref{ess}) and Lemma \ref{sim}, it follows that
	 \begin{equation}
	 \begin{split}
	 |\beta_{12}|
	 &=|(M(h-A''_{l_{j_{s}}})M^{-1})_{12} + (MA''_{l_{j_{s}}}M^{-1})_{12}|\\
	 &\leq 4\|H-A_{l_{j_{s}}}\| + 2\pi |\nu_{l_{j_{s}}}|\\
	 &\leq 16\|A\|\varepsilon_{l_{j_{s}}}^{2} +  2\pi 4\|A\|\gamma^{-1}|n^{*}_{l_{j_{s}}}|^{\tau} \varepsilon_{l_{j_{s}}} e^{-2\pi \frac{1}{l_{j_{s}}}|n_{l_{j_{s}}}^{*}|} \\
	 &\leq 16\pi\|A\|\gamma^{-1}\times \frac{c}{(2\|A\|)^{D}}  |n^{*}_{l_{j_{s}}}|^{\tau} \left( \frac{1}{l_{j_{s}}}\right) ^{\frac{k}{4}}.
	 \end{split}\label{beta1}
	 \end{equation}
	 The second step uses that fact that $\|X-Y\|\leq 2\|e^{X}-e^{Y}\|$ if $X,Y\in {\rm sl}(2,\mathbb{R})$ and $\|X\|$ and $\|Y\|$ are small enough.
	 By the definition $N_{l_{j}} = \frac{2|\ln \varepsilon_{l_{j}}|}{\frac{1}{l_{j}}-\frac{1}{l_{j+1}}}$ and  $0< |n_{l_{j}}^{*}| \leq N_{l_{j}}$, we deduce that
	 \begin{equation}
	 |n_{l_{j_{s}}}^{*}| \leq \frac{2|\ln \varepsilon_{l_{j_{s}}}|}{\frac{1}{2}\frac{1}{l_{j_{s}}}}
	 =4 l_{j_{s}} \ln \frac{1}{\varepsilon_{l_{j_{s}}}}
	 =4l_{j_{s}} (c^{*}+ \frac{k}{4} \ln l_{j_{s}}),\label{n1}
	 \end{equation}
	 where $c^{*} = D\ln 2\|A\| +\ln \frac{1}{c}$ is a constant. To compare the relation between $|\beta_{12}|$ and $|n_{l_{j_{s}}}^{*}|$, we need the following essential observation. For fixd $k$, let $\xi=10^{-5}$, one can always choose the constant $l_{1}=M$ sufficiently large (and then $l_{j_{s}}$ also sufficiently large by $l_{j_{s}} =  M ^{2^{j_{s}-1}} $) such that
	 \begin{equation} \label{g}
	 (l_{j_{s}})^{1+\xi} \geq 4l_{j_{s}}(c^{*} + \frac{k}{2} \ln l_{j_{s}}),
	 \end{equation}
	 and
	 \begin{equation}
	 \frac{16\pi \|A\|}{\gamma} \times \left(\frac{1}{l_{j_{s}}}\right) ^{\frac{k}{8}+\xi} \leq \frac{1}{4} \left(\frac{1}{l_{1}}\right)^{\frac{k}{8}}.  \label{e1}
	 \end{equation}
	 Then by (\ref{n1}) and (\ref{g}), we have
	 \begin{equation}
	 \begin{split}
	 (l_{j_{s}})^{\frac{k}{8}-\xi} = \left((l_{j_{s}})^{1+\xi}\right) ^{\frac{\frac{k}{8}-\xi}{1+\xi}}
	 \geq |n_{l_{j_{s}}}^{*}|^{\frac{k-8\xi}{8(1+\xi)}}.
	 \end{split}\label{l1}
	 \end{equation}
	 According to (\ref{beta1}), (\ref{e1}) and (\ref{l1}), it follows that
	 \begin{equation*}
	 |\beta_{12}| \leq \frac{c}{(2\|A\|)^{D}}\times \frac{1}{4} \left(\frac{1}{l_{1}} \right)^{\frac{k}{8}} \times |n^{*}_{l_{j_{s}}}|^{\tau} \left( \frac{1}{l_{j_{s}}} \right)^{\frac{k}{8}-\xi} \leq \frac{1}{4}\varepsilon_{l_{1}}^{\frac{1}{2}} |n^{*}_{l_{j_{s}}}|^{-(\frac{k-8\xi}{8(1+\xi)}-\tau)}.
	 \end{equation*}
	 Since $\det h=0$, we have $|\beta_{11}|\leq \frac{1}{4} \varepsilon_{l_{1}}^{\frac{1}{2}} |n^{*}_{l_{j_{s}}}|^{-(\frac{k-8\xi}{8(1+\xi)}-\tau)}$, then
	 \begin{equation*}
	 |h_{12}|, |h_{21}|\leq \frac{1}{2} \varepsilon_{l_{1}}^{\frac{1}{2}} |n^{*}_{l_{j_{s}}}|^{-(\frac{k-8\xi}{8(1+\xi)}-\tau)}.
	 \end{equation*}
	 By the relation $|m|\leq (1+2\varepsilon_{4}^{\frac{1}{25\tau}})|n_{l_{j_{s}}}^{*}|$, we have
	 \begin{equation*}
	 \begin{split}
	 |\zeta| = |h_{12}-h_{21}|\leq \varepsilon_{l_{1}}^{\frac{1}{2}} |n^{*}_{l_{j_{s}}}|^{-(\frac{k-8\xi}{8(1+\xi)}-\tau)}
	 \leq \varepsilon_{3}^{\frac{1}{3}}|m|^{-\frac{k}{8.5}}.
	 \end{split}
	 \end{equation*}
	 By (\ref{com}) in Proposition \ref{pr1} and {\bf (A)} of Theorem \ref{ar} with  the fact $\sum_{i=0}^{s}|n_{l_{j_{i}}}^{*}| \leq \frac{3}{2} |m|$ and $\prod_{i=0}^{s}|n_{l_{j_{i}}}^{*}|\leq |n_{l_{j_{s}}}^{*}|^{2}$, we have
	 \begin{equation*}
	 \|B(\theta)\|_{\tilde{k}}\leq 2\prod_{i=0}^{s} \|P_{l_{j_{i}}}\|  \|e^{Y_{l_{j_{i}}}}\|_{\tilde{k}}  \|Q_{l_{j_{i}}}(\theta)\|_{\tilde{k}} \leq \frac{4|n_{l_{j_{s}}}^{*}|^{2\tilde{k}+\tau}}{\sqrt{\gamma^{s+1}}}  \prod_{i=0}^{s}  \|A_{l_{j_{i}}}\|^{\frac{1}{2}}\|e^{Y_{l_{j_{i}}}}\|_{\tilde{k}}  .
	 \end{equation*}
	 By Cauchy estimate and (\ref{normgre}), one can obtain that
	 \begin{equation*}
	 \begin{split}
	 \prod_{i=0}^{s}\|e^{Y_{l_{j_{i}}}}\|_{\tilde{k}} &\leq \exp \sum_{i=0}^{s} \|Y_{l_{j_{i}}}\|_{\tilde{k}} \leq \exp \sum_{i=0}^{s} \tilde{k}! (l_{j_{i}})^{\tilde{k}} |Y_{l_{j_{i}}}|_{\frac{1}{l_{j_{i}}}}\\
	 & \leq \exp\sum_{i=0}^{s} \left( \tilde{k}!  (l_{j_{i}})^{\tilde{k}}2\sqrt{2} \|A_{l_{j_{i}-1}}\|^{\frac{1}{2}} \gamma^{-\frac{1}{2}} |n^{*}_{l_{j_{i}}}|^{\frac{\tau}{2}} \varepsilon_{l_{j_{i}}}^{\frac{1}{2}}\right)\\
	 &\leq  \exp\sum_{i=0}^{s}  (l_{j_{i}})^{-\frac{k}{20}}<2,
	 \end{split}
	 \end{equation*}
hence we conclude the estimate of $\|B(\theta)\|_{\tilde{k}}$ as	
	 \begin{equation*}
	  \|B(\theta)\|_{\tilde{k}}\leq D_{1} |m|^{(2\tilde{k}+\tau)}
	 \end{equation*}
for some $D_{1} = D_{1}(\gamma, \tau, d,k, \|A\|)$.
\end{proof}

\section{Gap estimates via Moser-P\"{o}schel argument}
For the sake of intriguing application, we specialize in one typical example of quasi-periodic ${\rm SL}(2,\mathbb{R})$ cocycles.
Let us consider the discrete quasi-periodic Schr\"{o}dinger operator on $\ell^{2}(\mathbb{Z})$:
\begin{equation*}
(H_{V,\alpha,\theta}u)_{n}= u_{n+1} + u_{n-1} + V(\theta+n\alpha)u_{n}, \ \ \forall \ n\in \mathbb{Z},
\end{equation*}
where $\alpha\in {\rm DC}_{d}(\gamma,\tau)$, $\theta\in \mathbb{T}^{d}$, and $ V\in C^{k}(\mathbb{T}^{d},\mathbb{R})$ is small. Recall the Gap-Labelling Theorem, each spectral gap has a unique $m\in\mathbb{Z}^{d}$ satisfying $2\rho(\alpha,S_{E}^{V})=\langle m,\alpha \rangle$ and we denote by $G_{m}(V)=(E_{m}^{-},E_{m}^{+})$.
Since $E^{+}_{m}\in {\Sigma_{V,\alpha}}$ is the right edge point of gap $G_{m}(V)$, from Theorem \ref{r} the Schr\"{o}dinger cocycle $(\alpha, S^{V}_{E_{m}^{+}})$ can be $C^{k,\tilde{k}}$  conjugated to a constant parabolic cocycle $(\alpha, B)$ if $\|V\|_{k}$ is sufficient small, i.e., there exist $X\in C^{\tilde{k}}(2\mathbb{T}^{d},{\rm SL}(2,\mathbb{R}))$ with $\tilde{k}\leq [\frac{k}{400}]$, $B\in {\rm SL}(2,\mathbb{R})$ and $\zeta>0$ such that
\begin{equation*}
X(\theta+\alpha)^{-1}S^{V}_{E_{m}^{+}}X(\theta) = B :=\begin{pmatrix}
1&\zeta\\
0&1
\end{pmatrix}.
\end{equation*}
\begin{remark}
When $\zeta<0$, the energy of  Schr\"{o}dinger cocycle lies at left edge point of a gap.	However  $\zeta=0$ if and only if the corresponding energy is in a collapsed spectral gap.
\end{remark}

In this section, we will show that $|G_{m}(V)|$ is determined by $\|X\|_{\tilde{k}}$ and $\zeta$. To achieve this, we first establish our $C^k$ version of Moser-P\"{o}schel argument. We denote $[\cdot]$ the average of a quasi-periodic function.

\subsection{Moser-P\"{o}schel argument}

Assume that $\zeta\in(0,\frac{1}{2})$.
For any $\delta\in (0,1)$, by direct calculation we have
\begin{equation*}
X(\theta+\alpha)^{-1}S^{V}_{E_{m}^{+}-\delta}X(\theta) = B-\delta P(\theta),
\end{equation*}
where
\begin{equation}
P(\theta) = \begin{pmatrix}
X_{11}(\theta)X_{12}(\theta)-\zeta X_{11}^{2}(\theta)&-\zeta X_{11}(\theta)X_{12}(\theta) +X_{12}^{2}(\theta)\\
-X_{11}^{2}(\theta)&-X_{11}(\theta)X_{12}(\theta)
\end{pmatrix}, \label{P}
\end{equation}
with estimate
\begin{equation}
\|P(\theta)\|_{\tilde{k}} \leq (1+\zeta)\|X\|^{2}_{\tilde{k}},  \ \tilde{k}\leq [\frac{k}{400}].
\end{equation}

\begin{lemma}\label{kam}
	Suppose that $\alpha\in {\rm DC}_{d}(\gamma,\tau)$ and $P(\theta)\in C^{\tilde{k}}(\mathbb{T}^{d},{\rm SL}(2,\mathbb{R}))$ of form {\rm (\ref{P})}.
	Let $D_{\tau} = 8\sum_{m=1}^{\infty} (2\pi m)^{-(\tilde{k}-\hat{k}-3\tau-d+1)}$ with $\hat{k}\in
	\mathbb{Z}$ and $\hat{k}<\tilde{k}-3\tau-d$.
	If $0<\delta <  D_{\tau}^{-1}\gamma^{3}\|X(\theta)\|_{\tilde{k}}^{-2}$, then there exist $\widetilde{X}(\theta) \in C^{\hat{k}}(2\mathbb{T}^{d},{\rm SL}(2,\mathbb{R}))$  and $P_{1}(\theta)\in C^{\hat{k}}(\mathbb{T}^{d},{\rm gl}(2,\mathbb{R}))$ such that
	\begin{equation*}
	\widetilde{X}(\theta+\alpha)^{-1}(B-\delta P(\theta))\widetilde{X}(\theta) = \exp(b_{0}-\delta b_{1}) +\delta^{2}P_{1}(\theta),
	\end{equation*}
	where $b_{0}=\begin{pmatrix}
	0&\zeta\\
	0&0
	\end{pmatrix}$ and
	\begin{equation} \label{b}
	b_{1}=\begin{pmatrix}
	[X_{11}X_{12}]-\frac{\zeta}{2}[X_{11}^{2}] &-\zeta[X_{11}X_{12}]+[X_{12}^{2}]\\
	-[X_{11}^{2}]&-[X_{11}X_{12}]+\frac{\zeta}{2}[X_{11}^{2}]
	\end{pmatrix}
	\end{equation}
	with estimates
	\begin{align*}
	&\|\widetilde{X}(\theta)-{\rm Id}\|_{\hat{k}} \leq D_{\tau} \gamma^{-3}\delta \|X\|_{\tilde{k}}^{2},\\
	&\|P_{1}(\theta)\|_{\hat{k}} \leq 53D_{\tau}^{2}\gamma^{-6}\|X\|_{\tilde{k}}^{4}  +\delta^{-1}\zeta^{2}\|X\|_{\tilde{k}}^{2}.
	\end{align*}
\end{lemma}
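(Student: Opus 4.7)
The plan is to build $\widetilde{X}$ as a near-identity conjugacy $\widetilde{X}(\theta)={\rm Id}+Y(\theta)$ (equivalently $\exp(Y(\theta))$ with $Y\in{\rm sl}(2,\mathbb{R})$ to preserve $\det=1$) designed so that the order-$\delta$ part of the conjugation collapses to a constant matching $\exp(b_{0}-\delta b_{1})$ up to a controlled $\mathcal{O}(\delta^{2})$ tail. Expanding to first order,
\begin{equation*}
\widetilde{X}(\theta+\alpha)^{-1}(B-\delta P)\widetilde{X}(\theta)= B + \bigl(BY(\theta)-Y(\theta+\alpha)B\bigr) - \delta P(\theta) + \mathcal{O}_{2},
\end{equation*}
where $\mathcal{O}_{2}$ collects the quadratic cross terms $\delta Y P$, $Y^{2}B$, and so on. One is thus led to solve the twisted cohomological equation $BY(\theta)-Y(\theta+\alpha)B=\delta\bigl(P(\theta)-B_{0}\bigr)$ with a constant obstruction $B_{0}$ chosen from the range condition.

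Passing to Fourier series $Y(\theta)=\sum_{n}Y_{n}e^{2\pi i\langle n,\theta\rangle}$, the equation decouples into the Sylvester-type systems $BY_{n}-\omega_{n}Y_{n}B=\delta P_{n}$ with $\omega_{n}=e^{2\pi i\langle n,\alpha\rangle}$, which I would solve explicitly by Gaussian elimination for $n\neq 0$. Because $B={\rm Id}+b_{0}$ is parabolic with $b_{0}^{2}=0$, the $(21)$-entry of $Y_{n}$ acquires only a single factor $(1-\omega_{n})^{-1}$, the diagonal entries two (through the $\zeta\cdot(Y_{n})_{21}$ coupling), while $(Y_{n})_{12}$ acquires three (via its coupling with both diagonal entries through $\zeta$). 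Combining with the Diophantine bound $|1-\omega_{n}|\gtrsim\gamma|n|^{-\tau}$ and the polynomial Fourier decay $\|P_{n}\|\lesssim\|P\|_{\tilde k}|n|^{-\tilde k}$, summation after losing $3\tau+d$ derivatives produces exactly $\|\widetilde{X}-{\rm Id}\|_{\hat k}\le D_{\tau}\gamma^{-3}\delta\|X\|_{\tilde k}^{2}$. The smallness hypothesis $\delta\le D_{\tau}^{-1}\gamma^{3}\|X\|_{\tilde k}^{-2}$ then keeps $\widetilde X$ invertible with comparable bounds.

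The zero-mode equation $[b_{0},Y_{0}]=\delta([P]-B_{0})$ imposes an obstruction: since the range of $[b_{0},\cdot]$ consists of matrices with vanishing $(21)$-entry and vanishing trace, we must have $(B_{0})_{21}=[P]_{21}=-[X_{11}^{2}]$ and ${\rm tr}\,B_{0}={\rm tr}\,[P]=-\zeta[X_{11}^{2}]$, while the other entries of $B_{0}$ are free. I would fix them so that $B_{0}=b_{1}+\tfrac12(b_{0}b_{1}+b_{1}b_{0})$ for some $b_{1}$; a short Baker--Campbell--Hausdorff expansion (using $b_{0}^{2}=0$) gives
\begin{equation*}
\exp(b_{0}-\delta b_{1})=B-\delta\bigl(b_{1}+\tfrac12(b_{0}b_{1}+b_{1}b_{0})\bigr)-\tfrac{\delta}{6}b_{0}b_{1}b_{0}+\tfrac{\delta^{2}}{2}b_{1}^{2}+\mathcal{O}(\delta^{3}).
\end{equation*}
Using the elementary identity $b_{0}b_{1}+b_{1}b_{0}=\zeta(b_{1})_{21}\,{\rm Id}$ for any traceless $b_{1}$, matching $B_{0}$ entry by entry with $[P]$ recovers exactly the formula (\ref{b}) for $b_{1}$ (automatically traceless, the trace condition above being consistent).

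Finally, $\delta^{2}P_{1}(\theta)$ collects the residual: the quadratic-in-$Y$ cross terms from $\widetilde{X}^{-1}(B-\delta P)\widetilde{X}$, bounded by $\|Y\|_{\hat k}^{2}+\delta\|Y\|_{\hat k}\|P\|_{\tilde k}\lesssim D_{\tau}^{2}\gamma^{-6}\delta^{2}\|X\|_{\tilde k}^{4}$, give the first piece of the bound on $\|P_{1}\|_{\hat k}$; and the BCH tails $-\tfrac{\delta}{6}b_{0}b_{1}b_{0}+\tfrac{\delta^{2}}{2}b_{1}^{2}+\cdots$, whose dominant term $b_{0}b_{1}b_{0}$ has size $\zeta^{2}\|X\|_{\tilde k}^{2}$ and hence contributes $\delta\cdot\zeta^{2}\|X\|_{\tilde k}^{2}$ to the error, i.e.\ $\delta^{-1}\zeta^{2}\|X\|_{\tilde k}^{2}$ to $P_{1}$. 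The main obstacle is the Sylvester inversion itself: the parabolic (non-semisimple) structure of $B$ forces the $(12)$-entry of $Y_{n}$ to inherit a \emph{cubic} small divisor, and this is precisely what dictates both the $\gamma^{-3}$ factor and the $3\tau+d$ derivative loss that propagate through the entire estimate.
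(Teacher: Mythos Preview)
Your proposal is correct and follows essentially the same route as the paper: solve the linearized cohomological equation $BY(\theta)-Y(\theta+\alpha)B=\delta(P(\theta)-[P])$ (the paper phrases this via $G=-\delta B^{-1}P\in{\rm sl}(2,\mathbb{R})$ and $\widetilde X=e^{Y}$, but it is the same equation), then identify $B-\delta[P]$ with $\exp(b_{0}-\delta b_{1})$ up to $\mathcal{O}(\delta^{2})$ using $b_{0}^{2}=0$, the $\delta^{-1}\zeta^{2}\|X\|_{\tilde k}^{2}$ term in $\|P_{1}\|_{\hat k}$ arising precisely from the cubic tail $(b_{0}-\delta b_{1})^{3}\ni -\delta\, b_{0}b_{1}b_{0}$. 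Your entry-by-entry explanation of why the $(12)$-component of $Y_{n}$ inherits a cubic small divisor (hence the factor $\gamma^{-3}$ and the loss of $3\tau+d$ derivatives) is in fact more explicit than the paper, which records the $|e^{2\pi i\langle n,\alpha\rangle}-1|^{-3}$ in the estimate without comment.
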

\begin{proof}
	Let $G:=-\delta B^{-1}P$, one can see that $G\in C^{\tilde{k}}(\mathbb{T}^{d},{\rm sl}(2,\mathbb{R}))$.
    We first solve the linearized cohomological equation
    \begin{equation*}
    -Y(\theta+\alpha)B + BY(\theta) = B(G(\theta)-[G]).
    \end{equation*}
    Compare the Fourier coefficients of two sides,
    and by the polynomial decay of Fourier coefficients $\widehat{G}(n)$, we have
    \begin{equation*}
    \begin{split}
    \|Y(\theta)\|_{\hat{k}}
    &\leq \sum_{n\in \mathbb{Z}^{d}} \frac{\|G\|_{\tilde{k}} |2\pi n|^{-\tilde{k}}}{|e^{2\pi i\langle n,\alpha \rangle}-1|^{3}} |2\pi n|^{\hat{k}}\\
    &\leq \gamma^{-3}\|G\|_{\tilde{k}} \sum_{n\in \mathbb{Z}^{d}\backslash\{0\}} |2\pi n|^{-(\tilde{k}-\hat{k}-3\tau)} \\
    &\leq \frac{1}{4}D_{\tau} \gamma^{-3}\delta \|P\|_{\tilde{k}},
    \end{split}
    \end{equation*}
    where $D_{\tau} = 8\sum_{m=1}^{\infty} (2\pi m)^{-(\tilde{k}-\hat{k}-3\tau-d+1)} <\infty$ if $\hat{k}<\tilde{k}-3\tau-d$. Let $\widetilde{X} =e^{Y}$, we have
    \begin{equation*}
    \widetilde{X}(\theta+\alpha)^{-1}(B-\delta P(\theta)) \widetilde{X}(\theta) = Be^{[G]}+\widetilde{P}(\theta),
    \end{equation*}
    with estimate
    \begin{equation*}
    \|\widetilde{X}-{\rm Id}\|_{\hat{k}} \leq 2\|Y(\theta)\|_{\hat{k}} \leq \frac{1}{2}D_{\tau} \gamma^{-3}\delta \|P\|_{\tilde{k}} \leq D_{\tau} \gamma^{-3}\delta \|X\|_{\tilde{k}}^{2},
    \end{equation*}
    where
    \begin{equation*}
    \begin{split}
    \widetilde{P}(\theta) &= \sum_{m+n\geq 2}\frac{1}{m!}(-Y(\theta+\alpha))^{m}B\frac{1}{n!}Y(\theta)^{n}    -B\sum_{n\geq 2} \frac{1}{n!}[G]^{n}\\
    & \ \ \ \ - \delta\sum_{m+n\geq 1}\frac{1}{m!}(-Y(\theta+\alpha))^{m}P(\theta)\frac{1}{n!}Y(\theta)^{n}.
    \end{split}
    \end{equation*}
    Note that $\sum_{m+n=k}\frac{k!}{m!n!}  =2^{k}$ and $\|G\|_{\hat{k}} \leq 2\delta \|P\|_{\hat{k}}$, we have
    \begin{equation*}
    \begin{split}
    \left\|\sum_{m+n\geq 2}\frac{1}{m!}(-Y(\theta+\alpha))^{m}B\frac{1}{n!}Y(\theta)^{n} \right\|_{\hat{k}} &\leq 2\times 4 \|Y\|_{\hat{k}}^{2}\times\sum_{m+n=2}\frac{\|B\|}{m!n!}\\
    &\leq 2 D_{\tau}^{2}\gamma^{-6}\delta^{2}\|P\|^{2}_{\tilde{k}},
    \end{split}
    \end{equation*}
    \begin{equation*}
    \begin{split}
    \left\|\delta\sum_{m+n\geq 1}\frac{1}{m!}(-Y(\theta+\alpha))^{m}P(\theta)\frac{1}{n!}Y(\theta)^{n}\right\|_{\hat{k}}&\leq 2\delta\times 2\|Y\|_{\hat{k}} \times \sum_{m+n=1}\frac{\|P\|_{\hat{k}}}{m!n!}\\
    &\leq 2D_{\tau}\gamma^{-3}\delta^{2}\|P\|_{\tilde{k}}^{2},
    \end{split}
    \end{equation*}
    \begin{equation*}
    \begin{split}
    \left\|B\sum_{n\geq 2} \frac{1}{n!}[G]^{n}\right\|_{\hat{k}} &\leq 4\delta^{2}\|P\|_{\tilde{k}}^{2}.
    \end{split}
    \end{equation*}
 Hence, it follows that
    \begin{equation*}
    \|\widetilde{P}(\theta)\|_{\hat{k}}\leq 4 D_{\tau}^{2}\gamma^{-6}\delta^{2}\|P\|^{2}_{\tilde{k}}.
    \end{equation*}

    One can define $\widetilde{P}_{1}:=\delta^{-2} \widetilde{P}+\sum_{j\geq2}(j!)^{-1}(-\delta)^{j-2}B[B^{-1}P]^{j}$ such that
    \begin{equation*}
    Be^{[G]}+\widetilde{P}(\theta) = B-\delta[P] +\delta^{2}\widetilde{P}_{1}(\theta).
    \end{equation*}
    By direct calculation, we have
    \begin{equation*}
    B-\delta[P] = {\rm Id}+(b_{0}-\delta b_{1}) -\frac{\delta}{2} (b_{0}b_{1}+b_{1}b_{0}).
    \end{equation*}
    Since $b_{0}$ is nilpotent, one can check that
    \begin{equation*}
    \widetilde{X}(\theta+\alpha)^{-1}(B-\delta P(\theta))\widetilde{X}(\theta) = \exp(b_{0}-\delta b_{1}) +\delta^{2}P_{1}(\theta),
    \end{equation*}
    where $P_{1}(\theta) = \widetilde{P}_{1}-\frac{1}{2}b_{1}^{2}-\delta^{-2}\sum_{j\geq3} (j!)^{-1}(b_{0}-\delta b_{1})^{j}$ with estimate
    \begin{equation*}
    \begin{split}
    \left\|P_{1}(\theta)\right\|_{\hat{k}} &\leq \|\widetilde{P}_{1}(\theta)\|_{\hat{k}}+\frac{1}{2}\|b_{1}\|^{2} + 2\delta^{-2}\times\frac{1}{3!}\|(b_{0}-\delta b_{1})^{3}\|\\
    &\leq 4D_{\tau}^{2}\gamma^{-6}\|P(\theta)\|_{\tilde{k}}^{2} + 2\times\frac{1}{2!} \|B\|^{3} \|P(\theta)\|_{\tilde{k}}^{2} + \frac{1}{2} \|P(\theta)\|_{\tilde{k}}^{2} \\
    &\ \ \ + \frac{2}{3!}\delta^{-2}\left(\delta^{3}\|P(\theta)\|_{\tilde{k}}^{3}+3\zeta\delta^{2}\|P(\theta)\|_{\tilde{k}}^{2}+\delta\zeta^{2}\|P(\theta)\|_{\tilde{k}}\right) \\
    &\leq 8D_{\tau}^{2}\gamma^{-6}\|X(\theta)\|_{\tilde{k}}^{4} + 32\|X(\theta)\|_{\tilde{k}}^{4} +2\|X(\theta)\|_{\tilde{k}}^{4} \\
    & \ \ \ + \left(9\|X\|_{\tilde{k}}^{4}+2\|X\|_{\tilde{k}}^{4} + \delta^{-1}\zeta^{2}\|X\|_{\tilde{k}}^{2}\right)  \\
    &\leq 53D_{\tau}^{2}\gamma^{-6}\|X(\theta)\|_{\tilde{k}}^{4}  +\delta^{-1}\zeta^{2}\|X\|_{\tilde{k}}^{2},
    \end{split}
    \end{equation*}
    note that the third step use the condition $\delta <D_{\tau}^{-1}\gamma^{3}\|X(\theta)\|_{\tilde{k}}^{-2}$. Hence we finish the proof.
\end{proof}
\begin{remark}
	In the estimate of $\|P_1\|_{\hat{k}}$, we can not get rid of $\delta$ because of the non-commutative property of the matrix multiplication in general.
\end{remark}

\subsection{The upper bound of spectral gap}
With the help of Moser-P\"{o}schel argument and the reducibility of the Schr\"{o}dinger cocycle, we are able to prove the first main theorem.
\begin{proof}[Proof of Theorem \ref{main}]

Rewrite Schr\"{o}dinger cocycle $(\alpha, S^{V}_{E}) $ as  $(\alpha, A_{E}+F(\theta))$, where
\begin{equation*}
A_{E} = \begin{pmatrix}
E&-1\\
1&0
\end{pmatrix},\ \ \ F(\theta) = \begin{pmatrix}
-V(\theta+n\alpha)&0\\
0&0	
\end{pmatrix}.
\end{equation*}
By the assumption on $\|V\|_{k}$, we have $\|A_{E}\|\leq 3$ (so does $A_{E}^{-1}$) since  $E\leq 2+\sup_{\theta\in \mathbb{T}^{d}}\|V(\theta)\|_{\mathbb{T}^{d}}\leq 2+\varepsilon$. The norm of $A_{E}$ is bounded uniformly with respect to $E$. If we write
\begin{equation*}
A_{E} +F(\theta)= A_{E}e^{f(\theta)},
\end{equation*}
with $f(\theta)\in C^{k}(\mathbb{T}^{d},{\rm sl}(2,\mathbb{R}))$, then according to Theorem \ref{ar}, one can obtain $\varepsilon=\varepsilon(\gamma,\tau,k,d)$ which is independent of $E$, such that if $\|V\|_{k}\leq\varepsilon$ (the assumption of Theorem \ref{ar} are naturally fulfilled as $\|f\|_{k}\leq 2\|A_{E}^{-1}\|\|F\|_{k}$), then
the cocycle $(\alpha, S^{V}_{E}(\theta))$ is $C^{k,\tilde{k}}$ almost reducible. Moreover, if $\rho(\alpha, S^{V}_{E_{m}^{+}}) = \frac{\langle m,\alpha \rangle}{2}$ for $m\in \mathbb{Z}^{d}\backslash\{0\}$, by Theorem \ref{r} we have
\begin{equation}\label{conjugate}
X(\theta+\alpha)^{-1}S^{V}_{E_{m}^{+}}X(\theta) = \begin{pmatrix} 1&\zeta\\0&1
\end{pmatrix},
\end{equation}
with $\zeta \leq \varepsilon^{\frac{1}{3}}|m|^{-\frac{k}{8.5}}$ and $\|X(\theta)\|_{\tilde{k}}\leq D_{1} |m|^{(2\tilde{k}+\tau)}$, where $ \tilde{k}\leq[\frac{k}{400}]$ and $D_{1} = D_{1}(\gamma, \tau, d,k)$.
From now on, we fix $\tilde{k}=[\frac{k}{5000}]$, then for any $m\in \mathbb{Z}^{d}\backslash\{0\}$ we have
\begin{equation}\label{tiaojian}
\|X\|_{\tilde{k}}^{14}\zeta^{\frac{1}{18}}\leq D_{1}^{14} |m|^{14(2\tilde{k}+\tau)} \varepsilon^{\frac{1}{54}} |m|^{-\frac{17}{36}k}\leq 10^{-5}D_{\tau}^{-4}\gamma^{12}.
\end{equation}
The above inequality is possible since one can choose $\varepsilon$ sufficiently small and the smallness only depend on $\gamma,\tau, k,d$.

For any $\delta\in(0,1)$, we define a function $d(\delta):=\det(b_{0}-\delta b_{1}) + \frac{1}{4}\delta^{2}\zeta^{2}[X_{11}^{2}]^{2}$, where $b_{0}$ and $b_{1}$ are defined in (\ref{b}). By a direct calculation, one can get that
\begin{equation}
\begin{split}
d(\delta) & = -\delta [X_{11}^{2}]\zeta +\delta^{2}([X_{11}^{2}][X_{12}^{2}]-[X_{11}X_{12}]^{2})\\
& = \delta ([X_{11}^{2}][X_{12}^{2}]-[X_{11}X_{12}]^{2})\left(\delta - \frac{[X_{11}^{2}]\zeta }{[X_{11}^{2}][X_{12}^{2}]-[X_{11}X_{12}]^{2}} \right).
\end{split} \label{d}
\end{equation}
To further estimate $X$ we recall the following fundamental lemma which has been proved in $C^{\omega}$ case in \cite{Le}, however it holds for $C^{k}$ case just by replacing the analytic norm.
\begin{lemma}[\cite{Le}] \label{poly}
	Let $X\in C^{\tilde{k}}(2\mathbb{T}^{d},{\rm SL}(2,\mathbb{R}))$ satisfying (\ref{conjugate}), then for any $\kappa \in (0,\frac{1}{4})$,  if $\|X\|_{\tilde{k}}\zeta^{\frac{\kappa}{2}}\leq \frac{1}{4}$, the followings hold:
	\begin{align*}
	&0<\frac{[X_{11}^{2}]}{[X_{11}^{2}][X_{12}^{2}]-[X_{11}X_{12}]^{2}} \leq \frac{1}{2}\zeta^{-\kappa},\\
	&[X_{11}^{2}][X_{12}^{2}]-[X_{11}X_{12}]^{2} \geq 8\zeta^{2\kappa}.
	\end{align*}
\end{lemma}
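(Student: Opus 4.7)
The plan is to reduce both bounds to a single algebraic identity coming from the conjugacy equation (\ref{conjugate}) together with the constraint $\det X\equiv 1$, and then combine it with a Cauchy--Schwarz estimate and the smallness hypothesis $\|X\|_{\tilde k}\zeta^{\kappa/2}\le 1/4$.

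First I would read off the action of the Schr\"odinger cocycle on the two columns of $X$. The conjugacy $X(\theta+\alpha)^{-1}S^V_{E^+_m}(\theta)X(\theta)=B$ makes the first column $(X_{11},X_{21})^\top$ into an invariant section, so $X_{21}(\theta)=X_{11}(\theta-\alpha)$ and $X_{11}$ is a formal eigenfunction at $E^+_m$. The second column picks up a $\zeta$-shift, $S^V_{E^+_m}(\theta)(X_{12},X_{22})^\top(\theta)=(X_{12},X_{22})^\top(\theta+\alpha)+\zeta(X_{11},X_{21})^\top(\theta+\alpha)$, giving $X_{22}(\theta)=X_{12}(\theta-\alpha)-\zeta X_{11}(\theta-\alpha)$. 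Substituting both expressions into $\det X(\theta)=1$ yields the key identity
\[
X_{11}(\theta)X_{12}(\theta-\alpha)-X_{12}(\theta)X_{11}(\theta-\alpha)=1+\zeta X_{11}(\theta)X_{11}(\theta-\alpha).
\]

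Next I would average in $\theta$ and perform an $L^2(\mathbb{T}^d)$-orthogonal decomposition. A change of variable turns the left-hand side into $[X_{11}(\theta)(X_{12}(\theta-\alpha)-X_{12}(\theta+\alpha))]$. Writing $X_{12}=\lambda X_{11}+c$ with $\lambda=[X_{11}X_{12}]/[X_{11}^2]$ and $[X_{11}c]=0$, the contribution of the $\lambda X_{11}$ piece vanishes, since $[X_{11}(\theta)(X_{11}(\theta-\alpha)-X_{11}(\theta+\alpha))]=0$ by shift-invariance of the average. Cauchy--Schwarz applied to the $c$-piece, together with the elementary bound $[(c(\theta-\alpha)-c(\theta+\alpha))^2]\le 4[c^2]$, then produces the essential inequality
\[
\bigl(1+\zeta[X_{11}(\theta)X_{11}(\theta-\alpha)]\bigr)^2\le 4[X_{11}^2][c^2]=4\bigl([X_{11}^2][X_{12}^2]-[X_{11}X_{12}]^2\bigr).
\]

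Finally I would close the argument using the smallness hypothesis. Since any $X\in\mathrm{SL}(2,\mathbb{R})$ has $\|X\|_0\ge 1$, the bound $\|X\|_{\tilde k}\zeta^{\kappa/2}\le 1/4$ forces $\zeta$ to be extremely small (at most $(1/4)^{2/\kappa}$), and $|\zeta[X_{11}(\theta)X_{11}(\theta-\alpha)]|\le\zeta\|X\|_0^2\le\zeta^{1-\kappa}/16$ is much less than $1$. The left-hand side above is therefore bounded below by a constant close to $1$, immediately giving $[X_{11}^2][X_{12}^2]-[X_{11}X_{12}]^2\gtrsim 1$, which in particular majorizes $8\zeta^{2\kappa}$ since $\zeta^{2\kappa}\le(1/4)^4$ under the hypothesis. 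The first inequality then falls out from the trivial estimate $[X_{11}^2]\le\|X\|_0^2\le\zeta^{-\kappa}/16$ combined with the constant lower bound on the Gram determinant. The main subtlety in the whole scheme is the cancellation of the $\lambda$-contribution in Step 2; it is precisely this cancellation that causes the Gram determinant itself, rather than only the weaker product $[X_{11}^2][X_{12}^2]$, to appear on the right of the Cauchy--Schwarz inequality.
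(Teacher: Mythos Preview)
The paper does not give its own proof of this lemma: it is quoted from \cite{Le} with only the remark that the analytic norm can be replaced by the $C^{\tilde k}$ norm. Your argument is correct and is essentially the proof one finds in \cite{Le}: the identity $X_{11}(\theta)X_{12}(\theta-\alpha)-X_{12}(\theta)X_{11}(\theta-\alpha)=1+\zeta X_{11}(\theta)X_{11}(\theta-\alpha)$ obtained from $\det X\equiv 1$ and the column relations, followed by the orthogonal decomposition $X_{12}=\lambda X_{11}+c$ and Cauchy--Schwarz, which yields exactly the inequality $(1+\zeta[X_{11}(\cdot)X_{11}(\cdot-\alpha)])^2\le 4([X_{11}^2][X_{12}^2]-[X_{11}X_{12}]^2)$; the smallness hypothesis $\|X\|_{\tilde k}\zeta^{\kappa/2}\le 1/4$ then closes both estimates exactly as you indicate. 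The one point worth making explicit, since it is used tacitly, is that $[X_{11}^2]>0$: if $X_{11}\equiv 0$ the key identity reads $0=1$, so $X_{11}$ cannot vanish identically.
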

Let $\delta_{1} = \zeta^{\frac{17}{18}}$. By (\ref{tiaojian}) and  $\|X\|_{\tilde{k}}\geq 1$, we have
\begin{equation*}
\delta_{1}D_{\tau} \gamma^{-3} \|X(\theta)\|_{\tilde{k}}^{2}\leq \zeta^{\frac{1}{52}} D_{\tau} \gamma^{-3}  \|X(\theta)\|_{\tilde{k}}^{\frac{7}{2}} \leq 10^{-\frac{5}{4}}<1,
\end{equation*}
which deduces that $0<\delta_{1}<D_{\tau}^{-1}\gamma^{3}\|X(\theta)\|_{\tilde{k}}^{-2}$. Then by Lemma \ref{kam}, there exist $\widetilde{X}\in C^{\hat{k}}(2\mathbb{T}^{d},{\rm SL}(2,\mathbb{R}))$ and $P_{1}\in C^{\hat{k}}(\mathbb{T}^{d},{\rm gl}(2,\mathbb{R}))$ such that the cocycle $(\alpha, B-\delta_{1}P(\theta))$ is conjugated to $(\alpha, e^{b_{0}-\delta_{1}b_{1}}+\delta_{1}^{2}P_{1})$ by $\widetilde{X}$. Since $\widetilde{X}$ is homotopic to identity by construction, we have
\begin{equation*}
\rho(\alpha, B-\delta_{1}P(\theta)) =\rho(\alpha, e^{b_{0}-\delta_{1}b_{1}}+\delta_{1}^{2}P_{1}(\theta)).
\end{equation*}
To prove that $|G(V)|\leq \delta_{1}$, it is sufficient to show that $\rho(\alpha, e^{b_{0}-\delta_{1}b_{1}}+\delta_{1}^{2}P_{1}(\theta))>0$ by monotonicity of rotation number. According to (\ref{tiaojian}), one can check that $
\|X\|_{\tilde{k}}\zeta^{\frac{1}{36}} \leq 10^{-\frac{5}{2}}D_{\tau}^{-2} \|X\|^{-6}_{\tilde{k}} \gamma^{6} \leq \frac{1}{4}$.
Apply Lemma \ref{poly} to (\ref{d}), for $d(\delta_{1}) = \det(b_{0}-\delta_{1}b_{1})+\frac{1}{4}\delta_{1}^{2}\zeta^{2}[X_{11}^{2}]^{2}$, we have
\begin{equation*}
d(\delta_{1}) \geq \zeta^{\frac{17}{18}}\times 8\zeta^{\frac{1}{9}}\times \frac{1}{2}\zeta^{\frac{17}{18}} =4\zeta^{2}.
\end{equation*}
Moreover, by (\ref{tiaojian}) it is easy to see that
\begin{equation}
\begin{split}
\det (b_{0}-\delta_{1}b_{1})
\geq 4\zeta^{2}-\frac{1}{4}\delta_{1}^{2}\zeta^{2}[X_{11}^{2}]^{2}
\geq 4\zeta^{2}(1-\frac{1}{16}\zeta^{\frac{17}{9}}\|X\|_{\tilde{k}}^{4})\geq 3\zeta^{2}.
\end{split} \label{c1}
\end{equation}
Hence, there exists $\mathcal{P}\in {\rm SL}(2,\mathbb{R})$ such that
\begin{equation*}
\mathcal{P}^{-1}e^{b_{0}-\delta_{1}b_{1}}\mathcal{P} = \exp \begin{pmatrix}
0&\sqrt{\det (b_{0}-\delta_{1}b_{1})}\\
-\sqrt{\det (b_{0}-\delta_{1}b_{1})}&0
\end{pmatrix}:=\Delta
\end{equation*}
with $
\|\mathcal{P}\|\leq 2 \left(\frac{\|b_{0}-\delta_{1}b_{1}\|}{\sqrt{\det (b_{0}-\delta_{1}b_{1})}}\right)^{\frac{1}{2}}$. Since $\|b_{0}-\delta_{1}b_{1}\|\leq \zeta+\delta_{1}(1+\zeta)|X|_{\mathbb{T}^{d}}^{2}\leq \frac{3}{2}\zeta^{\frac{17}{18}} \|X\|^{2}_{\tilde{k}}$, we have
\begin{equation*}
\begin{split}
\frac{\|b_{0}-\delta_{1}b_{1}\|}{\sqrt{\det (b_{0}-\delta_{1}b_{1})}}
&\leq \frac{\frac{3}{2}\zeta^{\frac{17}{18}}\|X\|^{2}_{\tilde{k}}}{\sqrt{3}\zeta} \leq \|X\|_{\tilde{k}}^{2}\zeta^{-\frac{1}{18}}.
\end{split}
\end{equation*}
According to Lemma \ref{eig} and Lemma \ref{kam} with
$\mathcal{P}^{-1} (e^{b_{0}-\delta_{1}b_{1}}+\delta_{1}^{2}P_{1})\mathcal{P} = \Delta  +\mathcal{P}^{-1}\delta_{1}^{2}P_{1}(\theta)\mathcal{P}$, we have
\begin{equation}
\begin{split}
&\ \ \ \ |\rho(\alpha,e^{b_{0}-\delta_{1}b_{1}}+\delta_{1}^{2}P_{1})-\sqrt{\det(b_{0}-\delta_{1}b_{1})}| \\
&= |\rho(\alpha,\Delta  +\mathcal{P}^{-1}\delta_{1}^{2}P_{1}(\theta)\mathcal{P} )-\rho(\alpha, \Delta)|\\
&\leq \delta_{1}^{2}\|\mathcal{P}\|^{2} \|P_{1}\|_{\hat{k}}\\
&\leq \zeta^{\frac{17}{9}}\times 4 \|X\|_{\tilde{k}}^{2}\zeta^{-\frac{1}{18}} \times ( 53D_{\tau}^{2}\gamma^{-6}\|X\|_{\tilde{k}}^{4} +\zeta^{-\frac{17}{18}}\zeta^{2}\|X\|_{\tilde{k}}^{2} )\\
&\leq 240 D_{\tau}^{2}\gamma^{-6}\|X\|_{\tilde{k}}^{6}\zeta^{\frac{11}{6}}.
\end{split} \label{c2}
\end{equation}
By the assumption (\ref{tiaojian}), it deduces that
\begin{equation*}
\begin{split}
160D_{\tau}^{2}\gamma^{-6}\|X\|_{\tilde{k}}^{6}\zeta^{\frac{5}{6}} &\leq 160D_{\tau}^{2}\gamma^{-6}\|X\|_{\tilde{k}}^{6}\zeta^{\frac{5}{6}} \times (10^{-5}D_{\tau}^{-4}\gamma^{12}\|X\|_{\tilde{k}}^{-14}\zeta^{-\frac{1}{18}})\\
&\leq 160\times 10^{-5}D_{\tau}^{-2}\gamma^{6}\|X\|_{\tilde{k}}^{-8} \zeta^{\frac{7}{9}}\\
&<1,
\end{split}
\end{equation*}
then combine with (\ref{c1}) and  (\ref{c2}), we have
\begin{equation*}
\begin{split}
\rho(\alpha, e^{b_{0}-\delta_{1}b_{1}}+\delta_{1}^{2}P_{1})&\geq |\rho(\alpha,\Delta)|- |\rho(\alpha,\Delta  +\mathcal{P}^{-1}\delta_{1}^{2}P_{1}(\theta)\mathcal{P} )-\rho(\alpha, \Delta)|\\
& \geq \sqrt{3}\zeta -160D_{\tau}^{2}\gamma^{-6}\|X\|_{\tilde{k}}^{6}\zeta^{\frac{11}{6}}\\
&\geq \sqrt{3}\zeta-\frac{3}{2}\zeta >0.
\end{split}
\end{equation*}
Hence, by $\zeta \leq \varepsilon^{\frac{1}{3}}|m|^{-\frac{k}{8.5}}$ we have
\begin{equation*}
|G_{m}(V)|\leq \delta_{1}=\zeta^{\frac{17}{18}} \leq\varepsilon^{\frac{1}{4}}|m|^{-\frac{k}{9}}, \ \ \ \forall \ m\in \mathbb{Z}^{d}\backslash\{0\}.
\end{equation*}
This finishes the proof of Theorem \ref{main}.
\end{proof}

\section{Homogeneous spectrum}

As stated, homogeneous spectrum follows by polynomial decay of gap length and H\"{o}lder continuity of IDS. In this final section, we prove our second main theorem.
\begin{proof}[Proof of Theorem 1.2]
	Consider two different gaps $G_{m}(V)$ and $G_{m'}(V)$, without loss of generality, we assume that $E_{m}^{+}\leq E_{m'}^{-}$.
	Hence, one can obtain that $ {\rm dist} ( G_{m}(V), G_{m'} (V) ) = E_{m'}^{-} - E_{m}^{+} $. Set $\underline{E}=\min \Sigma_{V,\alpha}$ $\overline{E}=\max \Sigma_{V,\alpha}$ and $G_{0}(V)=(-\infty,\underline{E})\cup (\overline{E},+\infty)$.
	We need the following lemma.
	\begin{lemma}[\cite{Cai2}]\label{holder}
		Let $\alpha \in {\rm DC}_{d}(\gamma,\tau)$, $V\in C^{k}(\mathbb{T}^{d},\mathbb{R})$ with $k\geq 5D\tau$, where $D$ is a numerical constant. Then there exists $\varepsilon_{5}=\varepsilon_{5}(\gamma,\tau,k,d)$ such that if $\|V\|_{k}\leq \varepsilon_{5}$, then $N=N_{V,\alpha}$ is $\frac{1}{2}$-H\"{o}lder continuous:
		\begin{equation*}
		N(E+\tilde{\epsilon})-N(E-\tilde{\epsilon}) \leq C_{0}\tilde{\epsilon}^{\frac{1}{2}}, \ \ \forall \ \tilde{\epsilon}>0, \ \forall \ E\in \mathbb{R},
		\end{equation*}
		where $C_{0}=C_{0}(\gamma,\tau,d)$.
	\end{lemma}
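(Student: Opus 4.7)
The plan is to reduce the $\tfrac12$-Hölder continuity of the IDS to that of the fibered rotation number via \eqref{ids}: since $N_{V,\alpha}(E) = 1 - 2\rho(\alpha, S^V_E) \pmod{\mathbb{Z}}$ and $N_{V,\alpha}$ is monotone, it suffices to prove
\[
|\rho(\alpha, S^V_{E+\tilde{\epsilon}}) - \rho(\alpha, S^V_{E-\tilde{\epsilon}})| \leq C_0 \tilde{\epsilon}^{1/2}
\]
for all $E \in \mathbb{R}$ and $\tilde{\epsilon} > 0$. Since $\rho$ is locally constant outside $\Sigma_{V,\alpha}$, we may assume $E$ lies near the spectrum, and (by rescaling the constant) we may assume $\tilde{\epsilon}$ is small.

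Fix $E$ and apply the $C^{k}$ almost reducibility Theorem \ref{ar} to $(\alpha, S^V_E)$. For each scale $l_j$ we obtain $B_{l_j}(\theta), A_{l_j}, \tilde{f}_{l_j}(\theta)$ with
\[
B_{l_j}(\theta+\alpha)^{-1} S^V_E(\theta) B_{l_j}(\theta) = A_{l_j} e^{\tilde{f}_{l_j}(\theta)},
\]
where $\|B_{l_j}\|_{\tilde{k}} \leq \varepsilon_{l_j}^{-2\sigma/5}$ and $\|\tilde{f}_{l_j}\|_{k_0} \leq \varepsilon_{l_j}^2$. A direct expansion at the perturbed energy $E' = E + s$, $|s| \leq \tilde{\epsilon}$, yields
\[
B_{l_j}(\theta+\alpha)^{-1} S^V_{E'}(\theta) B_{l_j}(\theta) = A_{l_j} e^{\tilde{f}_{l_j}(\theta)} + s\, Q_j(\theta),
\]
with $Q_j(\theta) = B_{l_j}(\theta+\alpha)^{-1} \operatorname{diag}(1,0)\, B_{l_j}(\theta)$ of norm $\|Q_j\|_{\tilde{k}} \leq \|B_{l_j}\|_{\tilde{k}}^2$. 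Because conjugation shifts the fibered rotation number only by the constant $\langle \deg B_{l_j}, \alpha\rangle/2$ that is independent of $E'$, the variation of $\rho$ across $E'$ equals that of the conjugated cocycle.

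Given $\tilde{\epsilon}$, I would choose the scale $j = j(\tilde{\epsilon})$ to balance $\|\tilde{f}_{l_j}\|_{k_0}$ against $\tilde{\epsilon}\, \|B_{l_j}\|_{\tilde{k}}^2$, so that the combined perturbation of $A_{l_j}$ is of a definite order. When $A_{l_j}$ is bounded away from the parabolic locus, Lemma \ref{eig} (applied after diagonalising $A_{l_j}$ to a rotation $R_{\rho_j}$ via a conjugation of norm controlled by $|\rho_j|^{-1/2}$) provides a Lipschitz bound on the rotation number in the perturbation size, already stronger than $\tilde{\epsilon}^{1/2}$. In particular, on the interior of the spectrum away from gap edges, this argument gives a Lipschitz-in-$\tilde{\epsilon}$ bound on $\rho$.

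The main obstacle is the near-parabolic regime of $A_{l_j}$, which occurs precisely when $E$ is close to a gap edge, and it is exactly in this regime that the $\tfrac12$-Hölder exponent is saturated. Here I would use the explicit form $A_{l_j} = M^{-1} \exp 2\pi\bigl(\begin{smallmatrix} it_{l_j} & \nu_{l_j}\\ \overline{\nu_{l_j}} & -it_{l_j}\end{smallmatrix}\bigr) M$ provided by Theorem \ref{ar}(C), and recast the conjugated cocycle at energy $E+s$ as $\exp(b_0 - s\, b_1) + s^2 R_j$ in the style of the Moser--Pöschel Lemma \ref{kam}. The discriminant $\det(b_0 - s\, b_1)$ is linear in $s$ with nonzero leading coefficient, so the eigenvalues $\pm\sqrt{\det(b_0-s\,b_1)}$ give a square-root dependence of the rotation number on $s$, hence on $\tilde{\epsilon}$. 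Absorbing the polynomial prefactors in $l_j$ into $C_0 = C_0(\gamma, \tau, d)$ yields the claimed $\tfrac12$-Hölder bound.
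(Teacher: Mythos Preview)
The paper does not prove this lemma; it is quoted from \cite{Cai2} and used as a black box in the proof of Theorem~1.2, so there is no in-paper argument to compare against.

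On its own merits, your outline has the right architecture---conjugate $(\alpha,S^V_E)$ close to a constant via Theorem~\ref{ar}, pick the scale $j=j(\tilde\epsilon)$ to balance $\|\tilde f_{l_j}\|$ against $\tilde\epsilon\,\|B_{l_j}\|^2$, and read off the variation of $\rho$---but the near-parabolic case is a genuine gap. You invoke ``the style of the Moser--P\"oschel Lemma~\ref{kam}'' to write the conjugated cocycle as $\exp(b_0-s\,b_1)+s^2R_j$, yet Lemma~\ref{kam} is set up for an \emph{exact} conjugation $X$ to a constant parabolic $B=\begin{pmatrix}1&\zeta\\0&1\end{pmatrix}$; the matrices $b_0,b_1$ in \eqref{b} are built from the entries of that $X$. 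In the almost-reducible situation you still carry the error $\tilde f_{l_j}$ and there is no conjugation with the structure needed to define $b_1$, nor any reason for the linear coefficient of $\det(b_0-s\,b_1)$ (which in Lemma~\ref{kam} is $[X_{11}^2]\zeta$ and relies on Lemma~\ref{poly}) to be nonzero. Your elliptic case also needs more care: after diagonalising $A_{l_j}$ the Lipschitz constant is of order $\|B_{l_j}\|^2|\rho_j|^{-1}$, and you have not shown this is compatible with the $\tilde\epsilon^{1/2}$ target for the chosen $j$.

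The argument in \cite{Cai2} (following the analytic prototypes \cite{Am,AJ2}) sidesteps the elliptic/parabolic dichotomy altogether: almost reducibility yields, uniformly in $E$, polynomial bounds on the conjugations $\|B_{l_j}\|_0\le\varepsilon_{l_j}^{-2\sigma/5}$, and hence on the iterates $\|(S^V_E)_n\|_0$ over length scales $n$ comparable to a power of $\varepsilon_{l_j}^{-1}$. A standard estimate then converts such uniform transfer-matrix bounds directly into $\tfrac12$-H\"older continuity of $\rho$, with no case analysis on the type of $A_{l_j}$.
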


	Let $E=\frac{1}{2}(E_{m'}^{-}+E_{m}^{+})$ and $\tilde{\epsilon} = \frac{1}{2}(E_{m'}^{-}-E_{m}^{+})$, by Lemma \ref{holder} we have
\begin{equation}
|N(E_{m'}^{-})-N(E_{m}^{+})|\leq C_{0}(E_{m'}^{-}-E_{m}^{+})^{\frac{1}{2}}, \label{m1}
\end{equation}
where $C_{0}$ is independent of $E$. Since $\alpha \in {\rm DC}_{d}(\gamma,\tau)$,  according to (\ref{ids}), we also have
\begin{equation}
|N(E_{m'}^{-})-N(E_{m}^{+})|\geq \|\langle m'-m,\alpha \rangle\|_{\mathbb{T}^{d}} \geq \frac{\gamma}{|m'-m|^{\tau}}.	\label{m2}
\end{equation}
Then by (\ref{m1}) and (\ref{m2}) we conclude that
\begin{equation}
{\rm dist}(G_{m}(V),G_{m'}(V)) \geq (\frac{\gamma}{C_{0}})^{2}\cdot |m'-m|^{-2\tau}, \ \ \forall \ m'\neq m \in \mathbb{Z}^{d}. \label{dist}
\end{equation}
One can use the same way to show that
\begin{equation}
|E_{m}^{-}-\underline{E}| \geq (\frac{\gamma}{C_{0}})^{2}\cdot |m|^{-2\tau}, \ \  |E_{m}^{+}-\overline{E}| \geq (\frac{\gamma}{C_{0}})^{2}\cdot |m|^{-2\tau}  \label{a}.
\end{equation}

Given any $E\in \Sigma_{V,\alpha}$ and any $\epsilon>0$, we define
\begin{equation*}
\mathcal{M}=\mathcal{M}(E,\epsilon):=\{m\in \mathbb{Z}^{d}\backslash \{0\}:G_{m}(V)\cap(E-\epsilon,E+\epsilon)\neq \emptyset\},
\end{equation*}
and let $m_{0} \in \mathcal{M} $ be such that $|m_{0}| = \min_{m\in \mathcal{M}}|m|$.
Since $E\in \Sigma_{V,\alpha}$, it is obvious that
\begin{equation}
\begin{split}
&|G_{m_{0}}(V)\cap (E-\epsilon,E+\epsilon)|\leq \epsilon,\\
&|(-\infty,\underline{E})\cap (E-\epsilon,E+\epsilon)|\leq \epsilon,\\
&|(\overline{E},+\infty)\cap (E-\epsilon,E+\epsilon)|\leq \epsilon.
\end{split} \label{cases}
\end{equation}

{\bf Case 1.} $G_{0}(V)\cap(E-\epsilon,E+\epsilon)=\emptyset$.
By the definition of $\mathcal{M}$, we have
$
{\rm dist}(G_{m}(V),G_{m_{0}}(V))\leq 2\epsilon, \ \ \forall \ m\in \mathcal{M}$.
On the other hand, by (\ref{dist}),
\begin{equation*}
{\rm dist}(G_{m}(V),G_{m_{0}}(V))\geq (\frac{\gamma}{C_{0}})^{2}|2m|^{-2\tau}, \ \ \ \forall \ m \in \mathcal{M}\backslash\{m_{0}\}.
\end{equation*}
Therefore, we get $|m|\geq C_{1}\epsilon^{-\frac{1}{2\tau}}$ with $C_{1} = C_{1}(\gamma,\tau,C_{0})$. From Theorem \ref{main}, we have $|G_{m}(V)| \leq \varepsilon^{\frac{1}{4}} |m|^{-\frac{k}{9}}$, hence by direct calculation
	\begin{equation*}
\begin{split}
\sum_{m\in \mathcal{M}\backslash\{m_{0}\}}|G_{m}(V)\cap(E-\epsilon,E+\epsilon)|&\leq \sum_{m\in \mathcal{M}\backslash\{m_{0}\}} E_{m}^{+}-E_{m}^{-} \\
&\leq  \sum_{|m|\geq C_{1}\epsilon^{-\frac{1}{2\tau}}}\varepsilon^{\frac{1}{4}}|m|^{-\frac{k}{9}} \\
&\leq \frac{\epsilon}{4},
\end{split}
\end{equation*}
provided $\epsilon \leq \epsilon_{1}$ with $\epsilon_{1}=(\frac{1}{8}C_{1}^{\frac{k}{9}}\varepsilon^{-\frac{1}{4}})^{\frac{18\tau}{k-18\tau}}$. Combine with (\ref{cases}), we deduce that
\begin{equation*}
\begin{split}
& \ \ \ \ |(E-\epsilon,E+\epsilon)\cap \Sigma_{V,\alpha}|\\
&\geq 2\epsilon-|G_{m_{0}}(V)\cap(E-\epsilon,E+\epsilon)|-\sum_{m\in \mathcal{M}\backslash\{m_{0}\}}|G_{m}(V)\cap(E-\epsilon,E+\epsilon)|\\
&\geq\frac{3}{4}\epsilon, \ \ \ \forall \ 0 < \epsilon\leq \epsilon_{1}.
\end{split}
\end{equation*}

{\bf Case 2.} $(-\infty,\underline{E})\cap(E-\epsilon,E+\epsilon)\neq\emptyset$. For any $m\in \mathcal{M}$, we have
$|E_{m}^{-}-\underline{E}|\leq 2\epsilon$,
then combine with (\ref{a}), one can get
$|m|\geq 2C_{1}(\gamma,\tau,C_{0})\epsilon^{-\frac{1}{2\tau}}$. Just as Case 1, we may conclude that
\begin{equation*}
\sum_{m\in \mathcal{M}}|G_{m}(V)\cap(E-\epsilon,E+\epsilon)| \leq \sum_{|m|\geq 2C_{1}\epsilon^{-\frac{1}{2\tau}}}\varepsilon^{\frac{1}{4}}|m|^{-\frac{k}{9}}\leq \frac{\epsilon}{4}.
\end{equation*}
provided $\epsilon \leq \epsilon_{2}$ with $\epsilon_{2}=(2^{\frac{k}{9}-3}C_{1}^{\frac{k}{9}}\varepsilon^{-\frac{1}{4}})^{\frac{18\tau}{k-18\tau}}$. So we have
\begin{equation*}
\begin{split}
& \ \ \ \ |(E-\epsilon,E+\epsilon)\cap \Sigma_{V,\alpha}|\\	
&\geq 2\epsilon-|(-\infty,\underline{E})\cap(E-\epsilon,E+\epsilon)|-\sum_{m\in \mathcal{M}}|G_{m}(V)\cap(E-\epsilon,E+\epsilon)|\\
&\geq\frac{3}{4}\epsilon, \ \ \ \forall \ 0 < \epsilon\leq \epsilon_{2}.
\end{split}
\end{equation*}

{\bf Case 3.}
$(\overline{E},+\infty,)\cap(E-\epsilon,E+\epsilon)\neq\emptyset$. The proof is similar to the {\bf Case 2}, one can choose $\epsilon_{3}=\epsilon_{2}$ to finish the proof.
	
Finally, let $\epsilon_{0}= \min\{\epsilon_{1},\epsilon_{2},\epsilon_{3}\}$, we get
\begin{equation*}
	|(E-\epsilon,E+\epsilon)\cap \Sigma_{V,\alpha}| \geq \frac{3}{4}\epsilon, \ \ \ \forall \ E\in \Sigma_{V,\alpha}, \ \  \forall \ 0 < \epsilon\leq \epsilon_{0}.
	\end{equation*}
	 As for the case $\epsilon\in (\epsilon_{0},{\rm diam}{\Sigma_{V,\alpha}})$, we have
	 \begin{equation*}
	 |(E-\epsilon,E+\epsilon)\cap {\Sigma_{V,\alpha}}| \geq \frac{3}{4} \epsilon_{0} \geq \frac{3\epsilon_{0}}{4\  {\rm diam}{\Sigma_{V,\alpha}}} \times \epsilon.
	 \end{equation*}
	Choose $\mu = \min \{\frac{3}{4},\frac{3\epsilon_{0}}{4\  {\rm diam}{\Sigma_{V,\alpha}}}\}$ and this concludes the proof.
\end{proof}

\section*{Acknowledgments}
The authors want to thank Jiangong You and Qi Zhou for useful discussions. Ao Cai would also like to thank Pedro Duarte for his consistent support at University of Lisbon. This work is supported by PTDC/MAT-PUR/29126/2017, Nankai Zhide Fundation and NSFC grant (11671192).


\begin{thebibliography}{99}
	\bibitem{Av2}Avila, A., Bochi, J., Damanik, D.: {\it  Cantor spectrum for Schr\"{o}dinger operators with potentials arising from generalized skew-shifts}, Duke. Math. J., \textbf{146}, 253-280 (2009).
	
	\bibitem{Av1}Avila, A., Jitomirskaya, S.: {\it The ten Martini problem}, Ann. Math., \textbf{170}, 303-342 (2009).

	\bibitem{AJ2}Avila, A., Jitomirskaya, S.: {\it Almost localization and almost reducibility}, J. Eur. Math. Soc., \textbf{12}, 93-131 (2010).

	\bibitem{Av2}Avila, A., You, J., Zhou, Q.: {\it Dry ten Martini problem in the non-critical case}, preprint.
	
	\bibitem{Av3}Avila, A., Last, Y., Shamis, M., Zhou, Q.: {\it On the abominable properties of the almost Mathieu operator with well approximated frequencies}, preprint.

	\bibitem {AS}Avron, J., Simon, B.: {\it Almost periodic Schr\"{o}dinger operators II, the integrated density of states}, Duke. Math. J., \textbf{506}, 369-390 (1983).

	\bibitem{BS} Bellissard, J., Simon, B.: {\it Cantor spectrum for the almost Mathieu equation}, J. Funct. Anal., \textbf{48}, 408-419 (1982).

	\bibitem{Bin}Binder, I., Damanik, D., Goldstein, M., Lukic, M.: {\it Almost periodicity in time of solutions of the KdV equation}, arXiv: 1509.07373.
	
	\bibitem{Cai}Cai, A., Ge, L.: {\it Reducibility of finitely differentiable quasi-periodic cocycles and its spectral applications}, arXiv:1712.09041.

	\bibitem{Cai2} Cai, A., Chavaudret, C., You, J., Zhou, Q.: {\it Sharp H\"{o}lder continuity of the Lyapunov exponent of finitely differentiable quasi-periodic cocycles}, Math. Z., \textbf{291}, 931-958 (2019).
		
	\bibitem{Carl}Carleson, L.: {\it On $H^{\infty}$ in multiply connected domains}, in {\it Conference on harmonic analysis in honor of Antoni Zygmund, Vol. $\Rmnum{1}$, \textbf{\Rmnum{2}} (Chicago, Ill., 1981)}, Wadsworth Math. Ser., 349-372. Wadsworth, Belmont, CA, (1983).

	\bibitem{Ds}Damanik, D.: {\it Schr\"{o}dinger operators with dynamically defined potentials}, Ergod. Th. \& Dynam. Sys.,  \textbf{37}, 1681-1764 (2017).
	
	\bibitem{Dam}Damanik, D., Goldstein, M.: {\it On the inverse spectral problem for the quasi-periodic Schr\"{o}dinger
		equation}, Publ. Math. Inst. Hautes \'{E}tudes Sci., \textbf{119}, 217-401 (2014).

	\bibitem{Dam2} Damanik, D., Goldstein, M., Lukic, M.: {\it  The spectrum of a Schr\"{o}dinger operator with small quasi-periodic potential is homogeneous}, J. Spectr. Theory, \textbf{6}, 415-427 (2016).

	\bibitem{Dam3}Damanik, D., Goldstein, M., Lukic, M.: {\it The isospectral torus of quasi-periodic Schr\"{o}dinger operators via periodic approximations}, Invent. Math., \textbf{207}, 895-980 (2017).

	\bibitem{DKL}Damanik, D., Killip, R., Lenz, D.: {\it Uniform spectral properties of one-dimensional quasicrystals. iii, $\alpha$-continuity}, Commun. Math. Phys., \textbf{212}, 191-204 (2000).
	
	\bibitem{De} Delyon, F., Souillard, B.: {\it The rotation number for finite difference operators and its properties}, Commun. Math. Phys., \textbf{89}, 415-426 (1983).
	
	\bibitem{Eli}Eliasson, H.: {\it Floquet solutions for the 1-dimensional quasi-periodic Schr\"{o}dinger equation}, Commun. Math. Phys., \textbf{146}, 447-482 (1992).
		
	\bibitem{Gold}Goldstein, M., Schlag, W., Voda, M.: {\it On the spectrum of multi-frequency quasiperiodic Schr\"{o}dinger operators with large coupling}, arXiv:1708.09711.
	
	\bibitem{Am}Hadj Amor, S.: {\it H\"{o}lder continuity of the rotation number for quasi-periodic cocycles in ${\rm SL}(2,\mathbb{R})$}, Commun. Math. Phys., \textbf{187}, 565-588 (2009).


    \bibitem{Hofs}Hofstadter, D.; {\it Energy levels and wave functions of Bloch electrons in rational and irrational magnetic Telds}, Phys. Rev. B, \textbf{14}, 2239-2249 (1976).


	\bibitem{Hou}Hou, X., You, J.: {\it Almost reducibility and non-perturbative reducibility of quasi-periodic linear
	systems}, Invent. Math., \textbf{190}, 209-260 (2012).

    \bibitem{JianShi}Jian, W., Shi, Y.: {\it Sharp H\"{o}lder continuity of the integrated density of states for the extended Harper's model with a Liouville frequency}, Acta. Math. Sci., \textbf{39}, 1240-1254 (2019).

	\bibitem{Jito}Jitomirskaya, S.: {\it Almost everything about the almost Mathieu operator, II},  ``Proceedings of XI International Congress of Mathematical Physics", Int. Press, 373-382 (1995).

    \bibitem{Jo}Johnson, R.: {\it Exponential dichotomy, rotation number,  and linear differential operators with bounded coefficients}, J. Differ. Equations, \textbf{61}, 54-78 (1986).

	\bibitem{JM}Johnson, R., Moser, J.: {\it The rotation number for almost periodic potentials}, Commun. Math. Phys., \textbf{84}, 403-438 (1982).
	
	\bibitem{Le}Leguil, M., You, J., Zhao, Z., Zhou, Q.: {\it Asymptotics of spectral gaps of quasi-periodic Schr\"{o}dinger operators}, arXiv:1712.04700.
	
	\bibitem{LiuYuan}Liu, W., Yuan, X.: {\it Spectral gaps of almost Mathieu operators in the exponential regime}, J. Fractal Geom., \textbf{2}, 1-51 (2015).
	
	\bibitem{LiuShi}Liu, W., Shi, Y.: {\it Upper bounds on the spectral gaps of quasi-periodic Schr\"{o}dinger operators with Liouville frequencies}, arXiv:1708.01760.
	
	\bibitem{Mos}Moser, J., P\"{o}schel, J.: {\it An extension of a result by Dinaburg and Sinai on quasi-periodic potentials}, Commun. Math. Helv., \textbf{59}, 39-85 (1984).

	\bibitem{Puig}Puig, J.: {\it Cantor spectrum for the almost Mathieu operator}, Commun. Math. Phys., \textbf{244}, 297-309 (2004).
	
	\bibitem{Puig2}Puig, J.: {\it A nonperturbative Eliasson’s reducibility theorem}, Nonlinearity, \textbf{19}, 355-376 (2006).
	
	\bibitem{ShiYuan}Shi, Y., Yuan, X.: {\it Exponential decay of the lengths of the spectral gaps for the Extended Harper’s Model with a Liouvillean frequency}, J. Dyn. Diff. Equat., \textbf{31}, 1921-1953 (2019).
	
	\bibitem{Si}Simon, B.: {\it Almost periodic Schr\"{o}dinger operators: A review}, Adv. Appl. Math., \textbf{3}, 463-490 (1982).

	\bibitem{So1}Sodin, M., Yuditskii, P.: {\it Almost periodic Sturm-Liouville operators with Cantor homogeneous spectrum}, Comment. Math. Helv., \textbf{70}, 639-658 (1995).
	
	\bibitem{So2}Sodin, M., Yuditskii, P.: {\it Almost periodic Jacobi matrices with homogeneous spectrum, infinite-dimensional Jacobi inversion, and Hardy spaces of character-automorphic functions}, J. Geom. Anal., \textbf{7}, 387-435 (1997).
	
	\bibitem{Th}Thouless, D., Kohmoto, M., Nightingale, M.,  Den Nijs, M.: {\it Quantised Hall conductance in a two dimensional periodic potential},  Phys. Rev. Lett., \textbf{49}, 405-408 (1982).

	
	\bibitem{Wang}Wang, Y., Zhang, Z.: {\it Cantor spectrum for a class of $C^2$ quasiperiodic Schr\"{o}dinger operators}, Int. Math. Res. Not., \textbf{2017}, 2300-2336 (2017).
	
	\bibitem{Xuxu}Xu, X., Zhao, X.: {\it Exponential upper bounds on the spectral gaps and homogeneous spectrum for the non-critical extended Harper's model}, Discrete \& Cont. Dyn. Syst., \textbf{40}, 4777-4800 (2020).
	
	\bibitem{Zen}Zehnder, E.: {\it Generalized implicit function theorems with application to some small divisor problems, \Rmnum{1}},  Commun. Pure. Math., \textbf{XXVIII}, 91-140 (1975).
\end{thebibliography}
\end{document}